\title
[Propagation of anisotropic Gabor singularities]
{Propagation of anisotropic Gabor singularities for Schr\"odinger type equations}
\author[M. Cappiello]{Marco Cappiello}
\address{Dipartimento di Matematica, Universit\`a di Torino, Via Carlo Alberto 10,
10123 Torino, Italy}
\email{marco.cappiello[AT]unito.it}
\author[L. Rodino]{Luigi Rodino}
\address{Dipartimento di Matematica, Universit\`a di Torino, Via Carlo Alberto 10,
10123 Torino, Italy}
\email{luigi.rodino[AT]unito.it}
\author[P. Wahlberg]{Patrik Wahlberg}
\address{Dipartimento di Scienze Matematiche, Politecnico di Torino, Corso Duca degli Abruzzi 24,
10129 Torino, Italy}
\email{patrik.wahlberg[AT]polito.it}
\numberwithin{equation}{section}          
\newtheorem{thm}{Theorem}
\numberwithin{thm}{section}
\newcommand{\rubrik}{}
\newtheorem{prop}[thm]{Proposition}
\newtheorem{cor}[thm]{Corollary}
\newtheorem{lem}[thm]{Lemma}
\theoremstyle{definition}
\newtheorem{defn}[thm]{Definition}
\newtheorem{example}[thm]{Example}
\theoremstyle{remark}
\newtheorem{rem}[thm]{Remark}              
\newcommand{\scal}[2]{\langle #1,#2\rangle}
\newcommand{\pdd}[2] {\partial_{#1} ^{#2}}
\newcommand{\ro}{\mathbf R}
\newcommand{\no}{\mathbf N}
\newcommand{\rr}[1]{\mathbf R^{#1}}
\newcommand{\sr}[1]{\mathbf S^{#1}}
\newcommand{\sro}[1]{\mathbf S}
\newcommand{\nn}[1]{\mathbf N^{#1}}
\newcommand{\dd}{\mathrm {d}}
\newcommand{\nm}[2]{\Vert #1\Vert _{#2}}
\newcommand{\ep}{\varepsilon}
\newcommand{\fy}{\varphi}
\newcommand{\cdo}{\, \cdot \, }
\newcommand{\supp}{\operatorname{supp}}
\newcommand{\wpr}{{\text{\footnotesize $\#$}}}
\newcommand{\eabs}[1]{\langle #1\rangle}
\newcommand{\GL}{\operatorname{GL}}
\newcommand{\charac}{\operatorname{char}}
\newcommand{\rB}{\operatorname{B}}
\newcommand{\WF}{\mathrm{WF}}
\newcommand{\WFg}{\mathrm{WF_{\rm g}}}
\newcommand{\WFgs}{\mathrm{WF_{g}^{\it \sigma}}}
\newcommand{\cS}{\mathscr{S}}
\newcommand{\cT}{\mathcal{T}}
\newcommand{\cD}{\mathscr{D}}
\newcommand{\cF}{\mathscr{F}}
\newcommand{\cK}{\mathscr{K}}
\newcommand{\J}{\mathcal{J}}
\newcommand{\wt}{\widetilde}
\newcommand{\wh}{\widehat}
\newcommand{\re}{{\rm Re}}
\newcommand{\im}{{\rm Im}}
\def\la{\langle}
\def\ra{\rangle}
\newcommand{\leqs}{\leqslant}
\newcommand{\geqs}{\geqslant}
\begin{document}

\begin{abstract}
We show results on propagation of anisotropic Gabor wave front sets
for solutions to a class of evolution equations of Schr\"odinger type. 
The Hamiltonian is assumed to have a real-valued principal symbol
with the anisotropic homogeneity $a(\lambda x, \lambda^\sigma \xi) = \lambda^{1+\sigma} a(x,\xi)$
for $\lambda > 0$ where $\sigma > 0$ is a rational anisotropy parameter.
We prove that the propagator is continuous on anisotropic Shubin--Sobolev spaces. 
The main result says that the propagation of the anisotropic Gabor wave front set follows the Hamilton flow 
of the principal symbol.
\end{abstract}

\keywords{Tempered distributions, global wave front sets, microlocal analysis, phase space, anisotropy, propagation of singularities, evolution equations}
\subjclass[2010]{46F05, 46F12, 35A27, 47G30, 35S05, 35A18, 81S30, 58J47, 47D06}

\maketitle

\section{Introduction}\label{sec:intro}

We prove results on propagation of anisotropic phase space singularities for the initial value Cauchy problem for evolution equations of the form 
\begin{equation}\label{eq:schrodeq0}
\left\{
\begin{array}{rl}
\partial_t u(t,x) + i a^w(x,D_x) u (t,x) & = 0, \quad x \in \rr d, \quad t \in [-T,T] \setminus \{ 0\} , \\
u(0,\cdot) & = u_0.
\end{array}
\right.
\end{equation}
Here $T > 0$, $a^w(x,D_x)$ is a Weyl pseudodifferential operator and $u_0 \in \cS'(\rr d)$ is a tempered distribution.  

The Hamiltonian $a^w(x,D_x)$ is assumed to have real-valued principal symbol $a_0$. 
Following the fundamental idea of H\"ormander we show that the singularities at time $t \in [-T,T]$
are the singularities of the initial datum $u_0$ transported by the Hamilton flow $\chi_t $ of the principal symbol $a_0$. 
The Hamilton flow $( x(t), \xi (t) ) = \chi_t(x,\xi)$ is the solution to Hamilton's equation with initial datum $(x,\xi) \in T^* \rr d \setminus \{ (0,0) \} $, that is the solution to the system of ordinary differential equations 
\begin{equation*}
\left\{
\begin{array}{l}
x' (t) = \nabla_{\xi} a_0 \left( x(t), \xi(t) \right), \\
\xi' (t) = -\nabla_{x} a_0 \left( x(t), \xi(t) \right), \\
x(0) = x, \\
\xi(0) = \xi. 
\end{array}
\right.
\end{equation*}

The concept of phase space singularities that we use is the anisotropic Gabor wave front set, 
which is determined by an anisotropy parameter $\sigma > 0$. 
For $u \in \cS'(\rr d)$ the anisotropic Gabor wave front set $\WFgs (u)$ is a $\sigma$-conic closed subset of $T^* \rr d \setminus 0$. 
A $\sigma$-conic subset of $T^* \rr d \setminus 0$ contains anisotropic phase space curves of the form
\begin{equation}\label{eq:anisotropiccurve}
\lambda \mapsto ( \lambda x, \lambda^\sigma \xi ) \in T^* \rr d \setminus 0, \quad \lambda > 0, 
\end{equation}
if one point of the curve belongs to the subset. 

The anisotropic Gabor wave front set $\WFgs (u)$ is defined by means of the short-time Fourier transform 
$V_\fy u (x, \xi) = \cF \left(  u \, \overline{\fy(\cdot-x) }\right) (\xi)$ where $\fy \in \cS(\rr d) \setminus \{ 0 \}$ 
is a window function. 
To wit $z_0 = (x_0,\xi_0) \in T^* \rr d \setminus 0$ satisfies $z_0 \notin \WFgs ( u )$ 
if there exists an open set $U \subseteq T^* \rr d$ such that $z_0 \in U$ and 
\begin{equation*}
\sup_{(x,\xi) \in U, \ \lambda > 0} \lambda^N |V_\fy u (\lambda x, \lambda^\sigma \xi)| < + \infty \quad \forall N \geqs 0. 
\end{equation*}

This means that the short-time Fourier transform, which a priori is polynomially upper bounded, 
decays superpolynomially along curves of the form \eqref{eq:anisotropiccurve} in a neighborhood of $z_0$. 
For $u \in \cS'(\rr d)$ we have $\WFgs (u) = \emptyset$ if and only if $u \in \cS(\rr d)$ so $\WFgs (u)$ measures globally singular behavior
in the sense of lack of smoothness or decay at infinity comprehensively. 

We impose the condition that the Hamiltonian $a^w(x,D)$ has a real-valued principal symbol $a_0$ which satisfies the anisotropic homogeneity 
\begin{equation}\label{eq:anisotropiccondition}
a_0 (\lambda x, \lambda^\sigma \xi) = \lambda^{1 + \sigma} a_0(x,\xi), \quad (x,\xi) \in T^* \rr d \setminus 0, \quad \lambda > 0. 
\end{equation}
This condition turns out to have several beneficial consequences for the problem we study.

First it implies that the Hamilton flow $\chi_t$ of $a_0$ commutes with the anisotropic scaling map
\begin{equation*}
T^* \rr d \setminus 0 \ni (x,\xi) \mapsto ( \lambda x, \lambda^\sigma \xi ) \in T^* \rr d \setminus 0
\end{equation*}
for each $\lambda > 0$. This is a natural requirement for propagation results of the form $\WFgs (\cK_t u_0) \subseteq \chi_t \WFgs (u_0)$,
where $\cK_t u_0 = e^{- i t a^w(x,D)} u_0$ denotes the solution operator (propagator) for \eqref{eq:schrodeq0}, that we aim for, 
since $\WFgs (u)$ is $\sigma$-conic for all $u \in \cS'(\rr d)$. 

Secondly if $\sigma > 0$ is rational then condition \eqref{eq:anisotropiccondition} on the principal symbol allows us to prove the main result of this paper, that is the
propagation of singularities
\begin{equation}\label{eq:propsing}
\WFgs (\cK_t u_0) = \chi_t ( \WFgs (u_0) ), \quad t \in [-T,T], \quad u_0 \in \cS'(\rr d), 
\end{equation}
where $T > 0$. 

The term ``principal symbol'' refers here to the pseudodifferential calculus of anisotropic Shubin symbols \cite{Chatzakou1,Rodino4,Wahlberg4}. 
The symbols exhibit anisotropic behavior on phase space according to the assumed estimates
\begin{equation*}
|\pdd x \alpha \pdd \xi \beta a(x,\xi)|
\lesssim ( 1 + |x| + |\xi|^{\frac1\sigma} )^{m - |\alpha| - \sigma |\beta|}, \quad (x,\xi) \in T^* \rr d, \quad \alpha, \beta \in \nn d, 
\end{equation*}
where again $\sigma > 0$ is a given anisotropy parameter, and $m \in \ro$ is the order. These symbol classes are denoted $G^{m,\sigma}$. 

In the main result Theorem \ref{thm:propagationWFs} we show \eqref{eq:propsing} under the following assumptions. 
Suppose $k,m \in \no \setminus 0$, $\sigma = \frac{k}{m}$, 
and let $a \in G^{1 + \sigma,\sigma}$, 
$a \sim \sum_{j = 0}^{\infty} a_j$, 
where 
$a_0 \in C^\infty(\rr {2d} \setminus 0)$ is real-valued and satisfies \eqref{eq:anisotropiccondition}, whereas the lower order terms satisfy
$a_j \in G^{(1+\sigma) (1-2 j ), \sigma}$ for $j \geqs 1$. 
An example of a symbol that satisfies the criteria is
\begin{equation*}
a(x,\xi) = c \psi(x,\xi) \left( |x|^{2k} + |\xi|^{2m} \right)^{\frac12 \left( \frac1k + \frac1m \right)} 
\end{equation*}
where $c \in \ro \setminus 0$ and $\psi$ is a smooth function vanishing in a small ball around the origin in $T^* \rr d$. 

We show that the solution operator is continuous 
on anisotropic Shubin--Sobolev spaces.
This is of independent interest but also a tool for 
the proof of Theorem \ref{thm:propagationWFs}. 
The proof of the main result is based on ideas from \cite{Hormander1}. 
More precisely our result is an anisotropic version of \cite[Theorem~4.2]{Nicola2}
which treats propagation of the (isotropic) Gabor wave front set when the principal symbol is real-valued
and homogeneous of order two on $T^* \rr d$. With $\sigma = 1$ our result implies a weaker form of \cite[Theorem~4.2]{Nicola2}. 

The proof ideas for Theorem \ref{thm:propagationWFs} and \cite[Theorem~4.2]{Nicola2}
are based on H\"ormander's proof of \cite[Theorem~23.1.4]{Hormander1}. 
This result concerns Hamiltonians with first order H\"ormander type symbols, 
the continuity concerns classical Sobolev spaces,
and the singularities are the classical smooth wave front set. 
The proof techniques rely on energy estimates, functional analysis and pseudodifferential calculus. 
Our proofs in this paper are worked out in detail as opposed to the rather brief arguments in \cite[Chapter~23.1]{Hormander1} and \cite{Nicola2}.

We also prove the propagation \eqref{eq:propsing} for a different type of Hamiltonian of the form $a^w(x,D) = p(D) + \la v,x \ra$
where $p \in C^\infty(\rr d)$ is a sum of polynomials of each variable in $\rr d$, with real coefficients, 
of order $m \geqs 2$,
$v \in \rr d$ is a vector each of whose coordinate is nonzero, and $\sigma = \frac1{m-1}$. 
Since this setup includes the Airy operator $\frac{\dd^2}{\dd x^2} - x$ when $d =1$ we say that the corresponding equation \eqref{eq:schrodeq0}
is of Airy--Schr\"odinger type. 
Using results from \cite{Wahlberg3} we also formulate a version of \eqref{eq:propsing} in the Gelfand--Shilov space functional framework and corresponding anisotropic wave front sets \cite{Rodino3}. 

Denoting by $P_m$ the principal part of $p$, we show \eqref{eq:propsing}
where $\chi_t$ is the Hamilton flow of $P_m(\xi)$. This generalizes a particular case of \cite[Theorem~5.1]{Wahlberg4} where $v = 0$. 
Since $P_m(\xi)$ does not depend on $x$, 
the Hamiltonian flow for $P_m$ is trivial in the sense that it is constant in time with respect to the dual coordinates as
$\chi_t (x, \xi) = (x + t \nabla P_{m} (\xi), \xi)$.  
This contrasts to the
Hamilton flow in the main result Theorem \ref{thm:propagationWFs} where 
both space and dual coordinates may depend on time. 
The techniques we use for Airy--Schr\"odinger equations are an explicit formula for the Schwartz kernel of the propagator and general results on propagation of singularities from \cite{Rodino3,Rodino4,Wahlberg3,Wahlberg4}. 

Our results in this paper fit in a project to investigate globally anisotropic pseudodifferential operators \cite{Cappiello2,Cappiello4,Chatzakou1,Martin1,Rodino3,Rodino4}
and propagation of global singularities for evolution equations
\cite{PravdaStarov1,Wahlberg3,Wahlberg4}. 
The techniques are inspired from those of pseudodifferential operators defined by symbols that are anisotropic in the dual variables
for fixed space coordinates. 
These ideas have been investigated e.g. in \cite{Lascar1,Parenti1,Garello1}. 

A major new feature of our main result Theorem \ref{thm:propagationWFs} as opposed to 
earlier propagation results \cite{Wahlberg4}, is that it admits Hamiltonians that give 
rise to flows that are non-trivial in the sense that the dynamics involve all phase space coordinates. 

Concerning the organization of the paper, 
Section \ref{sec:prelim} contains notations, background concepts and conventions, 
and Section \ref{sec:AnisoShubinCalculus} recalls material on anisotropic Shubin pseudodifferential calculus. 
Section \ref{sec:ShubinSobolev} is devoted to Shubin--Sobolev modulation spaces in the anisotropic context, 
a recollection of localization operators, and an inequality of sharp G\aa rding type which is essential. 
In Section \ref{sec:airyschrodinger} we deduce propagation results for Airy--Schr\"odinger equations. 
Section \ref{sec:hamiltonflow} treats Hamiltonians that are anisotropically homogeneous as in \eqref{eq:anisotropiccondition}
and their Hamilton flows, and in Section \ref{sec:solutionsaniso} we show existence and uniqueness of solutions to an inhomogeneous form of \eqref{eq:schrodeq0}
in anisotropic Shubin--Sobolev spaces for Hamiltonian symbols in $G^{1+\sigma,\sigma}$ with bounded imaginary part and $\sigma > 0$ rational. 
Then Section \ref{sec:propagation} is dedicated to the main result on propagation of singularities, and finally Section \ref{sec:examples} consists of a very short discussion of examples.

\section{Preliminaries}\label{sec:prelim}

The unit sphere in $\rr d$ is denoted $\sr {d-1} \subseteq \rr d$. 
An open ball of radius $r > 0$ centered in $x \in \rr d$ is denoted $\rB_r (x)$, and $\rB_r(0) = \rB_r$.  
The transpose of a matrix $A \in \rr {d \times d}$ is denoted $A^T$ and the inverse transpose 
of $A \in \GL(d,\ro)$ is $A^{-T}$. 
We write $f (x) \lesssim g (x)$ provided there exists $C>0$ such that $f (x) \leqs C \, g(x)$ for all $x$ in the domain of $f$ and of $g$. 
If $f (x) \lesssim g (x) \lesssim f(x)$ then we write $f \asymp g$. 
We use the partial derivative $D_j = - i \partial_j$, $1 \leqs j \leqs d$, acting on functions and distributions on $\rr d$, 
with extension to multi-indices. 
The bracket $\eabs{x} = (1 + |x|^2)^{\frac12}$ for $x \in \rr d$ satisfies
Peetre's inequality with optimal constant \cite[Lemma~2.1]{Rodino3}, that is
\begin{equation}\label{eq:Peetre}
\eabs{x+y}^s \leqs \left( \frac{2}{\sqrt{3}} \right)^{|s|} \eabs{x}^s\eabs{y}^{|s|}\qquad x,y \in \rr d, \quad s \in \ro. 
\end{equation}
We use the normalization of the Fourier transform
\begin{equation*}
 \cF f (\xi )= \widehat f(\xi ) = (2\pi )^{-\frac d2} \int _{\rr
{d}} f(x)e^{-i\scal  x\xi } \, \dd x, \qquad \xi \in \rr d, 
\end{equation*}
for $f\in \cS(\rr d)$ (the Schwartz space), where $\scal \cdo \cdo$ denotes the scalar product on $\rr d$. 
The conjugate linear action of a distribution $u$ on a test function $\phi$ is written $(u,\phi)$, consistent with the $L^2$ inner product $(\cdo ,\cdo ) = (\cdo ,\cdo )_{L^2}$ which is conjugate linear in the second argument. 

Denote translation by $T_x f(y) = f( y-x )$ and modulation by $M_\xi f(y) = e^{i \scal y \xi} f(y)$ 
for $x,y,\xi \in \rr d$ where $f$ is a function or distribution defined on $\rr d$. 
If $\fy \in \cS(\rr d) \setminus \{0\}$ then
the short-time Fourier transform (STFT) of a tempered distribution $u \in \cS'(\rr d)$ is defined by 
\begin{equation*}
V_\fy u (x,\xi) = (2\pi )^{-\frac d2} (u, M_\xi T_x \fy) = \cF (u T_x \overline \fy)(\xi), \quad x,\xi \in \rr d. 
\end{equation*}
The function $V_\fy u$ is smooth and polynomially bounded \cite[Theorem~11.2.3]{Grochenig1}, that is 
there exists $k \geqs 0$ such that 
\begin{equation}\label{eq:STFTtempered}
|V_\fy u (x,\xi)| \lesssim \eabs{(x,\xi)}^{k}, \quad (x,\xi) \in T^* \rr d.  
\end{equation}
We have $u \in \cS(\rr d)$ if and only if
\begin{equation}\label{eq:STFTschwartz}
|V_\fy u (x,\xi)| \lesssim \eabs{(x,\xi)}^{-N}, \quad (x,\xi) \in T^* \rr d, \quad \forall N \geqs 0.  
\end{equation}

The transform inverse to the STFT is given by
\begin{equation}\label{eq:STFTinverse}
u = (2\pi )^{-\frac d2} \iint_{\rr {2d}} V_\fy u (x,\xi) M_\xi T_x \fy \, \dd x \, \dd \xi
\end{equation}
provided $\| \fy \|_{L^2} = 1$, with action under the integral understood, that is 
\begin{equation}\label{eq:moyal}
(u, f) = (V_\fy u, V_\fy f)_{L^2(\rr {2d})} = (V_\fy^* V_\fy u, f)
\end{equation}
for $u \in \cS'(\rr d)$ and $f \in \cS(\rr d)$, cf. \cite[Theorem~11.2.5]{Grochenig1}. 

According to \cite[Corollary~11.2.6]{Grochenig1} 
the topology for $\cS (\rr d)$ can be defined by the collection of seminorms
\begin{equation}\label{eq:seminormsS}
\cS(\rr d) \ni \psi \mapsto \| \psi \|_m := \sup_{z \in \rr {2d}} \eabs{z}^m |V_\fy \psi (z)|, \quad m \in \no,
\end{equation}
for any $\fy \in \cS(\rr d) \setminus 0$. 

The Beurling type Gelfand--Shilov space $\Sigma_\nu^\mu (\rr d)$ is for $\nu,\mu,h > 0$ is defined as
the topological projective limit
\begin{equation*}
\Sigma _\nu^\mu (\rr d) = \bigcap _{h>0} \mathcal S_{\nu,h}^\mu(\rr d)
\end{equation*}
where $\mathcal S_{\nu,h}^\mu(\rr d)$ is the Banach space of smooth functions that have finite 
\begin{equation*}
\nm f{\mathcal S_{\nu,h}^\mu}\equiv \sup_{x \in \rr d, \, \alpha,\beta \in \nn d} \frac {|x^\alpha D^\beta
f(x)|}{h^{|\alpha + \beta |} \alpha !^\nu \, \beta !^\mu}
\end{equation*}
norm \cite{Gelfand2}.   
The space $\Sigma_\nu^\mu (\rr d)$ is a Fr\'echet space with respect to the seminorms $\nm \cdot {\mathcal S_{\nu,h}^\mu}$ for $h > 0$, 
and $\Sigma _\nu^\mu(\rr d)\neq \{ 0\}$ if and only if $\nu + \mu > 1$ \cite{Petersson1}. 

If $\nu + \mu > 1$ the topological dual of $\Sigma _\nu^\mu(\rr d)$ is the space of (Beurling type) Gelfand--Shilov ultradistributions \cite[Section~I.4.3]{Gelfand2}
\begin{equation*}
(\Sigma _\nu^\mu)'(\rr d) =\bigcup _{h>0} (\mathcal S_{\nu,h}^\mu)'(\rr d).
\end{equation*}
The space of ultradistributions $(\Sigma _\nu^\mu)'(\rr d)$ may be equipped with several possibly different topologies \cite{Wahlberg3}. 
In this paper we use exclusively the weak$^*$ topology. 

The Gelfand--Shilov (ultradistribution) spaces enjoy invariance properties, with respect to 
translation, dilation, tensorization, coordinate transformation and (partial) Fourier transformation.
The Fourier transform extends 
uniquely to homeomorphisms on $\mathscr S'(\rr d)$, from $(\Sigma _\nu^\mu)'(\rr d)$ to $(\Sigma _\mu^\nu)'(\rr d)$, and restricts to 
homeomorphisms on $\mathscr S(\rr d)$, 
from $\Sigma _\nu^\mu(\rr d)$ to $\Sigma _\mu^\nu(\rr d)$, and to a unitary operator on $L^2(\rr d)$.

\section{Anisotropic Shubin pseudodifferential calculus}\label{sec:AnisoShubinCalculus}

In this section we retrieve some essential facts from pseudodifferential calculus of anisotropic Shubin
symbols \cite{Rodino4,Wahlberg4}. 

Let $\sigma > 0$. We use the weight function on $(x,\xi) \in T^* \rr d$
\begin{equation}\label{eq:weightanisotrop}
\theta_\sigma(x,\xi) = 1 + |x| + |\xi|^{\frac1\sigma}. 
\end{equation}
For this weight we have the following inequality of Peetre type \cite{Rodino4}. 
If $s \in \ro$ then 
\begin{equation}\label{eq:peetreanisotropic}
\theta_\sigma( x+y, \xi + \eta)^s
\leqs C_{\sigma,s} \theta_\sigma( x, \xi)^{|s|} \theta_\sigma( y, \eta)^s, \quad x, y, \xi, \eta \in \rr d. 
\end{equation}
When $\sigma$ is rational, $\sigma = \frac{k}{m}$, $k,m \in \no \setminus 0$, an alternative weight is
\begin{equation}\label{eq:sobolevweight}
w_{k,m} (x, \xi) 
= \left( 1 + | x |^{2k} + | \xi |^{2 m} \right)^{\frac12}. 
\end{equation}
Note that 
\begin{equation}\label{eq:weightequivalence}
w_{k,m} \asymp \theta_{\sigma}^{k}. 
\end{equation}
The motivation for using $w_{k,m}$ instead of $\theta_{\sigma}^{k}$ is that the former is smooth as opposed to the latter. 

By \cite[Eq.~(3.4)]{Rodino4} we have for $\sigma > 0$
\begin{equation}\label{eq:sobolevweightestimate1}
\eabs{(x,\xi)}^{\min\left( 1, \frac1\sigma\right)} \lesssim \theta_\sigma (x,\xi) \lesssim \eabs{(x,\xi)}^{\max\left( 1, \frac1\sigma\right)}, \quad (x,\xi) \in T^* \rr d, 
\end{equation}
and for  $k,m \in \no \setminus 0$
\begin{equation}\label{eq:sobolevweightestimate2}
\eabs{(x,\xi)}^{\min(k,m)} \lesssim w_{k,m} (x,\xi) \lesssim \eabs{(x,\xi)}^{\max(k,m)}, \quad (x,\xi) \in T^* \rr d. 
\end{equation}

The anisotropic Shubin symbols are defined as follows. 

\begin{defn}\label{def:symbol}
Let $\sigma > 0$ be real and $m \in \ro$. 
The space of ($\sigma$-)anisotropic Shubin symbols $G^{m,\sigma}$ of order $m$ consists of functions $a \in C^\infty(\rr {2d})$ 
that satisfy the estimates
\begin{equation}\label{eq:symbolderivative1}
|\pdd x \alpha \pdd \xi \beta a(x,\xi)|
\lesssim ( 1 + |x| + |\xi|^{\frac1\sigma} )^{m - |\alpha| - \sigma |\beta|}, \quad (x,\xi) \in T^* \rr d, \quad \alpha, \beta \in \nn d. 
\end{equation}
\end{defn}

The space $G^{m,\sigma}$ is a Fr\'echet space with respect to the seminorms on $a \in G^{m,\sigma}$ indexed by $j \in \no$
\begin{equation*}
\| a \|_j = \max_{|\alpha + \beta| \leqs j}
\sup_{(x,\xi) \in \rr {2d} } \theta_\sigma(x,\xi)^{-m + |\alpha| + \sigma |\beta|} \left| \pdd x \alpha \pdd \xi \beta a(x,\xi ) \right|. 
\end{equation*}

If $\sigma = 1$ then $G^{m,\sigma}$ is the space of isotropic Shubin symbols with parameter $\rho = 1$ \cite{Nicola1,Shubin1}.
Recall that the isotropic Shubin symbol of order $m$ and parameter $0 \leqs \rho \leqs 1$, denoted $a\in G_\rho^m$, satisfies  
\begin{equation*}
|\partial_x^\alpha \partial_\xi^\beta a(x,\xi)| \lesssim \langle (x,\xi)\rangle^{m - \rho|\alpha + \beta|}, \quad (x,\xi) \in T^* \rr d, \quad \alpha, \beta \in \nn d.
\end{equation*}
We have $G^{m,\sigma} \subseteq G_\rho^{m_0}$,  
where $m_0 = \max(m, m/\sigma)$ and $\rho = \min(\sigma, 1/\sigma)$,  
and
\begin{equation}\label{eq:symbolintersection}
\bigcap_{m \in \ro} G^{m,\sigma} = \cS(\rr {2d}). 
\end{equation}

The following lemma is a tool for verification of membership in $G^{m,\sigma}$. 

\begin{lem}\label{lem:symbol}
If $m \in \ro$, $\sigma,r > 0$ and $a \in C^\infty(\rr {2d})$ satisfies
\begin{equation}\label{eq:anisotropbound1}
\left| \pdd x \alpha \pdd \xi \beta a (\lambda x, \lambda^\sigma \xi) \right|
\lesssim \lambda^{m-|\alpha| - \sigma |\beta|}, 
\quad (x,\xi) \in T^* \rr d, \quad |(x,\xi)| = r, 
\quad \lambda \geqs 1, \quad \alpha, \beta \in \nn d,  
\end{equation}
then $a \in G^{m,\sigma}$. 
\end{lem}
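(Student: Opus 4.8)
The plan is to deduce the global symbol estimate \eqref{eq:symbolderivative1} from the hypothesis \eqref{eq:anisotropbound1} by means of an anisotropic ``polar coordinate'' decomposition of $T^* \rr d$, and to dispose of a neighborhood of the origin by compactness.

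First I would set up the decomposition. Fix $r > 0$ and consider $(X,\Xi) \in \rr {2d}$ with $|(X,\Xi)| \geqs r$. The function $\lambda \mapsto |X|^2 \lambda^{-2} + |\Xi|^2 \lambda^{-2\sigma}$ is continuous and strictly decreasing on $(0,\infty)$, tends to $+\infty$ as $\lambda \to 0^+$ and to $0$ as $\lambda \to +\infty$, and at $\lambda = 1$ equals $|(X,\Xi)|^2 \geqs r^2$; hence there is a unique $\lambda = \lambda(X,\Xi) \geqs 1$ with
\[
|X|^2 \lambda^{-2} + |\Xi|^2 \lambda^{-2\sigma} = r^2 .
\]
Putting $x = \lambda^{-1} X$ and $\xi = \lambda^{-\sigma} \Xi$ we then have $|(x,\xi)| = r$ and $(X,\Xi) = (\lambda x, \lambda^\sigma \xi)$, so \eqref{eq:anisotropbound1} applies at $(x,\xi)$ with this $\lambda$.

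Second, I would show $\lambda(X,\Xi) \asymp \theta_\sigma(X,\Xi)$ on $\{ |(X,\Xi)| \geqs r \}$. For the upper bound, $|x| \leqs r$ and $|\xi| \leqs r$ give $|X| = \lambda |x| \leqs r \lambda$ and $|\Xi|^{1/\sigma} = \lambda |\xi|^{1/\sigma} \leqs r^{1/\sigma} \lambda$, so $\theta_\sigma(X,\Xi) \leqs (1 + r + r^{1/\sigma}) \lambda$ since $\lambda \geqs 1$. For the lower bound, $|x|^2 + |\xi|^2 = r^2$ forces $\max(|x|,|\xi|) \geqs r/\sqrt 2$: if $|x| \geqs r/\sqrt 2$ then $\theta_\sigma(X,\Xi) \geqs |X| = \lambda |x| \geqs (r/\sqrt 2)\,\lambda$, and if $|\xi| \geqs r/\sqrt 2$ then $\theta_\sigma(X,\Xi) \geqs |\Xi|^{1/\sigma} = \lambda |\xi|^{1/\sigma} \geqs (r/\sqrt 2)^{1/\sigma}\, \lambda$; either way $\theta_\sigma(X,\Xi) \gtrsim \lambda$. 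Combining with \eqref{eq:anisotropbound1}, for $|(X,\Xi)| \geqs r$ and any $\alpha,\beta \in \nn d$,
\[
\left| \pdd x \alpha \pdd \xi \beta a(X,\Xi) \right| \lesssim \lambda^{m - |\alpha| - \sigma |\beta|} \asymp \theta_\sigma(X,\Xi)^{m - |\alpha| - \sigma |\beta|},
\]
the last step because $\lambda \asymp \theta_\sigma(X,\Xi)$ and raising to a fixed real exponent preserves $\asymp$. This is exactly \eqref{eq:symbolderivative1} on $\{ |(X,\Xi)| \geqs r \}$.

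Finally, on the complementary compact set $\{ |(X,\Xi)| \leqs r \}$, since $a \in C^\infty(\rr {2d})$ every derivative $\pdd x \alpha \pdd \xi \beta a$ is bounded there, while $\theta_\sigma$ is bounded above and below by positive constants, so \eqref{eq:symbolderivative1} holds trivially; together with the previous paragraph this gives $a \in G^{m,\sigma}$. There is no serious obstacle here; the only point requiring a little care is the construction of the anisotropic polar coordinates and the verification that the radial parameter $\lambda$ is comparable to the weight $\theta_\sigma$, which is the heart of the argument.
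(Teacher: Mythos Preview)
Your proof is correct and follows essentially the same approach as the paper's: both use the anisotropic polar decomposition $(X,\Xi) = (\lambda x, \lambda^\sigma \xi)$ with $|(x,\xi)| = r$ and $\lambda \geqs 1$, establish $\lambda \asymp \theta_\sigma(X,\Xi)$, and handle the compact ball $\overline{\rB_r}$ separately by smoothness. The only differences are cosmetic: the paper cites an external reference for the existence and uniqueness of the polar decomposition while you prove it directly via monotonicity, and the paper derives the equivalence $\theta_\sigma(X,\Xi) = 1 + \lambda(|x| + |\xi|^{1/\sigma}) \asymp 1 + \lambda$ in one line (since $|x| + |\xi|^{1/\sigma}$ is bounded away from $0$ and $\infty$ on the sphere of radius $r$) whereas you argue via $\max(|x|,|\xi|) \geqs r/\sqrt 2$.
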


\begin{proof}
Let $(y,\eta) \in \rr {2d} \setminus \rB_{r}$.
By \cite[Section~3]{Rodino4} $(y,\eta) = (\lambda x, \lambda^\sigma \xi)$ for a unique $(x,\xi) \in \rr {2d}$ such that $|(x,\xi)| = r$ and 
$\lambda \geqs 1$.
Combining
\begin{equation*}
1 + |y| + |\eta|^{\frac1\sigma} = 1 + \lambda ( |x| + |\xi|^{\frac1\sigma} ) \asymp 1+ \lambda
\end{equation*}
with \eqref{eq:anisotropbound1} we obtain for any $\alpha, \beta \in \nn d$
\begin{equation*}
\left| \pdd y \alpha \pdd \eta \beta a (y, \eta) \right|
\lesssim (1+\lambda)^{m -|\alpha| - \sigma |\beta|} 
\lesssim ( 1 + |y| + |\eta|^{\frac1\sigma} )^{m -|\alpha| - \sigma |\beta|}. 
\end{equation*}
The same estimate is trivial for $(y,\eta) \in \rB_{r}$ so
referring to \eqref{eq:symbolderivative1} we may conclude that $a \in G^{m,\sigma}$. 
\end{proof}

\begin{cor}\label{cor:homogeneoussymbol}
If $\sigma > 0$, $m \geqs 0$ and $a \in C^\infty(\rr {2d})$ is anisotropically homogeneous as 
\begin{equation}\label{eq:anisohomsymbol}
a( \lambda x, \lambda^\sigma \xi) = \lambda^{m} a(x,\xi), \quad (x,\xi) \in T^* \rr d, \quad \lambda > 0,  
\end{equation}
then $a \in G^{m, \sigma}$. 
\end{cor}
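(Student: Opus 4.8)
The plan is to deduce Corollary~\ref{cor:homogeneoussymbol} directly from Lemma~\ref{lem:symbol} by checking that anisotropic homogeneity of $a$ forces the hypothesis \eqref{eq:anisotropbound1} on every derivative. The key observation is that differentiating the homogeneity identity \eqref{eq:anisohomsymbol} produces a homogeneity relation for each $\pdd x \alpha \pdd \xi \beta a$, but with a shifted exponent reflecting the anisotropic weights of the differentiations.

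First I would differentiate \eqref{eq:anisohomsymbol} with respect to the variables $x$ and $\xi$. Applying $\partial_{x_j}$ to both sides of $a(\lambda x, \lambda^\sigma \xi) = \lambda^m a(x,\xi)$ and using the chain rule on the left gives $\lambda (\partial_{x_j} a)(\lambda x, \lambda^\sigma \xi) = \lambda^m (\partial_{x_j} a)(x,\xi)$, hence $(\partial_{x_j} a)(\lambda x, \lambda^\sigma \xi) = \lambda^{m-1} (\partial_{x_j} a)(x,\xi)$. Similarly $\partial_{\xi_j}$ brings down a factor $\lambda^\sigma$, so $(\partial_{\xi_j} a)(\lambda x, \lambda^\sigma \xi) = \lambda^{m-\sigma} (\partial_{\xi_j} a)(x,\xi)$. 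Iterating, for any multi-indices $\alpha, \beta \in \nn d$ one obtains

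\begin{equation*}
\pdd x \alpha \pdd \xi \beta a (\lambda x, \lambda^\sigma \xi) = \lambda^{m - |\alpha| - \sigma |\beta|} \, \pdd x \alpha \pdd \xi \beta a (x,\xi), \quad \lambda > 0, \quad (x,\xi) \in T^* \rr d.
\end{equation*}

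Next I would fix any $r > 0$ and restrict to the sphere $|(x,\xi)| = r$, which is compact and contained in $\rr {2d} \setminus 0$ where $a \in C^\infty$. Therefore each $\pdd x \alpha \pdd \xi \beta a$ is continuous on $\{ |(x,\xi)| = r\}$ and hence bounded there, say by a constant $C_{\alpha,\beta,r}$. Combining this with the homogeneity identity above yields, for $|(x,\xi)| = r$ and $\lambda \geqs 1$,

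\begin{equation*}
\left| \pdd x \alpha \pdd \xi \beta a (\lambda x, \lambda^\sigma \xi) \right| = \lambda^{m - |\alpha| - \sigma |\beta|} \left| \pdd x \alpha \pdd \xi \beta a (x,\xi) \right| \leqs C_{\alpha,\beta,r} \, \lambda^{m - |\alpha| - \sigma |\beta|},
\end{equation*}

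which is precisely \eqref{eq:anisotropbound1}. An application of Lemma~\ref{lem:symbol} then gives $a \in G^{m,\sigma}$. There is essentially no obstacle here; the only points requiring a word of care are that the assumption $m \geqs 0$ is not actually needed for this argument (it is presumably retained for consistency with how the corollary is used), and that the hypothesis $a \in C^\infty(\rr {2d} \setminus 0)$ together with the homogeneity already forces suitable behaviour near the origin, but since Lemma~\ref{lem:symbol} only tests on a single sphere and extends the bound outward, no delicate estimate near $0$ is required.
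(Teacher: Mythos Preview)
Your proof is correct and follows exactly the intended route: the paper states this as a corollary of Lemma~\ref{lem:symbol} without spelling out a proof, and the verification you give---differentiating the homogeneity relation to obtain $\pdd x \alpha \pdd \xi \beta a(\lambda x,\lambda^\sigma\xi)=\lambda^{m-|\alpha|-\sigma|\beta|}\pdd x \alpha \pdd \xi \beta a(x,\xi)$, bounding on a sphere, and invoking the lemma---is precisely what that deduction amounts to. One small slip in your final paragraph: the hypothesis is $a\in C^\infty(\rr{2d})$, not $C^\infty(\rr{2d}\setminus 0)$, and this is what Lemma~\ref{lem:symbol} requires; but since your argument only uses smoothness away from the origin anyway, and the lemma's proof handles the ball $\rB_r$ trivially, this does not affect anything.
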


For $a \in G^{m,\sigma}$ and $\tau \in \ro$ a pseudodifferential operator in the $\tau$-quantization is defined by
\begin{equation}\label{eq:tquantization}
a_\tau(x,D) f(x)
= (2\pi)^{-d}  \int_{\rr {2d}} e^{i \langle x-y, \xi \rangle} a ( (1-\tau) x + \tau y,\xi ) \, f(y) \, \dd y \, \dd \xi, \quad f \in \cS(\rr d),
\end{equation}
when $m < - d \sigma$. The definition extends to $m \in \ro$ if the integral is viewed as an oscillatory integral.
If $\tau = 0$ we get the Kohn--Nirenberg quantization $a_0(x,D) = a(x,D)$ and if $\tau = \frac12$ we have the Weyl quantization $a_{1/2}(x,D) = a^w(x,D)$. 
The Weyl quantization enjoys a simple formal adjoint relation: $a^w(x,D)^* = \overline{a}^w(x,D)$.  
We will use exclusively the Weyl quantization in this paper. 
By \cite[Proposition~3.3 (i)]{Rodino4} the symbol classes $G^{m,\sigma}$ are homeomorphically invariant under change of quantization parameter $\tau \in \ro$, for any $\sigma > 0$ and $m \in \ro$. 
If $a \in G^{m,\sigma}$
then the operator $a^w(x,D)$ acts continuously on $\cS(\rr d)$ and extends uniquely by duality to a continuous operator on $\cS'(\rr d)$
\cite{Rodino4,Shubin1}. 
If $a \in \cS'(\rr {2d})$ then $a^w(x,D)$ extends to a continuous operator $a^w(x,D): \cS(\rr d) \to \cS'(\rr d)$.
If $a \in \cS(\rr {2d})$ then $a^w(x,D)$ is regularizing, in the sense that it is continuous $a^w(x,D): \cS'(\rr d) \to \cS(\rr d)$ with $\cS'(\rr d)$ equipped with the strong topology \cite{Cappiello5}. 

If $a \in \cS'(\rr {2d})$ then 
\begin{equation}\label{eq:wignerweyl}
(a^w(x,D) f, g) = (2 \pi)^{-d} (a, W(g,f) ), \quad f, g \in \cS(\rr d), 
\end{equation}
where the cross-Wigner distribution \cite{Folland1,Grochenig1} is defined as 
\begin{equation*}
W(g,f) (x,\xi) = \int_{\rr d} g (x+y/2) \overline{f(x-y/2)} e^{- i \la y, \xi \ra} \dd y, \quad (x,\xi) \in \rr {2d}. 
\end{equation*}
If $f,g \in \cS(\rr d)$ then $W(g,f) \in \cS (\rr {2d})$. 

Given a sequence of symbols $a_j \in G^{m_j,\sigma}$, $j=1,2,\dots$, such that $m_j \to - \infty$ as $j \to \infty$
we write 
\begin{equation*}
a \sim \sum_{j = 1}^\infty a_j
\end{equation*}
provided that for any $n \geqs 2$
\begin{equation*}
a - \sum_{j = 1}^{n-1} a_j \in G^{\mu_n,\sigma}
\end{equation*}
where $\mu_n = \max_{j \geqs n} m_j$. 
By \cite[Lemma~3.2]{Rodino4} there exists a symbol
$a \in G^{m,\sigma}$ where $m = \max_{j \geqs 1} m_j$ such that $a \sim \sum_{j = 1}^\infty a_j$
under the stated circumstances. 
The symbol $a$ is unique modulo $\cS(\rr {2d})$. 

The bilinear Weyl product $a \wpr b$ of two symbols $a \in G^{m,\sigma}$ and $b \in G^{n,\sigma}$
is defined as the product of symbols corresponding to operator composition: 
$( a \wpr b)^w(x,D) = a^w(x,D) b^w (x,D)$. 
By \cite[Proposition~3.3 (ii)]{Rodino4} the Weyl product is continuous $\wpr: G^{m,\sigma} \times G^{n,\sigma} \to G^{m+n,\sigma}$.
The asymptotic expansion formula for the Weyl product \cite{Hormander1,Shubin1} is
\begin{equation}\label{eq:calculuscomposition1}
a \wpr b(x,\xi) \sim \sum_{\alpha, \beta \geqs 0} \frac{(-1)^{|\beta|}}{\alpha! \beta!} \ 2^{-|\alpha+\beta|}
D_x^\beta \pdd \xi \alpha a(x,\xi) \, D_x^\alpha \pdd \xi \beta b(x,\xi). 
\end{equation}
If $a \in G^{m,\sigma}$ and $b \in G^{n,\sigma}$
then each term in the sum belongs to $G^{m+n -(1+\sigma)|\alpha+\beta|,\sigma}$. 

For $\sigma > 0$ a $\sigma$-conic subset $\Gamma \subseteq T^* \rr d \setminus 0$  
is closed under the operation $T^* \rr d \setminus 0 \ni (x,\xi) \mapsto ( \lambda x, \lambda^\sigma \xi)$
for all $\lambda > 0$. 
By \cite[Definition~3.4 and Lemma~3.5]{Rodino4} (cf. also \cite[Remark~3.4]{Wahlberg4}) it is possible to construct $\sigma$-conic
open subsets of given points in $T^* \rr d \setminus 0$, and corresponding cutoff functions. 

A symbol $a \in G^{m,\sigma}$ is said to be non-characteristic at $z_0 \in T^* \rr d \setminus 0$, 
if 
\begin{equation}\label{eq:noncharacteristic}
|a( x, \xi )| \geqs C \theta_\sigma(x,\xi)^{m}, \quad (x,\xi) \in \Gamma, \quad |(x,\xi)| \geqs R
\end{equation}
for $C,R > 0$, where $\Gamma \subseteq T^* \rr d \setminus 0$ is an open $\sigma$-conic subset containing $z_0$. 
The complement in $T^* \rr d \setminus 0$ of the non-characteristic points is called the characteristic set $\charac_\sigma (a) \subseteq T^* \rr d \setminus 0$. 
It is a closed and $\sigma$-conic subset of $T^* \rr d \setminus 0$. 
This is a particular case of \cite[Definition~3.8]{Rodino4}. 

In most respects the anisotropic Shubin pseudodifferential calculus for the symbol classes $G^{m,\sigma}$ 
with $m \in \ro$ and $\sigma > 0$ works as the isotropic calculus in \cite{Nicola1,Shubin1}. 
In fact \cite[Section~3]{Rodino4} contains the basics of the anisotropic pseudodifferential calculus, 
and by \cite[Lemma~6.3]{Rodino4} and its proof it is possible to construct parametrices
for elliptic symbols. 
Thus if $\sigma > 0$ and $a \in G^{m,\sigma}$ is elliptic in the sense of $\charac_\sigma (a) = \emptyset$, 
that is, 
\begin{equation}\label{eq:ellipticity}
|a( x, \xi )| \geqs C \theta_\sigma(x,\xi)^{m}, \quad (x,\xi) \in \rr {2d} \setminus \rB_R, 
\end{equation}
for $C,R > 0$, then there exists an elliptic symbol $b \in G^{-m,\sigma}$ such that 
\begin{equation*}
a \wpr b = 1 + r_1, \quad b \wpr a = 1 + r_2, 
\end{equation*}
where $r_1, r_2 \in \cS(\rr {2d})$. 

The following definition concerns the anisotropic Gabor wave front set $\WFgs ( u ) \subseteq T^* \rr d \setminus 0$ of $u \in \cS'(\rr d)$ \cite[Definition~4.1]{Rodino4} 
which is important in this paper. 
It is a closed and $\sigma$-conic subset of $T^* \rr d \setminus 0$ well adapted to the anisotropic Shubin calculus. 

\begin{defn}\label{def:WFgs}
Suppose $u \in \cS'(\rr d)$, $\fy \in \cS(\rr d) \setminus 0$, and $\sigma > 0$. 
Then $z_0 = (x_0,\xi_0) \in T^* \rr d \setminus 0$ satisfies $z_0 \notin \WFgs ( u )$
if there exists an open set $U \subseteq T^* \rr d$ such that $z_0 \in U$ and 
\begin{equation}\label{eq:WFgs}
\sup_{(x,\xi) \in U, \ \lambda > 0} \lambda^N |V_\fy u (\lambda x, \lambda^\sigma \xi)| < + \infty \quad \forall N \geqs 0. 
\end{equation}
\end{defn}

If $\sigma = 1$ then $\WFgs ( u ) = \WFg(u)$ that denotes the usual Gabor wave front set \cite{Hormander2,Rodino2}, 
which is isotropic in phase space. The $\sigma$-conic sets are then ordinary cones in $T^* \rr d \setminus 0$, that is sets closed under multiplication with a positive parameter.   

The definition of $\WFgs ( u )$ does not depend on $\fy \in \cS(\rr d) \setminus 0$ \cite[Proposition~4.2]{Rodino4}, and 
 \cite[Proposition~4.3~(i)]{Rodino4} says that 
\begin{equation}\label{eq:WFgFourier}
\WFgs (\wh u) = \J \WF_{\rm g}^{\frac1\sigma} (u), \quad u \in \cS'(\rr d),   
\end{equation}
where 
\begin{equation}\label{eq:Jdef}
\J =
\left(
\begin{array}{cc}
0 & I_d \\
-I_d & 0
\end{array}
\right) \in \rr {2d \times 2d} 
\end{equation}
is the matrix that defines the symplectic group \cite{Folland1}. 

By \cite[Proposition~3.5]{Wahlberg4} we may express the anisotropic Gabor wave front set of $u \in \cS'(\rr d)$ as 
\begin{equation}\label{eq:WFcharacteristic}
\WFgs (u) = \bigcap_{a \in G^{m,\sigma}: \ a^w(x,D) u \in \cS} \charac_{\sigma} (a)
\end{equation}
for any $m \in \ro$. 
For $\sigma > 0$, $m \in \ro$, $a \in G^{m,\sigma}$ and $u \in \cS'(\rr d)$ we have the microlocal and microelliptic inclusions 
\begin{equation*}
\WFgs ( a^w(x,D) u ) \subseteq \WFgs (u) \subseteq \WFgs ( a^w(x,D) u ) \bigcup \charac_\sigma (a)
\end{equation*}
(cf. \cite[Proposition~5.1 and Theorem~6.4]{Rodino4} which are stated slightly more generally). 

At a few occasions we will use anisotropic wave front sets in the Gelfand--Shilov functional framework. 
The Gelfand--Shilov wave front set of $u \in (\Sigma_\nu^\mu)' (\rr d)$ with $\nu + \mu > 1$ is based on the following facts. 
If $\fy \in \Sigma_\nu^\mu (\rr d) \setminus 0$ then 
\begin{equation*}
| V_\fy u (x,\xi)| \lesssim e^{r (|x|^{\frac1\nu} + |\xi|^{\frac1\mu})}
\end{equation*}
for some $r > 0$, and $u \in \Sigma_\nu^\mu (\rr d)$ if and only if 
\begin{equation*}
| V_\fy u (x,\xi)| \lesssim e^{-r (|x|^{\frac1\nu} + |\xi|^{\frac1\mu})}
\end{equation*}
for all $r > 0$. See e.g. \cite[Theorems~2.4 and 2.5]{Toft1}.
The $\nu,\mu$-Gelfand--Shilov wave front set $\WF^{\nu,\mu} (u) \subseteq T^* \rr d \setminus 0$ is defined as follows. 

\begin{defn}\label{def:wavefrontGFst}
Let $\nu,\mu > 0$ satisfy $\nu + \mu > 1$, and suppose $\fy \in \Sigma_\nu^\mu(\rr d) \setminus 0$ and $u \in (\Sigma_\nu^\mu)'(\rr d)$. 
Then $(x_0,\xi_0) \in T^* \rr d \setminus 0$ satisfies $(x_0,\xi_0) \notin \WF^{\nu,\mu} (u)$ if there exists an open set $U \subseteq T^*\rr d \setminus 0$ containing $(x_0,\xi_0)$ such that 
\begin{equation*}
\sup_{\lambda > 0, \ (x,\xi) \in U} e^{r \lambda} |V_\fy u(\lambda^\nu x, \lambda^\mu \xi)| < \infty, \quad \forall r > 0. 
\end{equation*}
\end{defn}

The requested decay is thus exponential rather than superpolynomial as for $\WFgs$. 
The $\nu,\mu$-Gelfand--Shilov wave front set is a closed and $\mu/\nu$-conic subset of $T^* \rr d \setminus 0$ \cite{Rodino3}.

The next result identifies powers of the weight $w_{k,m}$ defined in \eqref{eq:sobolevweight} as anisotropic Shubin symbols. 

\begin{lem}\label{lem:sobolevsymbol}
Let $k,m \in \no \setminus 0$ and $\sigma = \frac{k}{m}$. If $n \in \ro$ then $w_{k,m}^n \in G^{ n k, \sigma}$. 
\end{lem}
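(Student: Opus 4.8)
The plan is to apply Lemma~\ref{lem:symbol} to the function $a = w_{k,m}^n$, which is smooth on $\rr {2d}$ since $w_{k,m}$ is smooth and everywhere $\geqs 1$. The target order is $\mu = nk$ and the anisotropy is $\sigma = k/m$, so I must verify the scaled derivative estimate \eqref{eq:anisotropbound1}, namely that for $|(x,\xi)| = r$ (any fixed $r > 0$), $\lambda \geqs 1$ and all $\alpha,\beta \in \nn d$,
\begin{equation*}
\left| \pdd x \alpha \pdd \xi \beta \bigl( w_{k,m}^n \bigr)(\lambda x, \lambda^\sigma \xi) \right| \lesssim \lambda^{nk - |\alpha| - \sigma|\beta|}.
\end{equation*}

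First I would record the exact anisotropic scaling of $w_{k,m}$ itself. Since $|\lambda x|^{2k} = \lambda^{2k}|x|^2{}^{k}$ — more precisely $|\lambda x|^{2k} = \lambda^{2k}|x|^{2k}$ and $|\lambda^\sigma \xi|^{2m} = \lambda^{2\sigma m}|\xi|^{2m} = \lambda^{2k}|\xi|^{2m}$ because $\sigma m = k$ — we get
\begin{equation*}
w_{k,m}(\lambda x, \lambda^\sigma \xi)^2 = 1 + \lambda^{2k}\bigl( |x|^{2k} + |\xi|^{2m} \bigr).
\end{equation*}
On the sphere $|(x,\xi)| = r$ the quantity $|x|^{2k} + |\xi|^{2m}$ is bounded above and below by positive constants depending only on $r,k,m$, so $w_{k,m}(\lambda x, \lambda^\sigma \xi) \asymp \lambda^k$ for $\lambda \geqs 1$, hence $w_{k,m}^n(\lambda x,\lambda^\sigma\xi) \asymp \lambda^{nk}$. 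That settles the case $\alpha = \beta = 0$.

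For higher derivatives the cleanest route is a homogeneity/chain-rule bookkeeping argument. Write $g(x,\xi) = |x|^{2k} + |\xi|^{2m}$, a polynomial, and note the elementary scaling $g(\lambda x, \lambda^\sigma \xi) = \lambda^{2k} g(x,\xi)$, which differentiated in $x$ and $\xi$ gives $(\pdd x\alpha \pdd \xi\beta g)(\lambda x,\lambda^\sigma\xi) = \lambda^{2k - |\alpha| - \sigma|\beta|}(\pdd x\alpha\pdd\xi\beta g)(x,\xi)$. Then $w_{k,m}^n = (1+g)^{n/2}$, so by the Faà di Bruno / multivariate chain rule $\pdd x\alpha\pdd\xi\beta (1+g)^{n/2}$ is a finite sum of terms of the form $c\,(1+g)^{n/2 - p}\prod_{i=1}^p \pdd x{\alpha_i}\pdd\xi{\beta_i} g$ with $\sum \alpha_i = \alpha$, $\sum\beta_i = \beta$, each $|\alpha_i| + |\beta_i| \geqs 1$, and $1 \leqs p \leqs |\alpha+\beta|$. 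Evaluating at $(\lambda x,\lambda^\sigma\xi)$ with $|(x,\xi)| = r$: each factor $\pdd x{\alpha_i}\pdd\xi{\beta_i}g$ contributes $\lambda^{2k - |\alpha_i| - \sigma|\beta_i|}$ times a quantity bounded on the sphere, and $(1+g)^{n/2-p}(\lambda x,\lambda^\sigma\xi) \asymp \lambda^{2k(n/2 - p)} = \lambda^{k(n - 2p)}$ (again using $g \asymp 1$ on the sphere and $\lambda\geqs 1$; if $n/2 - p < 0$ one uses $1 + \lambda^{2k}g \gtrsim \lambda^{2k}$, if $\geqs 0$ one uses $\lesssim \lambda^{2k}$). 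Multiplying the exponents, a typical term scales like
\begin{equation*}
\lambda^{k(n-2p)} \cdot \prod_{i=1}^p \lambda^{2k - |\alpha_i| - \sigma|\beta_i|} = \lambda^{k(n-2p) + 2kp - |\alpha| - \sigma|\beta|} = \lambda^{nk - |\alpha| - \sigma|\beta|},
\end{equation*}
which is exactly the bound \eqref{eq:anisotropbound1} requires. Summing the finitely many terms preserves the estimate, and Lemma~\ref{lem:symbol} then yields $w_{k,m}^n \in G^{nk,\sigma}$.

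The main obstacle is purely organizational rather than conceptual: keeping the chain-rule expansion honest for a non-integer (or negative) power $n/2$ and making sure the sign of the exponent $n/2 - p$ is handled correctly when passing from $1 + \lambda^{2k}g$ to a pure power of $\lambda$ — the inequality goes in opposite directions depending on that sign, but in both cases the bound one needs ($\lesssim \lambda^{\cdots}$) holds for $\lambda \geqs 1$. One should also note that $g$ and all its derivatives are genuine polynomials, so their restrictions to the compact sphere $|(x,\xi)| = r$ are automatically bounded, and that $1 + g \geqs 1$ everywhere guarantees no singularity in the negative powers. Everything else is the routine Faà di Bruno bookkeeping sketched above, and one could alternatively phrase the whole argument by the substitution $t = \lambda^{2k}$ to make the one-variable power law $t \mapsto (1+t)^{n/2}$ transparent.
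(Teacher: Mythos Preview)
Your proof is correct. Both the paper and you reach the same structural decomposition of $\pdd x \alpha \pdd \xi \beta w_{k,m}^n$ as a finite sum of terms of the form $c\,(1+g)^{n/2-p}\prod_i \pdd x{\alpha_i}\pdd \xi{\beta_i} g$ with $g(x,\xi)=|x|^{2k}+|\xi|^{2m}$; the paper obtains this by induction on $|\alpha+\beta|$ and records it as an explicit polynomial $p_{\alpha,\beta}(w,x,\xi)$, while you invoke Fa\`a di Bruno directly.

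The genuine difference is in the estimation step. The paper bounds each monomial $w^{2j}x^\gamma\xi^\kappa$ by $w^{2j+|\gamma|/k+|\kappa|/m}$, keeps track of the weighted degree constraint $2j+|\gamma|/k+|\kappa|/m \leqs (2-1/k)|\alpha|+(2-1/m)|\beta|$, and then converts via $w_{k,m}\asymp\theta_\sigma^k$ to the symbol estimate in Definition~\ref{def:symbol}. You instead route through Lemma~\ref{lem:symbol}: the anisotropic homogeneity $g(\lambda x,\lambda^\sigma\xi)=\lambda^{2k}g(x,\xi)$ makes the scaling of every factor explicit, and the exponents cancel to $nk-|\alpha|-\sigma|\beta|$ without any separate bookkeeping of polynomial degrees. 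Your route is a bit more conceptual and avoids tracking the constraint on $(j,\gamma,\kappa)$; the paper's route is more hands-on but self-contained (it does not appeal to Lemma~\ref{lem:symbol}). Both are short and both need, at the one nontrivial point, that $g$ is bounded below by a positive constant on the sphere $|(x,\xi)|=r$, which you note and use correctly to get $(1+\lambda^{2k}g)^{n/2-p}\asymp\lambda^{k(n-2p)}$ regardless of the sign of $n/2-p$.
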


\begin{proof}
To simplify notation we write $w = w_{k,m}$. 
It is clear that $w \in C^{\infty} (\rr {2d})$ and that $w$ is positive everywhere. 
We claim that for $\alpha, \beta \in \nn d$ we can write
\begin{equation}\label{eq:symbolderivative2}
\pdd x \alpha \pdd \xi \beta w^n (x, \xi) 
= \left( w (x, \xi)^2 \right)^{\frac{n}{2} - |\alpha + \beta|}
\, p_{\alpha,\beta} (w,x,\xi)
\end{equation}
where $p_{\alpha,\beta}$ are polynomials of the form 
\begin{equation}\label{eq:polyalphabeta}
p_{\alpha,\beta} (w,x,\xi) = \sum_{2 j + \frac{|\gamma|}{k} + \frac{|\kappa|}{m} \leqs \left( 2 - \frac1k \right) |\alpha| + \left( 2 - \frac1m \right)|\beta|} c_{j,\gamma,\kappa} w^{2j} x^\gamma \xi^\kappa 
\end{equation}
with real coefficients $c_{j,\gamma,\kappa}$ for $(j,\gamma,\kappa) \in \no \times \nn d \times \nn d$.

In fact the claim follows from an induction argument with respect to $|\alpha + \beta|$, starting with 
\begin{equation*}
\partial_{x_\ell} \left( \left( w (x, \xi)^2 \right)^{\frac{n}2} \right)
= n k |x|^{2 (k-1)} x_\ell \, \left( w (x, \xi)^2 \right)^{\frac{n}2 - 1}
\end{equation*}
and 
\begin{equation*}
\partial_{\xi_\ell} \left( \left( w (x, \xi)^2 \right)^{\frac{n}2} \right)
= n m |\xi|^{2 (m-1)} \xi_\ell \, \left( w (x, \xi)^2 \right)^{\frac{n}2 - 1}
\end{equation*}
for $1 \leqs \ell \leqs d$. 

Next we estimate a generic monomial in \eqref{eq:polyalphabeta}, using $2 j + \frac{|\gamma|}{k} + \frac{|\kappa|}{m} \leqs \left( 2 - \frac1k \right) |\alpha| + \left( 2 - \frac1m \right)|\beta|$, as
\begin{equation*}
\left| w^{2 j} x^\gamma \xi^\kappa \right|
\leqs w(x,\xi) ^{2 j + \frac{|\gamma|}{k} + \frac{| \kappa |}{m}} 
\leqs w(x,\xi) ^{\left( 2 - \frac1k \right) |\alpha| + \left( 2 - \frac1m \right)|\beta|}. 
\end{equation*}
Inserting into \eqref{eq:symbolderivative2} and exploiting \eqref{eq:weightequivalence} finally give for any $\alpha,\beta \in \nn d$ 
the estimate 
\begin{align*}
\left| \pdd x \alpha \pdd \xi \beta w^n (x, \xi) \right|
& = w(x, \xi)^{n - 2 |\alpha + \beta|}
\, \left| p_{\alpha,\beta} (w,x,\xi) \right| \\
& \lesssim w(x, \xi)^{n - \frac1k  |\alpha| - \frac1m  |\beta|} \\
& \asymp \theta_\sigma (x,\xi)^{n k - |\alpha| - \sigma |\beta| }. 
\end{align*}
\end{proof}

Suppose $\sigma > 0$ is rational, that is $\sigma = \frac{k}{m}$ with $k,m \in \no \setminus 0$. 
In \cite[Proposition~4.2]{Chatzakou1} the authors identify the symbol class 
$a \in G^{n,\sigma}$ with $n \in \ro$
as the Weyl--H\"ormander symbol class \cite[Chapter~18.4]{Hormander1}
\begin{equation}\label{eq:weylhormandersymbol}
G^{n,\sigma} = S( h_g^{-\frac{n}{1 + \sigma}}, g) 
\end{equation}
defined by the metric
\begin{equation*}
g = \frac{\dd x^2}{\left( 1 + |x|^{2k} + |\xi|^{2m} \right)^{\frac1k}} + \frac{\dd \xi^2}{ \left(1 + |x|^{2k} + |\xi|^{2m} \right)^{\frac1m} } 
\end{equation*}
and the weight $h_g^{-\frac{n}{1 + \sigma}}$. 
Here $h_g$ is the so called Planck function associated to $g$ \cite{Chatzakou1,Hormander1}. 
The Planck function is according to \cite[Remark~2.4]{Chatzakou1}
\begin{equation}\label{eq:planckfunction}
h_g (x,\xi) = \left( 1 + |x|^{2k} + |\xi|^{2m} \right)^{- \frac12 \left(\frac1k + \frac1m \right)}
= \left( w_{k,m} (x,\xi) \right)^{- \left(\frac1k + \frac1m \right)}. 
\end{equation}

From this two conclusions follows: First we observe that $h_g$ satisfies the so called uncertainty principle
\begin{equation}\label{eq:uncertaintyprinciple}
h_g (x,\xi) \leqs 1 \quad \forall (x,\xi) \in T^* \rr d, 
\end{equation}
and secondly by Lemma \ref{lem:sobolevsymbol} we have $h_g \in G^{-1-\sigma,\sigma}$.

\section{Globally anisotropic Shubin--Sobolev spaces, localization operators and a sharp G{\aa}rding inequality}
\label{sec:ShubinSobolev}

In this paper we will often use the following parametrized family of Hilbert modulation spaces. 
These spaces also have an independent interest. Proposition \ref{prop:continuitySobolev} complements the anisotropic Shubin pseudodifferential calculus in \cite{Rodino4}. 

\begin{defn}\label{def:Sobolevaniso}
Let $\fy \in \cS(\rr d) \setminus 0$. 
The anisotropic Shubin--Sobolev modulation space $M_{\sigma,s} (\rr d)$ with anisotropy parameter $\sigma > 0$ and order $s \in \ro$ is the Hilbert subspace of $\cS'(\rr d)$
defined by the norm
\begin{equation}\label{eq:SSmodnorm}
\| u \|_{M_{\sigma,s}} = \left( \iint_{\rr {2 d}} |V_\fy u (x,\xi)|^2 \, \theta_\sigma (x,\xi)^{2 s} \, \dd x \, \dd \xi \right)^{\frac12}. 
\end{equation}
\end{defn}

For any $\sigma > 0$ we have $M_{\sigma,0} (\rr d) = L^2(\rr d)$ \cite{Grochenig1}, 
and $M_{\sigma,s_1} (\rr d) \subseteq M_{\sigma,s_2}(\rr d)$ is a continuous inclusion when $s_1 \geqs s_2$. 
It holds
\begin{equation}\label{eq:SSSchwartz}
\cS(\rr d) = \bigcap_{s \in \ro} M_{\sigma,s} (\rr d), \quad \cS'(\rr d) = \bigcup_{s \in \ro} M_{\sigma,s} (\rr d), 
\end{equation}
and $\{ \| \cdot \|_{M_{\sigma,s}}, s \geqs 0\}$ is a family of seminorms that defines the Fr\'echet space topology on $\cS(\rr d)$ \cite{Grochenig1}. 

The next continuity result is a natural generalization of the isotropic Shubin calculus. 
More precisely it generalizes \cite[Proposition~1.5.5]{Nicola1} and \cite[Theorem~25.2]{Shubin1}. 

\begin{prop}\label{prop:continuitySobolev}
Let $\sigma > 0$ and $m,s \in \ro$. 
If $a \in G^{m,\sigma}$ then 
\begin{equation}\label{eq:ShubinSobolevCont}
a^w(x,D) : M_{\sigma,s + m} (\rr d) \to M_{\sigma,s} (\rr d)
\end{equation}
is continuous. 
\end{prop}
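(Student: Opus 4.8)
The plan is to prove the continuity $a^w(x,D): M_{\sigma,s+m}(\rr d) \to M_{\sigma,s}(\rr d)$ by reducing it to the known $L^2$-boundedness of Weyl operators with symbols in $G^{0,\sigma}$, using the elliptic calculus and the weight symbols $w_{k,m}^n$ identified in Lemma \ref{lem:sobolevsymbol}. There are two natural routes. The first works directly when $\sigma$ is rational, $\sigma = k/m$: in that case, by \eqref{eq:weightequivalence} and Lemma \ref{lem:sobolevsymbol}, for each real order $\ell$ there is an elliptic symbol $\Theta_\ell \in G^{\ell,\sigma}$ with $\Theta_\ell \asymp \theta_\sigma^\ell$ (e.g.\ a suitable power of $w_{k,m}$ when $\ell/k \in \zo$, or a composition/parametrix construction in general), and one checks readily from Definition \ref{def:Sobolevaniso} that $\Theta_s^w(x,D): M_{\sigma,s} \to L^2$ is an isomorphism modulo a regularizing (hence harmless) remainder. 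Conjugating, $\Theta_s^w \, a^w \, (\Theta_{s+m}^w)^{-1}$ has symbol in $G^{0,\sigma}$ by the Weyl product continuity $\wpr: G^{m_1,\sigma} \times G^{m_2,\sigma} \to G^{m_1+m_2,\sigma}$ quoted after \eqref{eq:calculuscomposition1}, and the claim follows from $G^{0,\sigma} \subseteq \mathcal L(L^2)$.

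Since the statement is asserted for arbitrary real $\sigma > 0$, the cleaner route is to prove $L^2$-boundedness of $G^{0,\sigma}$ operators first and then bootstrap. For $L^2$-boundedness one can either invoke the embedding $G^{0,\sigma} \subseteq G^{m_0}_\rho$ with $m_0 = \max(0,0) = 0$ and $\rho = \min(\sigma,1/\sigma)$ noted after Definition \ref{def:symbol}, together with the Calderón--Vaillancourt theorem for $G^0_\rho$; or one can use the Weyl--Hörmander framework \eqref{eq:weylhormandersymbol} (valid for rational $\sigma$) and the general $L^2$-continuity theorem for $S(1,g)$ with $g$ $\sigma$-temperate and satisfying the uncertainty principle \eqref{eq:uncertaintyprinciple}. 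Granting $G^{0,\sigma}$-boundedness on $L^2 = M_{\sigma,0}$, one obtains the general statement as follows: for $u \in M_{\sigma,s+m}$, write $b = a \wpr \Theta_{s}$ where $\Theta_s \in G^{s,\sigma}$ is elliptic with parametrix $\Theta_{-s} \in G^{-s,\sigma}$ (so $\Theta_s \wpr \Theta_{-s} = 1 + r$, $r \in \cS(\rr{2d})$, by the elliptic calculus around \eqref{eq:ellipticity}); then $\Theta_s^w a^w u = b^w u = (b \wpr \Theta_{-(s+m)})^w \Theta_{s+m}^w u$ with $b \wpr \Theta_{-(s+m)} \in G^{0,\sigma}$, hence $\|\Theta_s^w a^w u\|_{L^2} \lesssim \|\Theta_{s+m}^w u\|_{L^2} \lesssim \|u\|_{M_{\sigma,s+m}}$, and finally $\|a^w u\|_{M_{\sigma,s}} \asymp \|\Theta_s^w a^w u\|_{L^2}$ up to a regularizing term controlled by \eqref{eq:symbolintersection}.

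The main obstacle is the $L^2$-boundedness of $G^{0,\sigma}$ Weyl operators for \emph{non-rational} $\sigma$, where the convenient Weyl--Hörmander identification \eqref{eq:weylhormandersymbol} is not available and one must rely on the inclusion into the isotropic $G^0_\rho$-class plus Calderón--Vaillancourt, checking that the $\rho$ obtained is positive (which it is, since $\sigma > 0$). A secondary technical point is the clean construction of the elliptic "order-reducing" symbols $\Theta_\ell \in G^{\ell,\sigma}$ with $\Theta_\ell \asymp \theta_\sigma^\ell$ for all real $\ell$ and the verification that $\Theta_s^w$ induces an isomorphism $M_{\sigma,s} \to L^2$ modulo $\cS$; this is routine given \eqref{eq:SSmodnorm}, Lemma \ref{lem:symbol}, the Peetre inequality \eqref{eq:peetreanisotropic} for $\theta_\sigma$, and the parametrix construction, but it is where the bulk of the bookkeeping lies. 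Everything else is a direct transcription of the isotropic Shubin argument in \cite[Proposition~1.5.5]{Nicola1}.
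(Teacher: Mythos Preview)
Your approach is the classical order-reduction argument from the Shubin calculus, and it differs substantially from what the paper does. The paper proceeds directly at the STFT level: it first characterises $a \in G^{m,\sigma}$ by the decay estimate
\[
\left| V_\Phi a(z,\zeta) \right| \lesssim \theta_\sigma(z)^{m}\eabs{\zeta}^{-N}
\]
(the anisotropic analogue of \cite[Proposition~3.2]{Cappiello3}), then feeds this into the convolution-type kernel bound
\[
|V_\fy (a^w u)(z)| \lesssim \int_{\rr{2d}} |V_\fy u(z-w)|\, \left|V_\Phi a\!\left(z-\tfrac{w}{2},\J w\right)\right|\, \dd w,
\]
and finishes with Minkowski's inequality and the anisotropic Peetre inequality \eqref{eq:peetreanisotropic}. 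This gives $\|a^w u\|_{M_{\sigma,s}} \lesssim \|u\|_{M_{\sigma,s+m}}$ in a few lines, uniformly for every real $\sigma>0$, with no appeal to $L^2$-boundedness, order-reducing operators, or the parametrix construction.

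Your strategy is in principle workable but has a genuine gap as written. The step you call routine --- that $\Theta_s^w$ induces an isomorphism $M_{\sigma,s}\to L^2$ modulo $\cS$ --- is precisely the hard part, and it does \emph{not} follow from the ingredients you list (\eqref{eq:SSmodnorm}, Lemma~\ref{lem:symbol}, \eqref{eq:peetreanisotropic}, and the parametrix). The norm $\|u\|_{M_{\sigma,s}}$ is defined via the STFT, not via a pseudodifferential operator, so the equivalence $\|u\|_{M_{\sigma,s}} \asymp \|\Theta_s^w u\|_{L^2}$ is an independent result. Establishing either inequality by your method already requires the continuity statement $\Theta_{\pm s}^w: M_{\sigma,\pm s}\to L^2$ or $L^2\to M_{\sigma,\pm s}$, which is exactly Proposition~\ref{prop:continuitySobolev} applied to $\Theta_{\pm s}$ --- circular. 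The non-circular route is to invoke the Gr\"ochenig--Toft localization operator isomorphism \eqref{eq:locopmodspace} (which the paper uses only \emph{after} this proposition), but then you still need to identify the Weyl symbol of $A_{\theta_\sigma^s}$ as an elliptic element of $G^{s,\sigma}$; for irrational $\sigma$ this is not available via Lemma~\ref{lem:sobolevsymbol}, and even for rational $\sigma$ it requires the $w_{k,m}$ machinery. The paper's direct STFT argument sidesteps all of this.
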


\begin{proof}
By a small modification of the proof of \cite[Proposition~3.2]{Cappiello3} 
it follows that $a \in G^{m,\sigma}$ if and only if 
\begin{equation}\label{eq:symbolcharacterization}
\left| \pdd {z_1} \alpha \pdd {z_2} \beta \cT_\fy a( z ,\zeta ) \right|
\lesssim \theta_\sigma (z)^{m - |\alpha| - \sigma |\beta|} \eabs{\zeta}^{-N}, \quad z, \zeta \in \rr {2d}, \quad \alpha, \beta \in \nn d, \quad N \geqs 0, 
\end{equation}
where $z = (z_1,z_2)$ with $z_1, z_2 \in \rr d$, $g \in \cS(\rr {2d}) \setminus 0$, and where $\cT_\fy u$ is defined by 
\begin{equation*}
\cT_\fy u(x,\xi)=(2\pi)^{-\frac{d}{2}} (u,T_x M_{\xi} \fy) = e^{i \la x, \xi \ra} V_\fy u(x,\xi), 
\quad x, \xi \in \rr d, 
\end{equation*}
for $u \in \cS'(\rr d)$ and $\fy \in \cS(\rr d) \setminus 0$. 
In fact in the original proof \cite{Cappiello3} we only have to replace the weight $\eabs{\cdot}$ used there by $\theta_\sigma$, 
take into account the behavior with respect to derivatives of $a \in G^{m,\sigma}$ with respect to $z_1$ and $z_2$ respectively, 
and use \eqref{eq:peetreanisotropic}. 

Let $\fy \in \cS(\rr d) \setminus 0$ and set $\Phi = W(\fy, \fy) \in \cS(\rr {2d}) \setminus 0$. 
If $u \in \cS'(\rr d)$ then by \cite[Eq.~(5.3)]{Rodino4} we have 
\begin{equation}\label{eq:STFTWeylop}
|V_\fy (a^w(x,D) u) (z)|
\lesssim \int_{\rr {2d}}  |V_\fy u(z-w)| \, \left| V_\Phi a \left( z-\frac{w}{2}, \J w \right) \right| \, \dd w.
\end{equation}
We obtain from \eqref{eq:symbolcharacterization}, \eqref{eq:peetreanisotropic} and 
\eqref{eq:sobolevweightestimate1}
the estimates 
\begin{align*}
\left| V_\Phi a \left( z - \frac{w}{2},  \J w \right) \right|
& \lesssim \theta_\sigma( z-w )^{m} \theta_\sigma (w)^{|m|} \eabs{ w }^{-N} \\
& \lesssim \theta_\sigma ( z-w )^{m} \eabs{ w }^{- \left( N - |m| \max(1, \frac1\sigma) \right)}, \quad z, w \in \rr {2d}, \quad N \geqs 0. 
\end{align*}
Combining this with \eqref{eq:STFTWeylop}, Minkowski's inequality and again \eqref{eq:peetreanisotropic} yields
\begin{align*}
\| a^w (x,D) u \|_{M_{\sigma,s}}
& =  \left\| V_\fy (a^w (x,D) u) \, \theta_\sigma^s \right\|_{L^2 (\rr {2d})} \\
& \lesssim \left\|  \int_{\rr {2d}}  |V_\fy u(\cdot-w)| \, \left| V_\Phi a \left( \cdot-\frac{w}{2}, \J w \right) \right| \, \dd w \, \theta_\sigma (\cdot)^s \right\|_{L^2 (\rr {2d})} \\
& \lesssim \left\|  \int_{\rr {2d}}  |V_\fy u(\cdot-w)| \,  \theta_\sigma( \cdot -w )^{m + s} \eabs{ w }^{- \left( N - ( |m| + |s| ) \max(1, \frac1\sigma) \right)} \, \dd w \right\|_{L^2 (\rr {2d})} \\
& \lesssim \left\| V_\fy u \,  \theta_\sigma^{m + s} \right\|_{L^2 (\rr {2d})} \\
& = \| u \|_{M_{\sigma,m + s}}
\end{align*}
provided $N \geqs 0$ is sufficiently large. 
\end{proof}

Let $\fy \in \cS(\rr d)$ satisfy $\| \fy \|_{L^2} = 1$. 
A localization operator $A_a$ with symbol $a \in \cS'(\rr {2d})$ is defined as 
\begin{equation}\label{eq:locop}
(A_a f, g) = (a V_\fy f, V_\fy g) = (V_\fy^* a V_\fy f, g), \quad f,g \in \cS(\rr d), 
\end{equation}
that is $A_a = V_\fy^* a V_\fy$. Then $A_a: \cS(\rr d) \to \cS'(\rr d)$ is continuous. 
We will assume that $\fy$ is a Gaussian. 

By \cite[Theorem~1.1]{Grochenig2} we have 
for any $\sigma > 0$ and $s \in \ro$
\begin{equation}\label{eq:locopmodspace}
\| A_{\theta_\sigma^s} u \|_{L^2(\rr d)}
 \asymp 
 \| u \|_{M_{\sigma,s}(\rr d)}
\end{equation}
which means that $A_{\theta_\sigma^s}: M_{\sigma,s}(\rr d) \to L^2(\rr d)$ 
is an isometry. 

If $a \in \cS'(\rr {2d})$ and $\fy$ is a Gaussian on $\rr d$ we have $A_a = b^w(x,D)$ where $b = a * \psi$ 
with $\psi$ a Gaussian on $\rr {2d}$ \cite[Proposition~1.7.9]{Nicola1}. 
If $\sigma > 0$ and $a \in G^{m, \sigma}$ then also $b \in G^{m, \sigma}$, and 
$a$ real-valued implies that also $b$ is real-valued \cite[Theorem~1.7.10]{Nicola1}.

In Section \ref{sec:solutionsaniso} we will need the following inequality of sharp G{\aa}rding type. 

\begin{lem}\label{lem:Garding}
Let $k,m \in \no \setminus 0$ and $\sigma = \frac{k}{m}$. 
If $a \in G^{2\left(1 + \sigma \right),\sigma}$
and $a \geqs 0$ then there exists $c > 0$ such that 
\begin{equation}\label{eq:gardinginequality}
( a^w(x,D) f, f) \geqs - c \| f \|_{L^2}^2, \quad f \in \cS(\rr d).
\end{equation}
\end{lem}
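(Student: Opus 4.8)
The plan is to follow the classical sharp Gårding argument (as in \cite{Hormander1,Shubin1}), adapted to the anisotropic Shubin calculus, using the Friedrichs symmetrization furnished by localization operators from Section~\ref{sec:ShubinSobolev}. First I would reduce the nonnegativity of $a$ to a statement about localization operators: let $\fy$ be a Gaussian with $\| \fy \|_{L^2} = 1$ and consider the localization operator $A_a = V_\fy^* a V_\fy$. Since $a \geqs 0$, the quadratic form $(A_a f, f) = \int_{\rr {2d}} a(z) |V_\fy f(z)|^2 \, \dd z \geqs 0$ for all $f \in \cS(\rr d)$, so $A_a$ is a nonnegative operator with no need of any lower-order correction. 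The problem is therefore reduced to comparing $a^w(x,D)$ with $A_a$ and showing their difference is bounded on $L^2(\rr d)$.

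Next I would invoke the identity $A_a = b^w(x,D)$ with $b = a * \psi$, $\psi$ a Gaussian on $\rr {2d}$, recorded after \eqref{eq:locopmodspace}, together with the fact that $a \in G^{2(1+\sigma),\sigma}$ implies $b \in G^{2(1+\sigma),\sigma}$. The key point is to estimate the order of the remainder $r := a - b = a - a * \psi$. Writing $r(z) = \int (a(z) - a(z-w)) \psi(w) \, \dd w$ and Taylor expanding $a$ around $z$ to first order (the first-order term integrates to zero against the even Gaussian $\psi$, or one simply keeps it and absorbs it), one gains, by the symbol estimates \eqref{eq:symbolderivative1} for $a$, a factor $\theta_\sigma(z)^{-\min(2,2\sigma)} = \theta_\sigma(z)^{2 \min(1,\sigma)}$ less than the full order; more carefully, each $x$-derivative lowers the $\theta_\sigma$-order by $1$ and each $\xi$-derivative by $\sigma$, and the localization of $\psi$ near the origin together with \eqref{eq:peetreanisotropic} controls the tails. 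Thus $r \in G^{2(1+\sigma) - 2\min(1,\sigma), \sigma} = G^{2\sigma + \min(2, 2\sigma) , \sigma}$ — in any case an order strictly less than $2(1+\sigma)$; one checks that this order is at most $0$ precisely because $2(1+\sigma) - 2\min(1,\sigma) = 2\max(1,\sigma) \cdot \text{(something)}$... wait, $2(1+\sigma) - 2\min(1,\sigma) = 2 + 2\sigma - 2\min(1,\sigma) = 2\max(1,\sigma) + 2\min(1,\sigma) - 2\min(1,\sigma) \le 0$ fails in general, so the correct route is: iterate the Taylor expansion / use the full asymptotic Weyl-type expansion of the localization-operator symbol, gaining $(1+\sigma)$ per step as in \eqref{eq:calculuscomposition1}, until the remainder lands in $G^{0,\sigma}$.

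Then I would apply Proposition~\ref{prop:continuitySobolev} with $m = 0$, $s = 0$, which gives that $r^w(x,D) : M_{\sigma,0}(\rr d) = L^2(\rr d) \to L^2(\rr d)$ is continuous, i.e. bounded. Therefore
\begin{equation*}
(a^w(x,D) f, f) = (A_a f, f) + (r^w(x,D) f, f) \geqs 0 - \| r^w(x,D) \|_{L^2 \to L^2} \| f \|_{L^2}^2 =: - c \| f \|_{L^2}^2,
\end{equation*}
which is \eqref{eq:gardinginequality}. I expect the main obstacle to be the bookkeeping in the second step: showing that the symbol $r = a - a*\psi$ of the Friedrichs correction drops from order $2(1+\sigma)$ down to order $\leqs 0$ in the scale $G^{\cdot,\sigma}$, which requires carrying the Taylor/Moyal expansion far enough — each order of expansion gaining $1+\sigma$ in the exponent of $\theta_\sigma$ — and estimating the integral remainder using \eqref{eq:symbolderivative1} and the Peetre inequality \eqref{eq:peetreanisotropic} for $\theta_\sigma$, while handling the Gaussian tails in $w$. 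An alternative to this convolution computation, which I would mention as a backup, is the abstract symmetrization: decompose $a = \sum a_{z_0}$ via a $\sigma$-conic partition of unity adapted to the metric $g$ of \eqref{eq:weylhormandersymbol}, symmetrize each piece, and sum; but the localization-operator route above is cleaner given the tools already assembled.
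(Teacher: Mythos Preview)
Your approach has a genuine gap. The paper's proof is a two-line appeal to the Weyl--H\"ormander identification \eqref{eq:weylhormandersymbol}, which gives $G^{2(1+\sigma),\sigma} = S(h_g^{-2},g)$ with Planck function $h_g \leqs 1$, and then invokes the Fefferman--Phong inequality \cite[Theorem~18.6.8]{Hormander1}. The Friedrichs/anti-Wick route you propose cannot reach the order $2(1+\sigma)$ required by the statement: with a \emph{fixed} Gaussian window $\psi$ on $\rr{2d}$, the Taylor expansion of $a - a*\psi$ has vanishing first-order moments, so the leading contribution comes from the pure second derivatives $\partial_{x_j}^2 a \in G^{m-2,\sigma}$ and $\partial_{\xi_j}^2 a \in G^{m-2\sigma,\sigma}$ (mixed moments of the isotropic Gaussian vanish). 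Hence $a - a*\psi \in G^{m-2\min(1,\sigma),\sigma}$, and for $m = 2(1+\sigma)$ this gives order $2\max(1,\sigma) > 0$, never $\leqs 0$. This is an explicit nonvanishing term, not a remainder, so ``iterating the Taylor expansion'' cannot remove it; you are confusing the $(1+\sigma)$-gain per step in the Weyl product \eqref{eq:calculuscomposition1} (where each term pairs an $x$-derivative with a $\xi$-derivative) with the Gaussian moment expansion, whose gain per even step is only $2\min(1,\sigma)$.

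More structurally: anti-Wick symmetrization yields at best the sharp G{\aa}rding inequality for $S(h_g^{-1},g) = G^{1+\sigma,\sigma}$, and even that already requires a \emph{metric-adapted} window (scaling with $\theta_\sigma$) rather than the fixed one you use; with a fixed window the argument works only for $\sigma = 1$. The statement of the lemma, however, concerns $S(h_g^{-2},g) = G^{2(1+\sigma),\sigma}$, which is one full Planck unit beyond sharp G{\aa}rding and genuinely requires Fefferman--Phong. Your backup suggestion of a $\sigma$-conic partition of unity adapted to $g$ is closer in spirit to what is needed, but carrying it out amounts to reproving Fefferman--Phong in this calculus; the paper sidesteps this by citing the general Weyl--H\"ormander version directly.
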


\begin{proof}
By \eqref{eq:weylhormandersymbol} we have $G^{2\left(1 + \sigma \right),\sigma} = S( h_g^{-2}, g)$, 
where the Planck function $h_g$ is defined by \eqref{eq:planckfunction} and satisfies the uncertainty principle 
\eqref{eq:uncertaintyprinciple}. 
The conclusion is now a consequence of the Fefferman--Phong inequality \cite[Theorem~18.6.8]{Hormander1}. 
\end{proof}

\section{Propagation of anisotropic Gabor wave front sets for evolution equations of Airy--Schr\"odinger type}
\label{sec:airyschrodinger}

In this section we consider the evolution equation
\begin{equation}\label{eq:airyschrodeq}
\left\{
\begin{array}{rl}
\partial_t u(t,x) + i \left( p(D_x) + \la v, x \ra \right)u (t,x) & = 0, \quad x \in \rr d, \quad t \in \ro, \\
u(0,\cdot) & = u_0.
\end{array}
\right.
\end{equation}
Here $v = (v_1, \dots, v_d) \in \rr d$ is a vector with nonzero entries: $v_j \neq 0$, $1 \leqs j \leqs d$, and $p: \rr d \to \ro$ is a polynomial with real coefficients of order $m \geqs 2$ 
which is a sum of one variable polynomials, that is
\begin{equation}\label{eq:polynomialsum}
p (\xi) = \sum_{j=1}^d p_j(\xi_j), \quad \xi = ( \xi_1, \xi_2, \dots, \xi_d) \in \rr d, 
\end{equation}
where 
\begin{equation}\label{eq:polynomialonevar}
p_j (\xi_j) = \sum_{k=0}^{m_j} c_{j,k} \xi_j^k, \quad c_{j,k} \in \ro, \quad c_{j,m_j} \neq 0, 
\end{equation}
and $\max_{j=1}^d \deg p_j = \max_{j=1}^d m_j = m$. 
The principal part of $p$ is 
\begin{equation}\label{eq:principalpart}
P_m (\xi) = \sum_{j \in \{1, \dots,d \} : \ m_j = m} c_{j,m} \xi_j^m.
\end{equation}

We say that the equation \eqref{eq:airyschrodeq} is of Airy--Schr\"odinger type, since when $d = 1$ a particular case of the Hamiltonian is the operator
\begin{equation*}
a(x,D) = \frac{\dd^2}{\dd x^2} - x
\end{equation*}
which defines the Airy equation $a(x,D) f = 0$. 
This equation is satisfied by the Airy function \cite[Chapter~7.6]{Hormander1}. 

First we deduce the explicit solution $u(t,x) = \cK_t u_0 (x)$ to \eqref{eq:airyschrodeq} defined by the propagator $\cK_t$, 
and in particular an expression for the Schwartz kernel $K_t$ of $\cK_t$ for each $t \in \ro$. 
Let $q_j$ be primitive polynomials of $p_j$: 
\begin{equation}\label{eq:primitivepolynomial}
q_j' = p_j, \quad 1 \leqs j \leqs d. 
\end{equation}
If $u_0 \in \cS(\rr d)$ then the solution to \eqref{eq:airyschrodeq} is given by
\begin{equation}\label{eq:solution1}
\begin{aligned}
u(t,x) 
& = (2 \pi)^{-\frac{d}{2}} \int_{\rr d} e^{i \left( \la x ,\xi - v t \ra  + \sum_{j=1}^d v_j^{-1}  \left( q_j(\xi_j - t v_j )  - q_j( \xi_j) \right) \right)}  \wh u_0(\xi) \dd \xi \\
& = e^{- i t \la x, v \ra } \cF^{-1} \left(  e^{i \fy_t }  \wh u_0 \right) (x) 
= M_{- t v} \cF^{-1} \left(  e^{i \fy_t }  \wh u_0 \right) (x)
\end{aligned}
\end{equation}
where 
\begin{equation}\label{eq:phasefunction}
\fy_t (\xi) = \sum_{j=1}^d v_j^{-1}  \left( q_j(\xi_j - t v_j)  - q_j( \xi_j) \right). 
\end{equation}
This can be confirmed by insertion of \eqref{eq:solution1} into \eqref{eq:airyschrodeq}. 

The solution operator 
\begin{equation}\label{eq:solutionoperator}
\cK_t f = M_{- t v} \cF^{-1} \left(  e^{i \fy_t }  \wh f \right)
\end{equation}
is unitary on $L^2(\rr d)$, and since $\fy_{-t}(\xi) = - \fy_t(\xi + t v )$ we obtain for $f,g \in \cS(\rr d)$
\begin{align*}
( \cK_t f, g) 
& = \left( \wh f, e^{- i \fy_t} \cF \left( M_{t v} g \right) \right)  
= \left( \wh f, e^{- i \fy_t} T_{t v} \wh g \right) 
= \left( \wh f, T_{t v} \left( e^{- i \fy_t(\cdot + t v )} \wh g \right) \right) \\
& = \left( f, \cF^{-1} \left( T_{t v} \left( e^{i \fy_{-t}} \wh g \right) \right) \right)
= \left( f, M_{t v}  \cF^{-1} \left( e^{i \fy_{-t}} \wh g \right) \right) \
= (f, \cK_{-t} g)
\end{align*}
so $\cK_t^* = \cK_{-t} = \cK_t^{-1}$.
If $t_1, t_2 \in \ro$ then 
\begin{equation*}
\fy_{t_1}(\xi-t_2 v) + \fy_{t_2} (\xi) = \fy_{t_1 + t_2} (\xi)
\end{equation*}
which gives
\begin{align*}
\cK_{t_1} \cK_{t_2} f
& = M_{- t_1 v} \cF^{-1} \left(  e^{i \fy_{t_1} } \cF \left( M_{- t_2 v} \cF^{-1} \left(  e^{i \fy_{t_2} }  \wh f \right) \right) \right) \\
& = M_{- t_1 v} \cF^{-1} \left(  e^{i \fy_{t_1} } T_{- t_2 v}  \left(  e^{i \fy_{t_2} }  \wh f \right) \right) \\
& = M_{- t_1 v} \cF^{-1} \left(  T_{- t_2 v}  \left( e^{i \left( \fy_{t_1}(\cdot -t_2 v) + \fy_{t_2} \right)}  \wh f \right) \right) \\
& = M_{- (t_1 + t_2) v} \cF^{-1} \left(  e^{i \fy_{t_1 + t_2} }  \wh f \right)  
= \cK_{t_1 + t_2} f
\end{align*}
so the map $\ro \ni t \mapsto \cK_t$ is in fact a one-parameter group of unitary operators. 

The Schwartz kernel of the solution operator $\cK_t$ is
\begin{equation}\label{eq:schwartzkernel}
\begin{aligned}
K_t (x,y) & = (2 \pi)^{-d} \int_{\rr d} e^{i \left( \la x ,\xi - v t \ra - \la y, \xi \ra  + \fy_t(\xi)  \right)} \dd \xi \\
& = (2 \pi)^{-\frac{d}{2}}  e^{- i t \la x, v \ra } 
\cF^{-1} \left( e^{ i \fy_t }\right) (x-y) \\
& = (2 \pi)^{-\frac{d}{2}}  e^{- i t \la x, v \ra } \left( 1 \otimes \cF^{-1} 
e^{i \fy_t} \right) \circ \kappa^{-1}(x,y) \in \cS'(\rr {2d})
\end{aligned}
\end{equation}
where $\kappa \in \rr {2d \times {2d}}$ is the matrix defined by $\kappa(x,y) = (x+\frac{y}{2}, x - \frac{y}{2})$ for $x,y \in \rr d$. 
We note that $\cK_t$ acts continuously on $\cS(\rr d)$ for any $t \in \ro$, 
and extends uniquely to a continuous linear operator on $\cS' (\rr d)$ by
\begin{equation*}
( \cK_t u, \fy) := (u, \cK_{-t} \fy ), \quad u \in \cS'(\rr d), \quad \fy \in \cS(\rr d).  
\end{equation*}

For each $1 \leqs j \leqs d$ we have 
\begin{align*}
q_j(\xi_j - t v_j)  - q_j( \xi_j)
& = \sum_{k=0}^{m_j} \frac{c_{j,k}}{k+1}  \left( (\xi_j - t v_j)^{k+1} - \xi_j^{k+1} \right) \\
& = \sum_{k=0}^{m_j} \left( - c_{j,k} t v_j \xi_j^k + \frac{c_{j,k}}{k+1}  \sum_{n=2}^{k+1} \binom{k+1}{n} (-t v_j)^{n} \xi_j^{k+1-n} \right). 
\end{align*}
Hence the phase function $\fy_t (\xi) $
is a polynomial of order $m$ with highest order term
\begin{equation*}
\fy_{t,m} (\xi) = - t \sum_{j \in \{1, \dots,d \} : \ m_j = m} c_{j,m} \xi_j^m = - t P_m(\xi). 
\end{equation*}

We may now give a result which generalizes a particular case of \cite[Theorem~5.1]{Wahlberg4}. 
More precisely, in the quoted result the polynomial $p$ is arbitrary with real coefficients, whereas here we assume the particular ``separable'' form 
\eqref{eq:polynomialsum}. On the other hand, in Theorem \ref{thm:propagationAiry1} below we allow a vector $v \in \rr d$ with non-zero entries. 
The result uses the Hamilton flow corresponding to the principal part $P_m(\xi)$ of the polynomial $p(\xi)$, that is  
\begin{equation}\label{eq:hamiltonflow1}
\chi_t (x, \xi)
= (x + t \nabla P_{m} (\xi), \xi), \quad t \in \ro, \quad (x, \xi) \in T^* \rr d \setminus 0. 
\end{equation}

\begin{thm}\label{thm:propagationAiry1}
Let $p$ be a polynomial with real coefficients defined by \eqref{eq:polynomialsum}, \eqref{eq:polynomialonevar}, 
of order $m = \max_{j=1}^d \deg p_j \geqs 2$, 
with principal part $P_m$ defined by \eqref{eq:principalpart}. 
Denote the Hamilton flow of $P_m(\xi)$ as in \eqref{eq:hamiltonflow1}. 
Suppose $\cK_t: \cS' (\rr d) \to \cS' (\rr d)$ is the solution operator for the evolution equation \eqref{eq:airyschrodeq}, with Schwartz kernel \eqref{eq:schwartzkernel}
where $\fy_t$ is defined by \eqref{eq:primitivepolynomial} and \eqref{eq:phasefunction}. 
Then 
\begin{align}
\WFgs( \cK_t u)  & = \chi_t  \left( \WFgs (u) \right), \quad t \in \ro, \quad u \in \cS'(\rr d), \quad \sigma = \frac1{m-1}, \label{eq:propagationG1} \\
\WFgs( \cK_t u)  & = \WFgs (u), \quad t \in \ro, \quad u \in \cS'(\ro), \quad \sigma < \frac1{m-1}. \label{eq:propagationG2}
\end{align}
\end{thm}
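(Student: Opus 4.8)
The plan is to exploit the explicit formula \eqref{eq:schwartzkernel} for the Schwartz kernel $K_t$ of $\cK_t$, which exhibits $\cK_t$ as (modulation) $\circ$ (Fourier multiplier with symbol $e^{i\fy_t}$) $\circ$ (nothing), and then decompose $\fy_t$ into its top-order term $\fy_{t,m}=-tP_m$ plus a lower-order polynomial remainder. The key observation is that the remainder $\fy_t-\fy_{t,m}$ is a polynomial of degree $\leqs m-1$, and composing with $e^{i(\fy_t-\fy_{t,m})}$ is — up to an error that is regularizing in the appropriate anisotropic sense — a pseudodifferential operator that does not move $\WFgs$. So the first step is to reduce, modulo such negligible operators, to analysing the three elementary building blocks: the modulation $M_{-tv}$, the Fourier multiplier $e^{-itP_m(D)}$, and the Fourier multiplier $e^{i(\fy_t-\fy_{t,m})(D)}$.

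Second, I would track how each building block transforms $\WFgs$. Modulation $M_{-tv}$ is a translation in phase space by $(0,-tv)$; since $\WFgs$ is $\sigma$-conic and measures behavior along curves $\lambda\mapsto(\lambda x,\lambda^\sigma\xi)$, a bounded translation in the $\xi$-direction is negligible precisely when $\sigma\leqs 1$ (here $\sigma=\frac1{m-1}\leqs 1$ as $m\geqs 2$): for $z_0=(x_0,\xi_0)$ with $x_0\neq 0$ the curve escapes to infinity in the first coordinate, so adding a bounded $\xi$-shift is absorbed, and for $x_0=0$ one argues directly. The Fourier multiplier $e^{-itP_m(D)}$ has symbol that is anisotropically homogeneous of degree $0$ under $(x,\xi)\mapsto(\lambda x,\lambda^\sigma\xi)$ when we use the dual variable only — this is exactly the source of the flow $\chi_t(x,\xi)=(x+t\nabla P_m(\xi),\xi)$: conjugating the STFT by such a multiplier produces the stationary-phase shift $x\mapsto x+t\nabla P_m(\xi)$ in the phase-space variable, and since $\nabla P_m$ is homogeneous of degree $m-1=\frac1\sigma$ in $\xi$, the map $\chi_t$ commutes with the anisotropic scaling, hence acts on $\sigma$-conic sets. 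Finally $e^{i(\fy_t-\fy_{t,m})(D)}$ has a symbol with dual-variable growth strictly below the homogeneity threshold, so by the microlocal inclusions recalled after \eqref{eq:WFcharacteristic} (or directly, since $\fy_t-\fy_{t,m}$ is $O(|\xi|^{m-1})=O(\theta_\sigma)$ but its contribution to the oscillation is of strictly lower anisotropic order than that governing the flow) it leaves $\WFgs$ unchanged. Composing: $\WFgs(\cK_t u)=\chi_t(\WFgs(u))$ when $\sigma=\frac1{m-1}$, which is \eqref{eq:propagationG1}; and when $\sigma<\frac1{m-1}$ even the top-order term $P_m(\xi)$ has dual growth $|\xi|^m$ with $m<\frac1\sigma+1$... — more carefully, the relevant comparison is that $\nabla P_m(\xi)=O(|\xi|^{m-1})$ and $(m-1)<\frac1\sigma$, so the shift $x\mapsto x+t\nabla P_m(\xi)$ is negligible along the $\sigma$-curve, giving $\chi_t$ trivial on $\WFgs$, hence \eqref{eq:propagationG2}. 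Rather than redo this by hand I would invoke the general propagation results of \cite{Wahlberg3,Wahlberg4} quoted in the introduction, which handle Fourier multipliers of this exact polynomial type and yield precisely these conclusions; the explicit kernel \eqref{eq:schwartzkernel} is then used only to verify that the present operator $\cK_t$ falls within their scope, accounting for the extra factor $M_{-tv}$ and the lower-order part of $p$.

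Concretely the proof proceeds: (i) write $\cK_t u = M_{-tv}\,\cF^{-1}\big(e^{i\fy_t}\wh u\big)$ from \eqref{eq:solution1}; (ii) split $e^{i\fy_t}=e^{i\fy_{t,m}}\cdot e^{i(\fy_t-\fy_{t,m})}$ and observe $e^{i(\fy_t-\fy_{t,m})}\in G^{0,\sigma}$ with the corresponding multiplier microlocally trivial, so $\WFgs\big(\cF^{-1}(e^{i\fy_t}\wh u)\big)=\WFgs\big(\cF^{-1}(e^{-itP_m}\wh u)\big)$; (iii) compute, via \eqref{eq:WFgFourier} and the Gelfand–Shilov/anisotropic propagation machinery of \cite{Wahlberg3,Wahlberg4} for the free Hamiltonian $P_m(D)$, that $\WFgs\big(\cF^{-1}(e^{-itP_m}\wh u)\big)=\chi_t(\WFgs(u))$ when $\sigma=\frac1{m-1}$ and $=\WFgs(u)$ when $\sigma<\frac1{m-1}$; (iv) check that $M_{-tv}$ — a bounded phase-space translation in the $\xi$-direction — does not affect $\WFgs$ under $\sigma\leqs 1$; and (v) note the kernel formula \eqref{eq:schwartzkernel} justifies continuity of $\cK_t$ on $\cS'$ and the manipulations at the level of tempered distributions. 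The main obstacle I anticipate is step (ii): one must justify rigorously that the lower-order polynomial phase $\fy_t-\fy_{t,m}$, even though its derivatives only decay like $\theta_\sigma^{-1}$ per $\xi$-derivative (so the multiplier lies in $G^{0,\sigma}$ but is not itself regularizing), contributes a flow that is trivial on $\sigma$-conic sets — this requires a careful stationary-phase or conjugation argument showing the induced phase-space displacement $t\nabla(\fy_t-\fy_{t,m})(\xi)=O(|\xi|^{m-2})$ is of strictly lower anisotropic order than the scaling exponent governing $\WFgs$, hence absorbed; equivalently, one invokes that such multipliers are microlocally the identity because their symbol is noncharacteristic nowhere relevant — and getting the bookkeeping of anisotropic orders exactly right, together with the $M_{-tv}$ contribution, is the delicate point.
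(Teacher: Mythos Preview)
Your route differs from the paper's and has a genuine gap at the point you yourself flag. The paper does not decompose $\cK_t$ operator-wise. Instead it computes $\WFgs(K_t)$ of the full Schwartz kernel directly: first \cite[Theorems~7.1 and 7.2]{Rodino4} give $\WF_{\rm g}^{m-1}(e^{i\fy_t})\subseteq\{(x,-t\nabla P_m(x)):x\neq 0\}$ (the lower-order terms of $\fy_t$ drop out automatically at this stage), then \eqref{eq:WFgFourier} converts this to $\WFgs(\cF^{-1}e^{i\fy_t})$, then tensor-product and linear-coordinate-change results from \cite{Rodino4,Wahlberg4} turn \eqref{eq:schwartzkernel} into an explicit description of $\WFgs(K_t)$, and finally the kernel-composition theorem \cite[Theorem~4.4]{Wahlberg4} yields the inclusion $\WFgs(\cK_t u)\subseteq\chi_t(\WFgs(u))$. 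Equality follows from $\cK_t^{-1}=\cK_{-t}$.

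Your step (ii) is where the problem lies. The assertion $e^{i(\fy_t-\fy_{t,m})}\in G^{0,\sigma}$ is false: with $\sigma=\frac1{m-1}$ and $q=\fy_t-\fy_{t,m}$ a polynomial of degree $m-1$ in $\xi$, one has $|\partial_\xi e^{iq(\xi)}|\asymp|\xi|^{m-2}$, whereas membership in $G^{0,\sigma}$ would require $|\partial_\xi^\beta e^{iq}|\lesssim\theta_\sigma^{-\sigma|\beta|}\asymp|\xi|^{-|\beta|}$. A chirp with polynomial phase of degree $\geqs 2$ is never a zero-order Shubin symbol, so you cannot invoke the microlocal inclusions for pseudodifferential operators here. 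Your fallback suggestion---that the induced phase-space displacement $t\nabla(\fy_t-\fy_{t,m})(\xi)=O(|\xi|^{m-2})$ is of strictly lower anisotropic order and hence negligible on $\sigma$-conic sets---is the right heuristic, but turning it into a rigorous statement is exactly what the chirp wave front results \cite[Theorems~7.1 and 7.2]{Rodino4} accomplish. Once you invoke those, the splitting $\fy_t=\fy_{t,m}+(\fy_t-\fy_{t,m})$ becomes unnecessary, and you are essentially back to the paper's kernel approach. Your step (iv), that $M_{-tv}$ leaves $\WFgs$ invariant, is correct and is handled in the paper via \cite[Corollary~5.2 and Proposition~4.3~(ii)]{Rodino4}; but without a clean replacement for step (ii) the overall argument does not close.
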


\begin{proof}
By \cite[Theorems~7.1 and 7.2]{Rodino4} we have 
\begin{equation}\label{eq:WFGchirp1}
\begin{aligned}
\WF_{\rm g}^{m-1} \left( e^{i \fy_t } \right)
& \subseteq \{ \left( x, \nabla \fy_{t,m} (x) \right) \in \rr {2 d}: x \in \rr d \setminus 0 \} \\
& = \{ \left( x, - t \nabla P_{m} (x) \right) \in \rr {2 d}: x \in \rr d \setminus 0 \}, \\
\WFgs \left(e^{i \fy_t } \right) 
& \subseteq \left( \rr d \setminus 0 \right) \times \{0\}, \quad \sigma > m-1. 
\end{aligned}
\end{equation}
Combining \eqref{eq:WFGchirp1} with \cite[Eq.~(4.6) and Proposition~4.3 (i)]{Rodino4}, cf. \eqref{eq:WFgFourier}, gives 
\begin{equation}\label{eq:WFGchirp2}
\begin{aligned}
\WF_{\rm g}^{\frac1{m-1}} \left( \cF^{-1} e^{i \fy_t } \right) 
& \subseteq \{ \left( t \nabla P_m (x), x) \right) \in \rr {2d}: x \in \rr d \setminus 0 \}, \\
\WFgs \left( \cF^{-1} e^{i \fy_t } \right)
& \subseteq \{0\} \times \left( \rr d \setminus 0 \right) , \quad \sigma < \frac1{m-1}. 
\end{aligned}
\end{equation}

Now \eqref{eq:schwartzkernel}, \cite[Corollary~5.2 and Proposition~4.3~(ii)]{Rodino4}, \cite[Proposition~3.2]{Wahlberg4}, \cite[Proposition~5.3 (iii)]{Rodino4} and \eqref{eq:WFGchirp2} yield if $\sigma = \frac1{m-1}$
\begin{align*}
& \WFgs (K_t)
= \WFgs ( \left( 1 \otimes \cF^{-1} e^{i \fy_t} \right) \circ \kappa^{-1} ) \\
& = 
\left( 
\begin{array}{cc}
  \kappa & 0 \\
  0 & \kappa^{-T}
  \end{array}
\right) 
\WFgs \left( 1 \otimes \cF^{-1} e^{i \fy_t} \right) \\
& \subseteq \{ ( \kappa( x_1, x_2), \kappa^{-T}(\xi_1, \xi_2) ) \in T^* \rr {2d}: \\
& \qquad \qquad \qquad \qquad \qquad  (x_1, \xi_1) \in \WFgs (1) \cup \{ 0 \}, \  (x_2, \xi_2) \in \WFgs ( \cF^{-1} e^{i \fy_t} ) \cup \{ 0 \} \} \setminus 0 \\
& = \{ ( \kappa( x_1, t \nabla P_{m} (x_2) ), \kappa^{-T} (0, x_2) \in T^* \rr {2d}: \ x_1, x_2 \in \rr d \} \setminus 0 \\
& = \left\{ \left( x_1 +t \frac12 \nabla P_{m} (x_2), x_1 - t \frac12 \nabla P_{m} (x_2) , x_2, - x_2 \right) \in T^* \rr {2d}: \ x_1, x_2 \in \rr d \right\} \setminus 0 \\
& = \left\{ \left( x_1 + t  \nabla P_{m} (x_2), x_1, x_2, - x_2 \right) \in T^* \rr {2d}: \ x_1, x_2 \in \rr d \right\} \setminus 0. 
\end{align*}

Since $m \geqs 2$ we have $\nabla P_{m} (0) = 0$. 
Hence $\WFgs (K_t)$ does not contain points of the form $(x, 0, \xi, 0)$ nor of the form $(0, x, 0, -\xi)$ for any $(x,\xi) \in T^* \rr d \setminus 0$. 
We may therefore apply \cite[Theorem~4.4]{Wahlberg4} which gives for $u \in \cS'(\rr d)$
\begin{equation}\label{eq:propinclusion1}
\begin{aligned}
\WFgs( \cK_t u) 
& \subseteq \WFgs (K_t)' \circ \WFgs (u) \\
& = \{ (x,\xi) \in T^* \rr d: \ \exists (y,\eta) \in \WFgs (u), \ (x,y, \xi, - \eta) \in \WFgs (K_t) \} \\
& \subseteq \{ ( x_1 + t \nabla P_{m} (x_2)  , x_2) \in T^* \rr d: \ (x_1,x_2) \in \WFgs (u) \} \\
& = \chi_t  \left( \WFgs (u) \right). 
\end{aligned}
\end{equation}

Since $\cK_t^{-1} = \cK_{-t}$ and $\chi_t^{-1} = \chi_{-t}$ we may strengthen \eqref{eq:propinclusion1} into
\begin{equation*}
\WFgs( \cK_t u)  = \chi_t  \left( \WFgs (u) \right), \quad t \in \ro, \quad u \in \cS'(\rr d), \quad \sigma = \frac1{m-1}.
\end{equation*}
We have proved \eqref{eq:propagationG1}. 

Likewise if $\sigma < \frac1{m-1}$ then again 
\eqref{eq:schwartzkernel}, \cite[Corollary~5.2 and Proposition~4.3~(ii)]{Rodino4}, \cite[Proposition~3.2]{Wahlberg4}, \cite[Proposition~5.3 (iii)]{Rodino4} and \eqref{eq:WFGchirp2} yield
\begin{align*}
\WFgs (K_t)
& \subseteq \{ ( \kappa( x_1, x_2), \kappa^{-T}(\xi_1, \xi_2) ) \in T^* \rr {2 d}: \\
& \qquad \qquad \qquad \qquad (x_1, \xi_1) \in \WFgs (1) \cup \{ 0 \},  
\ (x_2, \xi_2) \in \WFgs ( \cF^{-1} e^{i \fy_t} ) \cup \{ 0 \} \} \setminus 0 \\
& \subseteq \{ ( \kappa( x_1, 0 ), \kappa^{-T}(0, x_2) \in T^* \rr {2 d}: \ x_1, x_2 \in \rr d \} \setminus 0 \\
& = \left\{ \left( x_1 , x_1, x_2, - x_2 \right) \in T^* \rr {2d}: \ x_1, x_2 \in \rr d \right\} \setminus 0. 
\end{align*}
Again \cite[Theorem~4.4]{Wahlberg4} gives
\begin{equation*}
\WFgs( \cK_t u)  = \WFgs (u), \quad t \in \ro, \quad u \in \cS'(\rr d), \quad \sigma < \frac1{m-1}, 
\end{equation*}
which proves \eqref{eq:propagationG2}. 
\end{proof}

\begin{rem}\label{rem:comparison1}
The conclusion from \eqref{eq:propagationG1} and \eqref{eq:propagationG2} is that the propagation of singularities for 
the equation \eqref{eq:airyschrodeq} works exactly 
as when $v = 0$, as described in \cite[Theorem~5.1]{Wahlberg4}. 
The Hamiltonian in \eqref{eq:airyschrodeq} is $a(x,\xi) = p(\xi) + \la v, x \ra$, 
but the propagation of singularities follows the Hamiltonian flow of $P_m(\xi)$. 
Note that $a_0(x,\xi) = P_m(\xi)$ satisfies the anisotropic homogeneity 
\begin{equation*}
a_0( \lambda x, \lambda^\sigma \xi) = \lambda^{1 + \sigma} a_0(x,\xi), 
\quad (x,\xi) \in T^* \rr d, \quad \lambda > 0,  
\end{equation*}
if $\sigma = \frac1{m-1}$, so $a_0 \in G^{1 + \sigma,\sigma}$ according to Corollary \ref{cor:homogeneoussymbol}. 

If we decompose the polynomial $p$ as 
\begin{equation*}
p(\xi) = P_m(\xi) + \sum_{j=0}^{m-1} P_j (\xi)
\end{equation*}
where each term $P_j (\xi)$ is homogeneous of degree $j$ for $0 \leqs j \leqs m$, 
then each term $P_j$ satisfies
\begin{equation*}
P_j( \lambda^\sigma \xi) = \lambda^{\frac{j}{m-1}} P_j(\xi), 
\quad \xi \in \rr d, \quad \lambda > 0,  \quad 0 \leqs j \leqs m. 
\end{equation*}
Thus $a-a_0 = \sum_{j=0}^{m-1} P_j + \la v, x \ra = \sum_{j=0}^{m-1} b_j $ with terms $b_j$, considered as functions on $(x,\xi) \in T^* \rr d$,
of homogeneities
\begin{equation}\label{eq:loworderhomog}
b_j( \lambda x, \lambda^\sigma \xi) = \lambda^{\frac{j}{m-1}} b_j(x,\xi), 
\quad (x,\xi) \in T^* \rr d, \quad \lambda > 0, \quad 0 \leqs j \leqs m-1. 
\end{equation}

The terms $b_j$ have all smaller order $\frac{j}{m-1} = j \sigma$ of anisotropic homogeneity than the principal part $a_0 = P_m$
which has order $1 + \sigma = m \sigma$, and which governs the propagation of singularities.
Thus one may see $a - a_0$ as lower order perturbations of the Hamiltonian
that do not affect propagation of singularities. 
Note that the Hamiltonian term $\la v, x \ra$ satisfies \eqref{eq:loworderhomog} with $j = m-1$. 
\end{rem}

As a complementary result we formulate a version of Theorem \ref{thm:propagationAiry1} in the framework of 
Beurling type Gelfand--Shilov spaces $\Sigma_\nu^\mu (\rr d)$ for $\nu + \mu > 1$
and their dual ultradistribution spaces $\left( \Sigma_\nu^\mu \right)'(\rr d)$. 

\begin{thm}\label{thm:propagationAiry2}
Let $p$ be a polynomial with real coefficients defined by \eqref{eq:polynomialsum}, \eqref{eq:polynomialonevar}, 
of order $m = \max_{j=1}^d \deg p_j \geqs 2$, 
with principal part $P_m$ defined by \eqref{eq:principalpart}. 
Denote the Hamilton flow of $P_m(\xi)$ as in \eqref{eq:hamiltonflow1}. 
Suppose $\cK_t: \cS (\rr d) \to \cS (\rr d)$ is the 
solution operator for the evolution equation \eqref{eq:airyschrodeq}, with Schwartz kernel \eqref{eq:schwartzkernel}
where $\fy_t$ is defined by \eqref{eq:primitivepolynomial} and \eqref{eq:phasefunction}. 

If $\nu \geqs \mu(m-1) > 1$ then $\cK_t$ is continuous on $\Sigma_\nu^\mu(\rr d)$, extends uniquely to a continuous linear operator on $\left( \Sigma_\nu^\mu \right)'(\rr d)$, and 
\begin{align}
\WF^{\nu,\mu} ( \cK_t u)  & = \chi_t  \left( \WF^{\nu,\mu} (u) \right), \quad t \in \ro, \quad u \in \left( \Sigma_\nu^\mu \right)'(\rr d), \quad \nu = \mu(m-1) > 1, \label{eq:propagationGS1} \\
\WF^{\nu,\mu} ( \cK_t u)  & = \WF^{\nu,\mu} (u), \quad t \in \ro, \quad u \in \left( \Sigma_\nu^\mu \right)'(\rr d), \quad \nu > \mu(m-1) > 1. \label{eq:propagationGS2} 
\end{align}
\end{thm}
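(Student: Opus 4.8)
The structure parallels the tempered case, Theorem \ref{thm:propagationAiry1}, but with the Gelfand--Shilov wave front set $\WF^{\nu,\mu}$ in place of $\WFgs$ and exponential rather than superpolynomial decay. First I would establish the continuity statements: that $\cK_t$ maps $\Sigma_\nu^\mu(\rr d)$ continuously into itself and extends to $\left( \Sigma_\nu^\mu \right)'(\rr d)$. From \eqref{eq:solutionoperator} we have $\cK_t f = M_{-tv}\cF^{-1}(e^{i\fy_t}\wh f)$, so modulation and (inverse) Fourier transform preserve the relevant spaces provided we work with symmetric Gelfand--Shilov indices or track the index swap under $\cF$; the only nontrivial ingredient is multiplication by the oscillation $e^{i\fy_t}$, where $\fy_t$ is a polynomial of degree $m$. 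Multiplication by $e^{i\fy_t}$ preserves $\Sigma_\nu^\mu(\rr d)$ precisely when the Gevrey-type growth it introduces is absorbed, which forces the condition $\nu \geqs \mu(m-1) > 1$ — the same balance that appears in the hypothesis. The extension to ultradistributions is then by the duality pairing $(\cK_t u, \fy) := (u, \cK_{-t}\fy)$, using $\cK_t^* = \cK_{-t}$ as established before \eqref{eq:schwartzkernel}.

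For the propagation statements I would invoke the Gelfand--Shilov analogues, from \cite{Wahlberg3}, of the kernel-theorem results used in the proof of Theorem \ref{thm:propagationAiry1}: namely the behavior of $\WF^{\nu,\mu}$ under tensor products, linear changes of variables, partial Fourier transform, and composition of kernels. The key geometric input is the microlocal location of the chirp, i.e. the Gelfand--Shilov version of \eqref{eq:WFGchirp1} and \eqref{eq:WFGchirp2}: one needs $\WF^{\mu,\nu}\left( e^{i\fy_t}\right) \subseteq \{ (x, -t\nabla P_m(x)) : x \in \rr d \setminus 0 \}$ when $\nu = \mu(m-1)$, and the analogous inclusion into $(\rr d \setminus 0) \times \{0\}$ when $\nu > \mu(m-1)$, which should follow from the results in \cite{Rodino3,Wahlberg3} on wave front sets of exponentials of polynomials (the degree-$m$ polynomial $\fy_t$ has top-order term $-tP_m$, and the $\mu/\nu$-conic scaling with $\nu/\mu = m-1$ is exactly matched to degree $m$ homogeneity of the gradient). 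Taking the inverse Fourier transform via the Gelfand--Shilov analogue of \eqref{eq:WFgFourier} swaps the indices and the coordinates, putting the singularity at $\{ (t\nabla P_m(x), x) : x \in \rr d \setminus 0 \}$.

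With the chirp located, the computation of $\WF^{\nu,\mu}(K_t)$ proceeds exactly as the displayed chain in the proof of Theorem \ref{thm:propagationAiry1}: write $K_t = (2\pi)^{-d/2} e^{-it\la x,v\ra}\left( 1 \otimes \cF^{-1} e^{i\fy_t}\right)\circ\kappa^{-1}$, note that multiplication by the unimodular $e^{-it\la x,v\ra}$ does not affect the wave front set (being itself a low-order chirp, or directly by the calculus), and push the tensor-product wave front set through the linear map $\mathrm{diag}(\kappa,\kappa^{-T})$. This yields, when $\nu = \mu(m-1)$,
\[
\WF^{\nu,\mu}(K_t) \subseteq \left\{ \left( x_1 + t\nabla P_m(x_2), x_1, x_2, -x_2\right) : x_1, x_2 \in \rr d \right\} \setminus 0,
\]
and the smaller set with $t\nabla P_m(x_2)$ replaced by $0$ when $\nu > \mu(m-1)$. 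Since $\nabla P_m(0) = 0$, this kernel wave front set contains no points of the form $(x,0,\xi,0)$ or $(0,x,0,-\xi)$, so the Gelfand--Shilov composition-of-kernels theorem from \cite{Wahlberg3,Wahlberg4} applies and gives $\WF^{\nu,\mu}(\cK_t u) \subseteq \chi_t\left( \WF^{\nu,\mu}(u)\right)$. The reverse inclusion follows from $\cK_t^{-1} = \cK_{-t}$ and $\chi_t^{-1} = \chi_{-t}$, yielding \eqref{eq:propagationGS1}, and the analogous argument with the smaller kernel set yields \eqref{eq:propagationGS2}.

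\textbf{Main obstacle.} The delicate point is the continuity of multiplication by $e^{i\fy_t}$ on $\Sigma_\nu^\mu(\rr d)$ and the precise location of $\WF^{\mu,\nu}(e^{i\fy_t})$: unlike the tempered/polynomial decay setting, in the Gelfand--Shilov scale the growth rate $e^{r|x|^{1/\nu}}$ interacts sharply with a degree-$m$ polynomial phase, and one must verify that the hypothesis $\nu \geqs \mu(m-1)$ is exactly what makes the chirp an admissible multiplier and simultaneously pins its wave front set to the graph of $-t\nabla P_m$ (respectively to the $\xi = 0$ set in the strict inequality case). I expect this to be handled by citing the relevant statements in \cite{Rodino3,Wahlberg3}, but it is where the index arithmetic must be checked with care.
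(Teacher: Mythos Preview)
Your proposal is correct and follows essentially the same route as the paper for the propagation part: locate $\WF^{\nu,\mu}$ of the chirp $e^{i\fy_t}$ via the results in \cite{Wahlberg3}, push through the inverse Fourier transform via \cite{Rodino3}, compute $\WF^{\nu,\mu}(K_t)$ from the tensor-product/linear-transform calculus (the paper cites \cite[Proposition~4.5]{Wahlberg3} and \cite[Proposition~7.1~(iii)]{Rodino3}), and then apply the Gelfand--Shilov kernel theorem.

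The one genuine difference is in how you obtain the continuity of $\cK_t$ on $\Sigma_\nu^\mu(\rr d)$ and its extension to $(\Sigma_\nu^\mu)'(\rr d)$. You propose to prove this separately, by analyzing the explicit formula $\cK_t f = M_{-tv}\cF^{-1}(e^{i\fy_t}\wh f)$ and verifying that multiplication by $e^{i\fy_t}$ is a continuous endomorphism of $\Sigma_\mu^\nu(\rr d)$ under the constraint $\nu \geqs \mu(m-1)$; you correctly flag this multiplier property as the delicate point. The paper instead avoids this computation entirely: once $\WF^{\nu,\mu}(K_t)$ is shown to contain no points of the form $(x,0,\xi,0)$ or $(0,x,0,-\xi)$, \cite[Theorem~5.5]{Wahlberg3} delivers \emph{both} the continuity/extension statements and the propagation inclusion in one stroke. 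So your ``main obstacle'' is not actually needed; the kernel theorem does the continuity for free. Your route would work but is longer, and the index bookkeeping for the multiplier argument on $\Sigma_\mu^\nu$ is exactly the kind of thing the kernel-theorem approach is designed to bypass.
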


\begin{proof}
In the Gelfand--Shilov functional framework we have, similar to \eqref{eq:WFGchirp1}, 
by \cite[Theorems~6.1 and 6.2]{Wahlberg3} if $\nu > \frac1{m-1}$
\begin{equation}\label{eq:WFGSchirp1}
\begin{aligned}
\WF^{\nu,\nu(m-1)} \left( e^{i \fy_t }  \right)
& \subseteq \{ \left( x, - t \nabla P_{m}(x) \right) \in \rr {2d}: x \in \rr d \setminus 0 \}, \\
\WF^{\nu,\mu} \left( e^{i \fy_t } \right)
& \subseteq \left( \rr d \setminus 0 \right) \times \{0\}, \quad \mu > \nu(m-1). 
\end{aligned}
\end{equation}

As before \cite[Eq.~(3.8) and Proposition~3.6~(i)]{Rodino3} 
give
\begin{equation}\label{eq:WFGSchirp2}
\begin{aligned}
\WF^{\nu(m-1),\nu} \left( \cF^{-1} e^{i \fy_t } \right)
& \subseteq \{ \left( t \nabla P_{m}(x), x \right) \in \rr {2d}: x \in \rr d \setminus 0 \}, \\
\WF^{\mu,\nu} \left( \cF^{-1}  e^{i \fy_t } \right)
& \subseteq \{0\} \times \left( \rr d \setminus 0 \right), \quad \mu > \nu(m-1). 
\end{aligned}
\end{equation}

From \cite[Proposition~4.5]{Wahlberg3} and \cite[Proposition~3.6~(ii), Corollary~6.4 and Proposition~7.1~(iii)]{Rodino3} 
we obtain if $\nu = \mu(m-1) > 1$ 
\begin{equation*}
\WF^{\nu,\mu} (K_t)
\subseteq \left\{ \left( x_1 + t \nabla P_{m}(x_2) , x_1, x_2, - x_2 \right) \in T^* \rr {2d}: \ x_1, x_2 \in \rr d \right\} \setminus 0,  
\end{equation*}
and if $\nu > \mu(m-1) > 1$
\begin{equation*}
\WF^{\nu,\mu} (K_t)
\subseteq \left\{ \left( x_1, x_1, x_2, - x_2 \right) \in T^* \rr {2d}: \ x_1, x_2 \in \rr d \right\} \setminus 0. 
\end{equation*}

At this point \cite[Theorem~5.5]{Wahlberg3} yields the following two final conclusions: 
If $\nu \geqs \mu(m-1) > 1$ then 
$\cK_t$ is continuous on $\Sigma_\nu^\mu(\rr d)$ and extends uniquely to a continuous linear operator on $\left( \Sigma_\nu^\mu \right)'(\rr d)$, 
and the propagation of singularities follows \eqref{eq:propagationGS1} and \eqref{eq:propagationGS2}.
\end{proof}

Again the overall conclusion is that propagation of singularities works as if $v = 0$.

\subsection{Fourier transformation of the evolution equation}\label{subsec:fouriertransform}

Next we take the Fourier transform $\cF u(t,\cdot)$. 
If we denote this Fourier transform for simplicity still by $u(t,\cdot)$, then 
we obtain from \eqref{eq:airyschrodeq} the evolution equation
\begin{equation}\label{eq:airyschrodeq2}
\left\{
\begin{array}{rl}
\partial_t u(t,x) + i \left( - \la v, D_x \ra  + p(x) \right)u (t,x) & = 0, \quad x \in \rr d, \quad t \in \ro, \\
u(0,\cdot) & = u_0, 
\end{array}
\right.
\end{equation}
where again $v \in \rr d$ is a vector with nonzero entries: $v_j \neq 0$, $1 \leqs j \leqs d$. 

Referring to \eqref{eq:phasefunction} and \eqref{eq:solutionoperator} the solution is now for $u_0 \in \cS(\rr d)$
\begin{equation}\label{eq:solution2}
\begin{aligned}
u(t,x)
& = \wt \cK_t u_0
= \cF  \cK_t \cF^{-1} u_0
& = T_{-t v} \left(  e^{i \fy_t }  u_0 \right) (x) \\
& = e^{i \fy_t (x + t v)}  u_0 (x + t v) \\
& = e^{ - i \fy_{-t} (x)}  u_0 (x + t v). 
\end{aligned}
\end{equation}
The solution operator $\wt \cK_t$ is continuous on $\cS(\rr d)$ and extends to a continuous operator on $\cS'(\rr d)$. 
Now \eqref{eq:propagationG1} combined with \cite[Proposition~4.3~(i)]{Rodino4} give
\begin{equation}\label{eq:propagation3}
\WFgs( \wt \cK_t u_0 )  = \wt \chi_t  \left( \WFgs (u_0) \right), \quad t \in \ro, \quad u_0 \in \cS'(\rr d), \quad \sigma = m-1, 
\end{equation}
where
\begin{equation}\label{eq:hamiltonflow2}
\wt \chi_t (x, \xi)
= \J \chi_t (-\J)(x,\xi)
= (x, \xi - t \nabla P_{m} (x)), \quad t \in \ro, \quad (x, \xi) \in T^* \rr d \setminus 0, 
\end{equation}
and $\chi_t$ is defined by \eqref{eq:hamiltonflow1}. 
This is the Hamilton flow corresponding to the principal part $P_m(x)$ of the polynomial $p(x)$. 
We also obtain
\begin{equation}\label{eq:propagation4}
\WFgs( \wt \cK_t u_0 )  = \WFgs (u_0), \quad t \in \ro, \quad u \in \cS'(\rr d), \quad \sigma > m-1.
\end{equation}

These considerations, combined with a similar discussion in the Gelfand--Shilov framework, may be summarized as follows. 

\begin{thm}\label{thm:propagationAiry3}
Let $p$ be a polynomial with real coefficients defined by \eqref{eq:polynomialsum}, \eqref{eq:polynomialonevar}, 
of order $m = \max_{j=1}^d \deg p_j \geqs 2$, 
with principal part $P_m$ defined by \eqref{eq:principalpart}. 
Denote the Hamilton flow of $P_m(x)$ as in \eqref{eq:hamiltonflow2}. 
Suppose $\wt \cK_t: \cS (\rr d) \to \cS (\rr d)$ is the 
solution operator \eqref{eq:solution2},
where $\fy_t$ is defined by \eqref{eq:primitivepolynomial} and \eqref{eq:phasefunction}, 
for the evolution equation \eqref{eq:airyschrodeq2}. 
Then
\begin{align*}
\WFgs( \wt \cK_t u )  & = \wt \chi_t  \left( \WFgs (u) \right), \quad t \in \ro, \quad u \in \cS'(\rr d), \quad \sigma = m-1, \\
\WFgs( \wt \cK_t u )  & = \WFgs (u), \quad t \in \ro, \quad u \in \cS'(\rr d), \quad \sigma > m-1.
\end{align*}

If $\nu \geqs \mu(m-1) > 1$ then $\wt \cK_t$ is continuous on $\Sigma_\mu^\nu(\rr d)$, extends uniquely to a continuous linear operator on $\left( \Sigma_\mu^\nu \right)'(\rr d)$, and 
\begin{align*}
\WF^{\mu,\nu} ( \wt \cK_t u)  & = \wt \chi_t  \left( \WF^{\mu,\nu} (u) \right), \quad t \in \ro, \quad u \in \left( \Sigma_\mu^\nu \right)'(\rr d), \quad \nu = \mu(m-1) > 1, \\
\WF^{\mu,\nu} ( \wt \cK_t u)  & = \WF^{\mu,\nu} (u), \quad t \in \ro, \quad u \in \left( \Sigma_\mu^\nu \right)'(\rr d), \quad \nu > \mu(m-1) > 1. 
\end{align*}
\end{thm}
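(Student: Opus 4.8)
The plan is to deduce Theorem~\ref{thm:propagationAiry3} from Theorems~\ref{thm:propagationAiry1} and~\ref{thm:propagationAiry2} by conjugating with the Fourier transform, using the identity $\wt\cK_t = \cF \cK_t \cF^{-1}$ recorded in \eqref{eq:solution2} together with the transformation rules of the two wave front sets under $\cF$.

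For the anisotropic Gabor statements this is essentially already carried out. Since $\cF$ is a homeomorphism on $\cS(\rr d)$ and on $\cS'(\rr d)$, the continuity of $\wt\cK_t$ follows from that of $\cK_t$, as noted after \eqref{eq:solution2}, and the two propagation equalities are precisely \eqref{eq:propagation3} and \eqref{eq:propagation4}. Spelled out: writing $g = \cK_t \cF^{-1} u$ so that $\wt\cK_t u = \wh g$, one applies \eqref{eq:WFgFourier} to $\wh g$, then \eqref{eq:propagationG1} (resp. \eqref{eq:propagationG2}) with exponent $\tfrac1\sigma$ to $g$, and \eqref{eq:WFgFourier} once more to the datum $u = \wh{\cF^{-1}u}$; combining the three steps and using $\J^{-1} = -\J$ gives $\WFgs(\wt\cK_t u) = (\J \chi_t \J^{-1})(\WFgs(u)) = \wt\chi_t(\WFgs(u))$ with $\wt\chi_t$ as in \eqref{eq:hamiltonflow2}. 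The exponent $\sigma = \tfrac1{m-1}$ in Theorem~\ref{thm:propagationAiry1} is thereby turned into $\sigma = m-1$ here, and $\sigma < \tfrac1{m-1}$ into $\sigma > m-1$.

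For the Gelfand--Shilov statements I would repeat the argument in the ultradistributional framework. The Fourier transform is a homeomorphism $\Sigma_\mu^\nu(\rr d) \to \Sigma_\nu^\mu(\rr d)$ and between the corresponding dual spaces, so the continuity of $\wt\cK_t$ on $\Sigma_\mu^\nu(\rr d)$ and its unique extension to $(\Sigma_\mu^\nu)'(\rr d)$ follow from the statement for $\cK_t$ on $\Sigma_\nu^\mu(\rr d)$ in Theorem~\ref{thm:propagationAiry2}, valid under the unchanged constraint $\nu \geqs \mu(m-1) > 1$. For the propagation I would combine \eqref{eq:propagationGS1} (resp. \eqref{eq:propagationGS2}) with the behavior of the Gelfand--Shilov wave front set under $\cF$, namely \cite[Proposition~3.6~(i)]{Rodino3}, which interchanges the two Gevrey indices and conjugates by $\J$; this yields $\WF^{\mu,\nu}(\wt\cK_t u) = \wt\chi_t(\WF^{\mu,\nu}(u))$ when $\nu = \mu(m-1) > 1$ and $\WF^{\mu,\nu}(\wt\cK_t u) = \WF^{\mu,\nu}(u)$ when $\nu > \mu(m-1) > 1$. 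As before the equalities rather than mere inclusions are obtained from $\wt\cK_t^{-1} = \wt\cK_{-t}$ and $\wt\chi_t^{-1} = \wt\chi_{-t}$.

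The argument is purely formal once the two transformation rules are invoked; the only point demanding attention, and hence the main, if minor, obstacle, is the book-keeping of the indices and of the symplectic matrix $\J$. One must verify that $\cF$ sends the Hamilton flow $\chi_t$ of $P_m(\xi)$ to the flow $\wt\chi_t = \J\chi_t\J^{-1}$ of $P_m(x)$ given in \eqref{eq:hamiltonflow2}, and that the exponent pair $\sigma \leftrightarrow \tfrac1{m-1}$ and the index pair $(\nu,\mu) \leftrightarrow (\mu,\nu)$ are interchanged in the manner the statement requires, while the inequality $\nu \geqs \mu(m-1) > 1$ stays invariant.
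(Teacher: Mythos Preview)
Your proposal is correct and follows exactly the approach the paper takes: the paper derives the Gabor statements as \eqref{eq:propagation3} and \eqref{eq:propagation4} from Theorem~\ref{thm:propagationAiry1} via $\wt\cK_t = \cF\cK_t\cF^{-1}$ and \eqref{eq:WFgFourier}, and then simply remarks that ``a similar discussion in the Gelfand--Shilov framework'' (using \cite[Proposition~3.6~(i)]{Rodino3} and Theorem~\ref{thm:propagationAiry2}) gives the rest. Your write-up is in fact more explicit than the paper's, which leaves the Gelfand--Shilov half as a one-line comment.
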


The conclusion from Theorem \ref{thm:propagationAiry3} is that the propagation of singularities for 
\eqref{eq:airyschrodeq2} works again exactly as when $v = 0$, 
in both the tempered Schwartz distribution and the Gelfand--Shilov ultradistribution frameworks, respectively.

\begin{rem}\label{rem:homogenhamilton}
Consider the Hamiltonian $a(x,\xi) = p(x) - \la v, \xi \ra$ in the equation \eqref{eq:airyschrodeq2}, with $\deg p = m$. 
The propagation of $\WFgs$ with $\sigma = m-1$ is governed by $a_0(x,\xi) = P_m(x)$
which satisfies 
\begin{equation*}
a_0( \lambda x, \lambda^\sigma \xi) 
= \lambda^{m} P_m(x)
= \lambda^{1+\sigma} a_0(x,\xi), 
\quad (x,\xi) \in T^* \rr d, \quad \lambda > 0.
\end{equation*}
Thus $a_0 \in G^{1 + \sigma,\sigma} = G^{m,m-1}$ according to Corollary \ref{cor:homogeneoussymbol}. 
This is similar to Remark \ref{rem:comparison1}. 
In Section \ref{sec:hamiltonflow} we will study more general Hamiltonians that satisfy this type of anisotropic homogeneity. 
\end{rem}

When the Hamiltonian Weyl symbol $a(x,\xi) = p(\xi)$ is a polynomial in $\xi$ of the form \eqref{eq:polynomialsum} 
then the Hamilton flow is as in \eqref{eq:hamiltonflow1} that is
\begin{equation}\label{eq:hamiltonflow1a}
\chi_t (x, \xi)
= (x + t \nabla P_{m} (\xi), \xi), \quad t \in \ro, \quad (x, \xi) \in T^* \rr d \setminus 0, 
\end{equation}
where $P_m$ is the principal part of $p$. 
When instead the Weyl symbol depends on $x$, $a(x,\xi) = p(x)$, with the same assumptions on $p$, 
we obtain the Hamilton flow \eqref{eq:hamiltonflow2} that is 
\begin{equation}\label{eq:hamiltonflow2a}
\chi_t (x, \xi)
= (x, \xi - t \nabla P_{m} (x)), \quad t \in \ro, \quad (x, \xi) \in T^* \rr d \setminus 0. 
\end{equation}

Define for $\sigma > 0$ the anisotropic scaling map $\Lambda_\sigma (\lambda): T^* \rr d \to T^* \rr d$ as 
\begin{equation}\label{eq:anisotropicscaling}
\Lambda_\sigma (\lambda) (x,\xi) = (\lambda x, \lambda^\sigma \xi), \quad (x,\xi) \in T^* \rr d, \quad \lambda > 0. 
\end{equation}

For suitable $\sigma > 0$ the Hamilton flows \eqref{eq:hamiltonflow1a} and \eqref{eq:hamiltonflow2a} commute with $\Lambda_\sigma (\lambda)$ for all $\lambda > 0$. 
In fact, if $\chi_t$ is defined by \eqref{eq:hamiltonflow1a} and $\sigma = \frac1{m-1}$ then for $\lambda > 0$
\begin{equation*}
\chi_t ( \lambda x, \lambda^\sigma \xi)
= (\lambda x + t \nabla P_{m} (\lambda^\sigma \xi), \lambda^\sigma \xi)
= ( \lambda (x + t \nabla P_{m} (\xi)), \lambda^\sigma \xi ) 
= \Lambda_\sigma (\lambda) \chi_t(x,\xi). 
\end{equation*}
Likewise if $\chi_t$ is defined by \eqref{eq:hamiltonflow2a} and $\sigma = m-1$ then for $\lambda > 0$
\begin{equation*}
\chi_t ( \lambda x, \lambda^\sigma \xi)
= (\lambda x,  \lambda^\sigma \xi - t \nabla P_{m} ( \lambda x))
= ( \lambda x, \lambda^\sigma (\xi - t \nabla P_{m} ( x) ) ) 
= \Lambda_\sigma (\lambda) \chi_t(x,\xi). 
\end{equation*}

Thus in both cases the Hamilton flow $\chi_t$ commutes with anisotropic scaling $\Lambda_\sigma$
\begin{equation}\label{eq:commutativity}
\chi_t \Lambda_\sigma (\lambda) = \Lambda_\sigma (\lambda) \chi_t, \quad \lambda > 0, \quad t \in \ro, 
\end{equation}
suppressing the variables $(x,\xi) \in T^* \rr d \setminus 0$. 

\begin{rem}\label{rem:consistency}
The commutativity \eqref{eq:commutativity} means that the considered Hamilton flows are consistent with the propagation inclusion that we 
aim for, namely
\begin{equation}\label{eq:propagationgeneric1}
\WFgs( \cK_t u)  \subseteq \chi_t  \left( \WFgs (u) \right), \quad t \in \ro, \quad u \in \cS'(\rr d), 
\end{equation}
for the solution operator (propagator) $\cK_t$ of a Schr\"odinger type evolution equation.  

Indeed the inclusion \eqref{eq:propagationgeneric1} requires that the image of $\chi_t$ of any $\WFgs (u) \subseteq T^* \rr d \setminus 0$
for $u \in \cS'(\rr d)$
contains a closed $\sigma$-conic subset of $T^* \rr d \setminus 0$. 
It is not known if for any closed $\sigma$-conic subset of $\Gamma \subseteq T^* \rr d \setminus 0$ there exists $u \in \cS'(\rr d)$ such that $\WFgs (u) = \Gamma$ except when $\sigma = 1$. 
In fact if $\sigma = 1$ then \cite[Theorem~6.1]{Schulz1} answers the question affirmatively. 
Nevertheless it seems reasonable to ask that the image of $\chi_t$ of any closed $\sigma$-conic subset of $T^* \rr d \setminus 0$
contains a closed $\sigma$-conic subset of $T^* \rr d \setminus 0$. 
Then in particular a $\sigma$-conic curve of the form
\begin{equation*}
R_{x,\xi} = \{ (\lambda x, \lambda^\sigma \xi) \in T^* \rr d \setminus 0, \ \lambda > 0 \}
\end{equation*}
where $(x,\xi) \in \sr {2d-1}$, must be mapped into another such curve, that is, 
$\chi_t R_{x,\xi} = R_{z}$ where $z \in \sr {2d-1}$. 
\end{rem}

\section{Anisotropically homogeneous Hamiltonians and their flows}\label{sec:hamiltonflow}

Given a Hamiltonian $a: \rr {2d} \setminus 0 \to \ro$ of class $C^\infty$, Hamilton's system of equations is 
\begin{equation}\label{eq:hamilton}
\left\{
\begin{array}{l}
x' (t) =  \nabla_{\xi} a \left( x(t), \xi(t) \right), \\
\xi' (t) =  -\nabla_{x} a \left( x(t), \xi(t) \right), \\
x(0) =  x, \\
\xi(0) =  \xi,
\end{array}
\right.
\end{equation}
for initial datum $(x,\xi) \in T^* \rr d \setminus 0$ and $t \in (-T,T)$ with $T > 0$. 
By the Picard--Lindel\"of theorem there is a unique solution $( x(t), \xi(t) ) = \chi_t(x,\xi)$, $\chi_t: \rr {2d} \setminus 0 \to \rr {2d} \setminus 0$, $t \in (-T,T)$, for some $T > 0$. It is called the Hamiltonian flow. 
In general the maximal $T$ depends on $(x,\xi)$. 
The map $(-T,T) \ni t \to \chi_t$ satisfies $\chi_{t_1+t_2} = \chi_{t_1} \chi_{t_2}$ and $\chi_t^{-1} = \chi_{-t}$ \cite{Arnold1}. 
The solution $\chi_t$ is a symplectomorphism on $T^* \rr d$ for fixed $t \in (-T,T)$ \cite{deGosson1}, $C^1$ with respect to $t$, and hence a $C^1$ diffeomorphism on $T^* \rr d$. 
If the level sets of $a$ are compact then the solution $\chi_t(x,\xi)$ extends to all $t \in \ro$ \cite{Arnold1}. 
Using the matrix \eqref{eq:Jdef} we may write the differential equation in \eqref{eq:hamilton} as 
\begin{equation}\label{eq:hamiltoncompact}
\begin{aligned}
\left( 
\begin{array}{l}
x' (t)  \\
\xi'(t)
\end{array}
\right)
& = \J \nabla_{x,\xi} a \left( x(t), \xi(t) \right). 
\end{aligned}
\end{equation}

Suppose the solution $\chi_t: \rr {2d} \setminus 0 \to \rr {2d} \setminus 0$ is well defined for $t \in (-T,T)$ with the parameter $T > 0$ 
valid for all initial data $(x,\xi) \in T^* \rr d \setminus 0$. 
The assumption $a \in C^\infty(\rr {2d} \setminus 0)$ and \cite[Theorem~V.4.1]{Hartman1} imply that 
\begin{equation}\label{eq:smoothflow}
(-T,T) \times \rr {2d} \setminus 0 \ni (t, x,\xi) \mapsto \pdd {x} {\alpha} \pdd {\xi} {\beta} \chi_t (x,\xi) \in \rr {2d} \setminus 0 \in C ((-T,T) \times \rr {2d} \setminus 0) \quad \forall \alpha, \beta \in \nn d,
\end{equation}
and in particular $\chi_t \in C^\infty( \rr {2d} \setminus 0, \rr {2d} \setminus 0)$ 
for each $t \in (-T,T)$. 

The next lemma will be used in the proofs of Proposition \ref{prop:homogeneoushamiltonian} and its converse Proposition \ref{prop:scaleinvariancehamiltonian}. 

\begin{lem}\label{lem:gradientanisotropic}
If $\sigma > 0$ and $a \in C^\infty(\rr {2d} \setminus 0)$ is real-valued
then 
\begin{equation}\label{eq:anisohomHamilt0}
a( \lambda x, \lambda^\sigma \xi) = \lambda^{\sigma+1} a(x,\xi), \quad (x,\xi) \in T^* \rr d \setminus 0, \quad \lambda > 0, 
\end{equation}
holds if and only if 
\begin{align}
\lim_{(x,\xi) \to (0,0)} a(x,\xi) & = 0, \label{eq:limitzero} \\
\nabla_x a ( \lambda x, \lambda^\sigma \xi) 
& = \lambda^{\sigma} \nabla_x a ( x, \xi), \quad (x,\xi) \in T^* \rr d \setminus 0, \quad  \lambda > 0, \quad \mbox{and} \label{eq:gradientaniso1} \\
\nabla_\xi a ( \lambda x, \lambda^\sigma \xi) 
& = \lambda \nabla_\xi a ( x, \xi), \quad (x,\xi) \in T^* \rr d \setminus 0, \quad  \lambda > 0, \label{eq:gradientaniso2}
\end{align}
hold. 
\end{lem}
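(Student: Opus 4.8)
The plan is to prove the two implications separately; the forward direction reduces to differentiating the homogeneity identity, while the converse is an ODE argument along the anisotropic dilation orbits $s \mapsto (e^s x, e^{\sigma s}\xi)$.

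For the direction \eqref{eq:anisohomHamilt0} $\Rightarrow$ \eqref{eq:limitzero}--\eqref{eq:gradientaniso2}, I would first obtain \eqref{eq:gradientaniso1} and \eqref{eq:gradientaniso2} by fixing $\lambda > 0$ and applying $\nabla_x$, respectively $\nabla_\xi$, to the identity $a(\lambda x,\lambda^\sigma \xi) = \lambda^{\sigma+1}a(x,\xi)$ via the chain rule; the factors $\lambda$ and $\lambda^\sigma$ produced on the left cancel against one power of $\lambda^{\sigma+1}$ on the right, leaving exactly \eqref{eq:gradientaniso1} and \eqref{eq:gradientaniso2}. For \eqref{eq:limitzero} I would use that every $(y,\eta) \in \rr{2d}\setminus 0$ can be written as $(\lambda\omega, \lambda^\sigma\omega')$ with $(\omega,\omega') \in \sr{2d-1}$ and $\lambda > 0$ (cf. \cite[Section~3]{Rodino4}); since $(\omega,\omega') \mapsto |\omega| + |\omega'|^{1/\sigma}$ is continuous and strictly positive on the compact sphere, it is bounded below there by some $c > 0$, so $|y| + |\eta|^{1/\sigma} = \lambda(|\omega| + |\omega'|^{1/\sigma}) \geqs c\lambda$ forces $\lambda \to 0$ as $(y,\eta) \to 0$, and then $|a(y,\eta)| = \lambda^{\sigma+1}|a(\omega,\omega')| \leqs \lambda^{\sigma+1}\sup_{\sr{2d-1}}|a| \to 0$ because $\sigma + 1 > 0$ and $a$ is continuous on the sphere.

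For the converse, fix $(x,\xi) \in T^*\rr d \setminus 0$ and set $k(s) = a(e^s x, e^{\sigma s}\xi)$, $s \in \ro$, which is well defined and $C^1$ since $(e^s x, e^{\sigma s}\xi) \neq (0,0)$. Differentiating and then inserting \eqref{eq:gradientaniso1} and \eqref{eq:gradientaniso2} with $\lambda = e^s$ gives $k'(s) = e^{(\sigma+1)s}E(x,\xi)$, where $E(x,\xi) = \langle x, \nabla_x a(x,\xi)\rangle + \sigma\langle\xi, \nabla_\xi a(x,\xi)\rangle$ does not depend on $s$. Integrating from $s_0$ to $s$ and letting $s_0 \to -\infty$, using $e^{(\sigma+1)s_0} \to 0$ and $k(s_0) \to 0$ by \eqref{eq:limitzero} (since $(e^{s_0}x, e^{\sigma s_0}\xi) \to (0,0)$), yields $k(s) = \frac{E(x,\xi)}{\sigma+1}e^{(\sigma+1)s}$. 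Evaluating at $s = 0$ identifies $E(x,\xi) = (\sigma+1)a(x,\xi)$, hence $k(s) = e^{(\sigma+1)s}a(x,\xi)$, and the substitution $\lambda = e^s$ gives \eqref{eq:anisohomHamilt0}.

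The only subtle point is the converse direction's elimination of the integration constant via \eqref{eq:limitzero}: this is precisely where the hypothesis on the limit at the origin is indispensable, and where $\sigma + 1 > 0$ is used to ensure $e^{(\sigma+1)s_0} \to 0$ as $s_0 \to -\infty$. Everything else is routine chain-rule bookkeeping.
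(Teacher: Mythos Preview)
Your proof is correct. The forward direction matches the paper's. For the converse, the paper takes a different route: rather than following the anisotropic orbit $s \mapsto (e^s x, e^{\sigma s}\xi)$, it integrates along the \emph{isotropic} ray $t \mapsto (tx, t\xi)$ from $\ep$ to $1$, writes $a(\lambda x, \lambda^\sigma \xi)$ via the fundamental theorem of calculus applied at the point $(\lambda x,\lambda^\sigma\xi)$, and then uses the gradient identities with base point $(tx, t\xi)$ and scaling parameter $\lambda$ (noting that $(t\lambda x, t\lambda^\sigma\xi) = (\lambda(tx), \lambda^\sigma(t\xi))$) to extract an overall factor $\lambda^{\sigma+1}$; the limit $\ep \to 0^+$ then invokes \eqref{eq:limitzero} to kill the boundary terms. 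Your anisotropic-orbit ODE is arguably more direct: the curve is adapted to the symmetry, so $k'(s)$ factors immediately as $e^{(\sigma+1)s}$ times a function independent of $s$, and you obtain the anisotropic Euler identity $\langle x,\nabla_x a\rangle + \sigma\langle \xi,\nabla_\xi a\rangle = (\sigma+1)a$ as a byproduct. The paper's argument bypasses that identity but requires spotting the factorization above. Both approaches lean on \eqref{eq:limitzero} in the same essential way, to eliminate the boundary contribution near the origin.
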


\begin{proof}
It is immediate to see that \eqref{eq:anisohomHamilt0} implies 
\eqref{eq:gradientaniso1} and \eqref{eq:gradientaniso2}. 
Since any $(y,\eta) \in T^* \rr d \setminus 0$ can be written as $(y,\eta) = (\lambda x, \lambda^\sigma \xi)$
for a unique $\lambda > 0$ and a unique $(x,\xi) \in \sr {2d-1}$ \cite[Section~3]{Rodino4}, also 
\eqref{eq:limitzero} follows from \eqref{eq:anisohomHamilt0}. 

Assume on the other hand \eqref{eq:limitzero}, \eqref{eq:gradientaniso1} and \eqref{eq:gradientaniso2}. 
Let $(x,\xi) \in T^* \rr d \setminus 0$ and define the function $f(t) = a (t x, t \xi)$ for $t > 0$. 
Then we have for $0 < \ep < 1$
\begin{align*}
a(x,\xi) = f(1) = \int_{\ep}^1 f'(t) \, \dd t + f (\ep)
= \int_{\ep}^1 \la \nabla_{x,\xi} a(t (x,\xi)), (x,\xi) \ra \, \dd t + a ( \ep( x, \xi ))
\end{align*}
which gives for $\lambda > 0$, using \eqref{eq:gradientaniso1} and \eqref{eq:gradientaniso2}, 
\begin{align*}
a( \lambda x, \lambda^\sigma \xi) 
& = \int_{\ep}^1 \la \nabla_{x,\xi} a(t ( \lambda x, \lambda^\sigma \xi)), ( \lambda x, \lambda^\sigma \xi) \ra \, \dd t + a ( \ep( \lambda x, \lambda^\sigma \xi )) \\
& = \lambda^{\sigma+1} \int_{\ep}^1 \la \nabla_{x,\xi} a(t (x, \xi)), ( x, \xi) \ra \, \dd t + a ( \ep( \lambda x, \lambda^\sigma \xi )) \\
& = \lambda^{\sigma+1} \Big( a(x,\xi) - a ( \ep( x, \xi )) \Big) + a ( \ep( \lambda x, \lambda^\sigma \xi )). 
\end{align*}
The claim \eqref{eq:anisohomHamilt0} now follows from the limit as $\ep \to 0^+$ using the assumption \eqref{eq:limitzero}. 
\end{proof}

In the following result we show that the Hamilton flow commutes with anisotropic scaling for 
Hamiltonians with the anisotropic homogeneity \eqref{eq:anisohomHamilt0}. 

\begin{prop}\label{prop:homogeneoushamiltonian}
Let $\sigma > 0$, 
and suppose $a \in C^\infty(\rr {2d} \setminus 0)$ is real-valued and satisfies
\begin{equation}\label{eq:anisohomHamilt}
a( \lambda x, \lambda^\sigma \xi) = \lambda^{\sigma+1} a(x,\xi), \quad (x,\xi) \in T^* \rr d \setminus 0, \quad \lambda > 0. 
\end{equation}
Then there exists $T > 0$ such that the Hamilton flow $\chi_t(x,\xi)$ defined by the function $a$ 
is well defined for $t \in [-T,T]$ uniformly for all $(x,\xi) \in T^* \rr d \setminus 0$, and $\chi_t$
satisfies
\begin{equation}\label{eq:anisohomHamiltonflow}
\chi_t(  \Lambda_\sigma (\lambda) (x,\xi) ) 
= \Lambda_\sigma (\lambda) \chi_t (x,\xi), \quad \lambda > 0, \quad (x,\xi) \in T^* \rr d \setminus 0, \quad t \in [-T,T],  
\end{equation}
where $\Lambda_\sigma (\lambda): T^* \rr d \to T^* \rr d$ is defined in \eqref{eq:anisotropicscaling}. 
\end{prop}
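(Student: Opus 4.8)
The plan is to combine Lemma \ref{lem:gradientanisotropic} with the scaling structure of Hamilton's equation and a compactness argument on the unit sphere $\sr{2d-1}$.

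First I would record the consequences of \eqref{eq:anisohomHamilt}: by Lemma \ref{lem:gradientanisotropic} the gradient fields satisfy $\nabla_x a(\lambda x,\lambda^\sigma\xi)=\lambda^\sigma\nabla_x a(x,\xi)$ and $\nabla_\xi a(\lambda x,\lambda^\sigma\xi)=\lambda\nabla_\xi a(x,\xi)$ for all $\lambda>0$ and $(x,\xi)\in T^*\rr d\setminus 0$. The key computation is then as follows: if $t\mapsto(x(t),\xi(t))=\chi_t(x,\xi)$ solves \eqref{eq:hamilton} on its interval of existence and $\lambda>0$, set $(y(t),\eta(t))=\Lambda_\sigma(\lambda)(x(t),\xi(t))=(\lambda x(t),\lambda^\sigma\xi(t))$. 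Using the two homogeneities above one checks $y'(t)=\lambda\nabla_\xi a(x(t),\xi(t))=\nabla_\xi a(y(t),\eta(t))$ and $\eta'(t)=-\lambda^\sigma\nabla_x a(x(t),\xi(t))=-\nabla_x a(y(t),\eta(t))$, so that $(y(t),\eta(t))$ solves Hamilton's equation with initial datum $\Lambda_\sigma(\lambda)(x,\xi)$; moreover $(y(t),\eta(t))$ stays in $\rr{2d}\setminus 0$ because $\Lambda_\sigma(\lambda)$ preserves $\rr{2d}\setminus 0$.

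Next I would establish the uniform existence time. Since $a\in C^\infty(\rr{2d}\setminus 0)$, the vector field $\J\nabla_{x,\xi}a$ of \eqref{eq:hamiltoncompact} is bounded on the compact annulus $K=\{(x,\xi)\in\rr{2d}:\ \tfrac12\leqs|(x,\xi)|\leqs 2\}\subseteq\rr{2d}\setminus 0$, say by $M$. For an initial datum on $\sr{2d-1}$, as long as the solution remains in $K$ one has $\big||(x(t),\xi(t))|-1\big|\leqs M|t|$, so the solution stays in $K$ for $|t|\leqs T:=1/(2M)$; by the continuation theorem for ODEs the maximal interval of existence therefore contains $[-T,T]$, uniformly for all data in $\sr{2d-1}$. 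For an arbitrary $(x,\xi)\in T^*\rr d\setminus 0$ I write $(x,\xi)=\Lambda_\sigma(\lambda)(x_0,\xi_0)$ with a unique $\lambda>0$ and $(x_0,\xi_0)\in\sr{2d-1}$ (cf. \cite[Section~3]{Rodino4}); the explicit curve $t\mapsto\Lambda_\sigma(\lambda)\chi_t(x_0,\xi_0)$ from the key computation is then a solution of Hamilton's equation on $[-T,T]$ lying in $\rr{2d}\setminus 0$, so again by the continuation theorem $\chi_t(x,\xi)$ is defined for $|t|\leqs T$, and by uniqueness it equals $\Lambda_\sigma(\lambda)\chi_t(x_0,\xi_0)$. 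This gives the uniform existence claim, and applying uniqueness of solutions to the two curves $t\mapsto\chi_t(\Lambda_\sigma(\lambda)(x,\xi))$ and $t\mapsto\Lambda_\sigma(\lambda)\chi_t(x,\xi)$, which share the initial datum $\Lambda_\sigma(\lambda)(x,\xi)$ and both solve \eqref{eq:hamilton} on $[-T,T]$, yields \eqref{eq:anisohomHamiltonflow}.

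The step requiring the most care is the uniform-in-$(x,\xi)$ lower bound on the existence time, together with the need to keep the flow away from the origin, where $a$ is undefined; the annulus-plus-continuation argument combined with the scaling reduction to the compact sphere is precisely what handles this. The scaling identity itself is then a short and essentially formal consequence of the gradient homogeneities and uniqueness of ODE solutions.
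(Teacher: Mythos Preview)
Your proposal is correct and follows essentially the same route as the paper's proof: derive the gradient homogeneities from Lemma~\ref{lem:gradientanisotropic}, obtain a uniform existence time for initial data on $\sr{2d-1}$ via a compactness bound on the Hamiltonian vector field, extend to arbitrary initial data by anisotropic scaling, and conclude \eqref{eq:anisohomHamiltonflow} from uniqueness of ODE solutions. The only cosmetic difference is that you bound the vector field on an annulus while the paper works on a punctured ball and invokes Picard--Lindel\"of directly; your formulation is arguably slightly cleaner in keeping the flow visibly away from the origin.
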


\begin{proof}
The assumption \eqref{eq:anisohomHamilt} and Lemma \ref{lem:gradientanisotropic} give the anisotropic homogeneities
\begin{equation*}
\begin{aligned}
\nabla_x a ( \lambda x, \lambda^\sigma \xi) 
& = \lambda^{\sigma} \nabla_x a ( x, \xi), \\
\nabla_\xi a ( \lambda x, \lambda^\sigma \xi) 
& = \lambda \nabla_\xi a ( x, \xi), \quad (x,\xi) \in T^* \rr d \setminus 0, \quad  \lambda > 0, 
\end{aligned}
\end{equation*}
which can be written as
\begin{equation}\label{eq:anisogradient}
\nabla_{x,\xi} a ( \lambda x, \lambda^\sigma \xi) 
 = \Lambda_{\frac1\sigma} (\lambda^{\sigma}) \nabla_{x,\xi} a ( x, \xi). 
\end{equation}

This gives $\lim_{(x,\xi) \to (0,0)} \nabla_{x,\xi} a(x,\xi) = 0$. 
Set
\begin{equation*}
M = \sup_{0 < |(x,\xi)| \leqs \frac32} | \nabla_{x,\xi} a(x,\xi)| < + \infty. 
\end{equation*}
If $(x,\xi) \in \sr {2d-1}$ then by the Picard--Lindel\"of theorem \cite[Theorem~II.1.1]{Hartman1}
the Hamilton flow stays in the ball $\chi_t(x,\xi) \in \overline{\rB_{\frac12}(x,\xi)}$ if $-T \leqs t \leqs T$
and $T = \frac1{2M}$. 
Thus there exists $T > 0$ such that the Hamilton flow $\chi_t: \sr {2d-1} \to \rr {2d} \setminus 0$ is well defined for
$-T \leqs t \leqs T$ uniformly over $\sr {2d-1}$. 

Let $(x,\xi) \in T^* \rr d \setminus 0$ and set $(x(t), \xi(t) ) = \chi_t(x,\xi)$.
For $T_0 > 0$ sufficiently small we have $( x(t), \xi(t) ) \in T^* \rr d \setminus 0$ for $t \in [-T_0,T_0]$. 
From \eqref{eq:hamiltoncompact}, \eqref{eq:anisogradient} and 
\begin{equation}\label{eq:Jcommutator}
\J \Lambda_{\frac1\sigma} (\lambda^{-\sigma}) = \Lambda_{\sigma} (\lambda^{-1}) \J
\end{equation}
we obtain
\begin{align*}
\frac{d}{dt} \chi_t(x,\xi) 
= 
\left( 
\begin{array}{l}
x' (t)  \\
\xi'(t)
\end{array}
\right)
& = \J \nabla_{x,\xi} a \left( x(t), \xi(t) \right) \\
& =  \J \Lambda_{\frac1\sigma} (\lambda^{-\sigma}) \nabla_{x,\xi} a \left( \lambda x(t), \lambda^\sigma \xi(t) \right) \\
& =  \Lambda_{\sigma} (\lambda^{-1}) \J \nabla_{x,\xi} a \left( \lambda x(t), \lambda^\sigma \xi(t) \right)
\end{align*}
which may be written
\begin{equation*}
( \lambda x'(t), \lambda^\sigma \xi'(t) ) = \J \nabla_{x,\xi} a \left( \lambda x(t), \lambda^\sigma \xi(t) \right).
\end{equation*}

Thus $( \lambda x(t), \lambda^\sigma \xi(t) )$ solves \eqref{eq:hamilton} for $t \in [-T_0,T_0]$ with initial datum $(\lambda x, \lambda^\sigma \xi)$, 
for any $\lambda > 0$. If we choose $\lambda > 0$ such that $|(\lambda x, \lambda^\sigma \xi)| = 1$ then the solution is well defined for 
$t \in [-T,T]$ by the first part of the proof.
The solution $( \lambda x(t), \lambda^\sigma \xi(t) )$ hence extends to $t \in [-T,T]$ for all $\lambda > 0$. 
By the uniqueness of the solution we have $\chi_t (\lambda x, \lambda^\sigma \xi) = ( \lambda x(t), \lambda^\sigma \xi(t) )$. 
It follows that the Hamilton flow $\chi_t: \rr {2d} \setminus 0 \to \rr {2d} \setminus 0$ is well defined in the interval $t \in [-T,T]$
uniformly over the phase space $\rr {2d} \setminus 0$. 
In conclusion we have 
\begin{equation*}
\Lambda_\sigma (\lambda) \chi_t(x,\xi) = ( \lambda x(t), \lambda^\sigma \xi(t) ) = \chi_t( \lambda x, \lambda^\sigma \xi) = \chi_t ( \Lambda_\sigma(\lambda) (x,\xi) )
\end{equation*}
for $(x,\xi) \in T^* \rr d \setminus 0$, $\lambda > 0$ and $t \in [-T,T]$. 
\end{proof}

\begin{rem}\label{rem:limitzero}
With the assumptions of Proposition \ref{prop:homogeneoushamiltonian}, 
for any $t \in [-T,T]$ we have 
\begin{equation*}
\lim_{(x,\xi) \to (0,0)} \chi_t (x,\xi)  = 0.  
\end{equation*}
In fact this is an immediate consequence of \eqref{eq:anisohomHamiltonflow}. 
So defining $\chi_t(0,0) = (0,0)$ we could extend the Hamilton flow 
as a continuous bijection $\chi_t: \rr {2d} \to \rr {2d}$ for $t \in [-T,T]$. 
By \cite[Theorem~V.4.1]{Hartman1} we know that 
$\chi_t \in C^\infty( \rr {2d} \setminus 0, \rr {2d} \setminus 0)$
but we cannot extend the smoothness to the new domain point $(0,0)$. 
\end{rem}

Next we show a converse of Proposition \ref{prop:homogeneoushamiltonian}. 

\begin{prop}\label{prop:scaleinvariancehamiltonian}
Let $a \in C^\infty(\rr {2d} \setminus 0)$ be real-valued and suppose $\lim_{(x,\xi) \to (0,0)} a(x,\xi) = 0$. 
Suppose
the solution $\chi_t(x,\xi)$ to \eqref{eq:hamilton} is well defined for $t \in [-T,T]$ for some $T > 0$
for all $(x,\xi) \in T^* \rr d \setminus 0$. 
If  $\sigma > 0$ and 
\eqref{eq:anisohomHamiltonflow} holds true then $a$ satisfies the homogeneity
\begin{equation}\label{eq:shomogenous1}
a( \lambda x, \lambda^\sigma \xi) = \lambda^{\sigma+1} a( x, \xi), \quad (x,\xi) \in T^* \rr d \setminus 0, \quad \lambda > 0. 
\end{equation}
\end{prop}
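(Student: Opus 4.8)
The plan is to differentiate the commutation relation \eqref{eq:anisohomHamiltonflow} in the time variable at $t = 0$, which converts the anisotropic scaling invariance of the flow into an anisotropic homogeneity of the Hamilton vector field, and then to apply Lemma \ref{lem:gradientanisotropic} in its converse direction.

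First I would fix $z = (x,\xi) \in T^* \rr d \setminus 0$ and $\lambda > 0$. Since $\Lambda_\sigma(\lambda)$ maps $T^* \rr d \setminus 0$ into itself, both sides of \eqref{eq:anisohomHamiltonflow} are defined for $t$ near $0$, and each is $C^1$ in $t$ because $\chi_t(w)$ solves \eqref{eq:hamilton} with $C^\infty$ right-hand side; moreover $\chi_0 = \mathrm{id}$. By the Hamilton equation \eqref{eq:hamiltoncompact} we have $\frac{d}{dt} \chi_t(w) = \J \nabla_{x,\xi} a( \chi_t(w))$ for every $w \in T^* \rr d \setminus 0$, hence $\frac{d}{dt} \chi_t(w)$ equals $\J \nabla_{x,\xi} a(w)$ at $t = 0$. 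Differentiating \eqref{eq:anisohomHamiltonflow} at $t = 0$, and using that $\Lambda_\sigma(\lambda)$ is a fixed linear map, I obtain
\begin{equation*}
\J \nabla_{x,\xi} a( \Lambda_\sigma(\lambda) z) = \Lambda_\sigma(\lambda) \, \J \nabla_{x,\xi} a( z).
\end{equation*}
Multiplying on the left by $\J^{-1} = -\J$ and invoking the matrix identity \eqref{eq:Jcommutator}, which we may rewrite as $\J^{-1} \Lambda_\sigma(\lambda) \J = \Lambda_{\frac1\sigma}(\lambda^\sigma)$, turns this into
\begin{equation*}
\nabla_{x,\xi} a( \lambda x, \lambda^\sigma \xi) = \Lambda_{\frac1\sigma}(\lambda^\sigma) \nabla_{x,\xi} a(x,\xi), \qquad (x,\xi) \in T^* \rr d \setminus 0, \quad \lambda > 0,
\end{equation*}
which is exactly \eqref{eq:anisogradient}, i.e. the two componentwise homogeneities \eqref{eq:gradientaniso1} and \eqref{eq:gradientaniso2}.

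It then remains to combine \eqref{eq:gradientaniso1} and \eqref{eq:gradientaniso2} with the standing hypothesis $\lim_{(x,\xi) \to (0,0)} a(x,\xi) = 0$, that is \eqref{eq:limitzero}: Lemma \ref{lem:gradientanisotropic} immediately yields \eqref{eq:anisohomHamilt0}, which is precisely the asserted homogeneity \eqref{eq:shomogenous1}. I expect no serious obstacle here; the only point deserving a word of justification is the legitimacy of differentiating the identity \eqref{eq:anisohomHamiltonflow} term by term at $t = 0$, which follows from the $C^1$ dependence of $\chi_t$ on $t$ (indeed from the joint regularity \eqref{eq:smoothflow}) together with the chain rule. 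The argument is therefore quite short, essentially a differentiation of the given symmetry.
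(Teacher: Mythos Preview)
Your proof is correct and follows essentially the same approach as the paper: both differentiate the commutation relation \eqref{eq:anisohomHamiltonflow} in $t$ using \eqref{eq:hamiltoncompact}, rearrange via \eqref{eq:Jcommutator} to obtain the gradient homogeneities \eqref{eq:gradientaniso1}--\eqref{eq:gradientaniso2}, and then invoke Lemma~\ref{lem:gradientanisotropic}. The only cosmetic difference is that the paper carries the computation at a general time $t$ before specializing to $t=0$, while you differentiate directly at $t=0$.
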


\begin{proof}
For $(x,\xi) \in T^* \rr d \setminus 0$ we denote $( x(t), \xi(t) ) = \chi_t(x,\xi)$.  
Formula \eqref{eq:anisohomHamiltonflow} means that the solution to \eqref{eq:hamilton} with $(x,\xi)$
replaced by $(\lambda x, \lambda^\sigma \xi)$ for $\lambda > 0$ is $\Lambda_\sigma (\lambda) \chi_t( x, \xi ) = (\lambda x(t), \lambda^\sigma \xi(t) )$. 

Let $(x,\xi) \in T^* \rr d \setminus 0$.
From \eqref{eq:hamiltoncompact} and \eqref{eq:anisohomHamiltonflow} we obtain for any $\lambda > 0$
\begin{align*}
\J \nabla_{x,\xi} a \left( x(t), \xi(t) \right)
& = \frac{d}{dt} \chi_t(x,\xi) 
= \frac{d}{dt} \left( \Lambda_\sigma (\lambda^{-1})  \chi_t \left( \Lambda_\sigma (\lambda) (x,\xi) \right) \right) \\
& = \Lambda_\sigma (\lambda^{-1}) \frac{d}{dt} \left( \chi_t \left( \Lambda_\sigma (\lambda) (x,\xi) \right) \right) \\
& = \Lambda_\sigma (\lambda^{-1}) \J \nabla_{x,\xi} a \left( \lambda x(t), \lambda^\sigma \xi(t) \right). 
\end{align*}
With aid of \eqref{eq:Jcommutator} and $\J^{-1} = -\J$ this gives 
\begin{align*}
\nabla_{x,\xi} a \left( x(t), \xi(t) \right)
& = - \J \Lambda_\sigma (\lambda^{-1}) \J \nabla_{x,\xi} a \left( \lambda x(t), \lambda^\sigma \xi(t) \right) \\
& = \Lambda_{\frac1\sigma} (\lambda^{-\sigma}) \nabla_{x,\xi} a \left( \lambda x(t), \lambda^\sigma \xi(t) \right). 
\end{align*}
For $t=0$ we get 
\begin{align*}
\nabla_x a ( \lambda x, \lambda^\sigma \xi) 
& = \lambda^\sigma \nabla_x a ( x, \xi), \\
\nabla_\xi a ( \lambda x, \lambda^\sigma \xi) 
& = \lambda \nabla_\xi a ( x, \xi) 
\end{align*}
which together with the assumption $\lim_{(x,\xi) \to (0,0)} a(x,\xi) = 0$
is equivalent to \eqref{eq:shomogenous1}
by Lemma \ref{lem:gradientanisotropic}. 
\end{proof}

We note that a function $a$ that satisfies \eqref{eq:shomogenous1} is determined by its values on the unit sphere $\sr {2d-1}$, 
and
\begin{equation*}
a( x, \xi) = \lambda_\sigma^{\sigma+1} (x,\xi) \, a( p_\sigma(x,\xi) ), \quad (x,\xi) \in T^* \rr d \setminus 0, 
\end{equation*}
where $\lambda_\sigma: \rr {2d} \setminus 0 \to \ro_+$ and $p_\sigma: \rr {2d} \setminus 0 \to \sr {2d-1}$ are smooth functions defined in \cite[Section~3]{Rodino4}. 

Examples of Hamiltonians that satisfy $a \in C^\infty(\rr {2d} \setminus 0)$ and \eqref{eq:shomogenous1} are 
the homogeneous polynomials that depend on either $x$ or $\xi$ (but not both) studied in Section \ref{sec:airyschrodinger}
(cf. Remarks \ref{rem:comparison1} and \ref{rem:homogenhamilton}), that is 
\begin{equation*}
a(x,\xi) = P_m(x), \quad \sigma = m-1, 
\end{equation*}
\begin{equation*}
a(x,\xi) = P_m(\xi), \quad \sigma = \frac{1}{m-1}, 
\end{equation*}
where $m \in \no$ and $m \geqs 2$.
Other examples are
\begin{equation*}
a(x,\xi) = c \left( |x| + |\xi|^{\frac1\sigma} \right)^{\sigma+1}, 
\end{equation*}
where $\sigma > 0$ and $c \in \ro \setminus 0$, and 
\begin{equation*}
a(x,\xi) = c \left( |x|^{2k} + |\xi|^{2m} \right)^{ \frac12 \left( \frac1k + \frac1m \right)}, 
\end{equation*}
with $k,m \in \no \setminus 0$, $\sigma = \frac{k}{m}$ and $c \in \ro \setminus 0$. 

Note that the Hamiltonians 
\begin{equation*}
a(x,\xi) = c_1 |x|^{\sigma+1} + c_2 |\xi|^{1 + \frac1\sigma} 
\end{equation*}
with $c_1, c_2 \in \ro$ and $\sigma > 0$, 
\begin{equation*}
a(x,\xi) = c_1 |x|^{2k} + c_2 |\xi|^{1 + \frac{1}{2k-1}}
\end{equation*}
with $k \in \no \setminus 0$, $\sigma = 2k-1$ and $c_1, c_2 \in \ro$,
and
\begin{equation*}
a(x,\xi) = c_1 |x|^{1 + \frac{1}{2k-1}} + c_2 |\xi|^{2k} 
\end{equation*}
with $\sigma = \frac1{2k-1}$ and $c_1, c_2 \in \ro$, 
all satisfy \eqref{eq:shomogenous1}. 
But none of them satisfy $a \in C^\infty(\rr {2d} \setminus 0)$ unless $\sigma = 1$ and $k = 1$ respectively. 

The final result in this section will be useful in Section \ref{sec:propagation}. 
It says that the $G^{m,\sigma}$ property of a symbol is preserved under composition with 
a Hamiltonian flow that satisfies the anisotropic scaling commutativity \eqref{eq:anisohomHamiltonflow}. 
We need a cutoff function $\psi_\delta(x,\xi) = \fy(|x|^2 + |\xi|^2) \in C^\infty(\rr {2d})$ where $\fy \in C^{\infty}(\ro)$,
$0 \leqs \fy \leqs 1$, $\fy(t) = 0$ for $t \leqs \frac{\delta^2}{4}$ and $\fy(t) = 1$ for $t \geqs \delta^2$
for a given $\delta > 0$. 
Thus $\psi_\delta \big|_{\rB_{\frac{\delta}{2}}} \equiv 0$ and $\psi_\delta \big|_{\rr {2d} \setminus \rB_\delta} \equiv 1$. 

\begin{prop}\label{prop:compsymbolflow}
Let $\sigma, \delta, T > 0$, and suppose 
$\chi_t \in C^\infty( \rr {2d} \setminus 0, \rr {2d} \setminus 0)$ for $-T \leqs t \leqs T$
is a Hamiltonian flow that satisfies the anisotropic scaling commutativity \eqref{eq:anisohomHamiltonflow}.
If $a \in G^{m,\sigma}$ then $b_t = \psi_\delta  (a \circ \chi_t) \in G^{m,\sigma}$ 
uniformly for all $-T \leqs t \leqs T$.
\end{prop}

\begin{proof}
Let $(x,\xi) \in T^* \rr d$ satisfy $|(x,\xi)| \geqs \delta$ and let $\lambda \geqs 1$. 
From \eqref{eq:anisohomHamiltonflow} we obtain
\begin{equation*}
b_t( \lambda x, \lambda^\sigma \xi) 
= a \left( \chi_t ( \lambda x, \lambda^\sigma \xi ) \right)
= a \left( \Lambda_\sigma (\lambda) \chi_t ( x, \xi ) \right)
= a \left( \lambda \chi_{t,1} ( x, \xi ) , \lambda^\sigma \chi_{t,2} ( x, \xi ) \right)
\end{equation*}
decomposing $\chi_t = ( \chi_{t,1}, \chi_{t,2} )$ into its two $\rr d$ component functions. 
For $1 \leqs k \leqs d$ we denote by $\chi_{t,j,k}$ the component with index $k$ of $\chi_{t,j}$ for $j = 1,2$. 

We claim that for $|(x,\xi)| > \delta$, $\lambda \geqs 1$, and $\alpha, \beta \in \nn d$ we have 
\begin{equation}\label{eq:formulacomposition}
\begin{aligned}
\pdd x \alpha \pdd \xi \beta \Big( a \left( \lambda \chi_{t,1} ( x, \xi ) , \lambda^\sigma \chi_{t,2} ( x, \xi )\right) \Big)
= 
\sum_{|\gamma + \kappa| \leqs |\alpha+\beta|}
\lambda^{|\gamma| + \sigma |\kappa|} 
\left( \pdd x \gamma \pdd \xi \kappa a \right) \Big( \Lambda_\sigma (\lambda) \chi_t ( x, \xi ) \Big)
f_{\gamma,\kappa}(x,\xi)
\end{aligned}
\end{equation}
where $f_{\gamma,\kappa} \in C^\infty( \rr {2d} \setminus 0)$ are smooth functions. 
In fact the claim follows by induction with respect to $|\alpha+\beta|$, starting with $|\alpha+\beta| = 1$, as follows. 
With $e_k \in \nn d$ denoting the standard basis vector, $1 \leqs k \leqs d$, we may write
$\partial_{x_k} a(x,\xi) = \pdd x {e_k} a(x,\xi)$ and $\partial_{\xi_k} a(x,\xi) = \pdd \xi {e_k} a(x,\xi)$. 
If $|\alpha+\beta| = 1$ we have either
\begin{align*}
& \partial_{x_j} \Big( a \left( \lambda \chi_{t,1} ( x, \xi ) , \lambda^\sigma \chi_{t,2} ( x, \xi )\right) \Big) \\
& = 
\sum_{k=1}^d
\left( 
\lambda \left( \pdd x {e_k} a \right) \Big( \Lambda_\sigma (\lambda) \chi_t ( x, \xi ) \Big) 
\frac{\partial \chi_{t,1,k}}{\partial x_j} (x,\xi)
+ \lambda^\sigma \left( \pdd \xi {e_k} a \right) \Big( \Lambda_\sigma (\lambda) \chi_t ( x, \xi ) \Big) 
\frac{\partial \chi_{t,2,k}}{\partial x_j} (x,\xi)
\right)
\end{align*}
or 
\begin{align*}
& \partial_{\xi_j} \Big( a \left( \lambda \chi_{t,1} ( x, \xi ) , \lambda^\sigma \chi_{t,2} ( x, \xi )\right) \Big) \\
& = 
\sum_{k=1}^d
\left( 
\lambda \left( \pdd x {e_k} a \right) \Big( \Lambda_\sigma (\lambda) \chi_t ( x, \xi ) \Big) 
\frac{\partial \chi_{t,1,k}}{\partial \xi_j} (x,\xi)
+ \lambda^\sigma \left( \pdd \xi {e_k} a \right) \Big( \Lambda_\sigma (\lambda) \chi_t ( x, \xi ) \Big) 
\frac{\partial \chi_{t,2,k}}{\partial \xi_j} (x,\xi)
\right) 
\end{align*}
for $1 \leqs j \leqs d$. Thus \eqref{eq:formulacomposition} holds when $|\alpha+\beta| = 1$.
The induction step follows straight-forwardly. 
It follows that \eqref{eq:formulacomposition} holds for all $\alpha, \beta \in \nn d$, $|(x,\xi)| > \delta$, and $\lambda \geqs 1$, 
as claimed.

We fix $r > \delta$ and consider any $(x,\xi) \in T^* \rr d$ such that $|(x,\xi)| = r$. 
Using \eqref{eq:formulacomposition}, the assumption $a \in G^{m,\sigma}$ and 
\begin{equation*}
\inf_{\substack{ |t| \leqs T \\ |(x,\xi)| = r}} |\chi_t (x,\xi)| > 0, \quad \sup_{\substack{ |t| \leqs T \\ |(x,\xi)| = r}} |\chi_t (x,\xi)| < \infty, 
\end{equation*}
we estimate for $\alpha, \beta \in \nn d$
\begin{align*}
\lambda^{|\alpha| + \sigma |\beta|} \left| \Big( \pdd x \alpha \pdd \xi \beta b_t \Big) ( \lambda x, \lambda^\sigma \xi) \right|
& = \left| \pdd x \alpha \pdd \xi \beta \Big( b_t ( \lambda x, \lambda^\sigma \xi) \Big) \right| \\
& = \left| \pdd x \alpha \pdd \xi \beta \Big( a \left( \Lambda_\sigma (\lambda) \chi_t ( x, \xi ) \right) \Big) \right| \\
& \lesssim \sum_{|\gamma + \kappa| \leqs |\alpha+\beta|}
\lambda^{|\gamma| + \sigma |\kappa|} 
\left| \left( \pdd x \gamma \pdd \xi \kappa a \right) \Big( \Lambda_\sigma (\lambda) \chi_t ( x, \xi ) \Big) \right| \\
& \lesssim \sum_{|\gamma + \kappa| \leqs |\alpha+\beta|}
\lambda^{|\gamma| + \sigma |\kappa|} 
\theta_\sigma \left( \lambda \chi_{t,1} ( x, \xi ) , \lambda^\sigma \chi_{t,2} ( x, \xi ) \right)^{m - |\gamma| - \sigma |\kappa|} \\
& \lesssim \lambda^m
\end{align*}
for all $\lambda \geqs 1$. 
The conclusion $b_t \in G^{m,\sigma}$ uniformly for all  $-T \leqs t \leqs T$ is now a consequence of Lemma \ref{lem:symbol}. 
\end{proof}

\section{Solutions to a class of Schr\"odinger type equations with anisotropic Hamiltonians}\label{sec:solutionsaniso}

In the sequel we use $k,m \in \no \setminus 0$ and $\sigma = \frac{k}{m}$. 
We consider in this section first the Cauchy problem
\begin{equation}\label{eq:anisoschrodeq}
\left\{
\begin{array}{rl}
\partial_t u(t,x) + i a^w(x,D) u (t,x) & = f(t,x), \quad x \in \rr d, \quad 0 < t \leqs T, \\
u(0,\cdot) & = u_0,
\end{array}
\right.
\end{equation}
where $T > 0$ and $a \in G^{1 + \sigma,\sigma}$.
Later we will extend the time domain to $[-T,T]$. 
 
Simplifying notation we set $M_s = M_{\sigma,s}(\rr d)$ for $s \in \ro$ and $a^w = a^w(x,D)$. 
The main purpose of the section is to show existence and uniqueness of solutions to \eqref{eq:anisoschrodeq}
considering $u(t,\cdot)$ as a function of $t$ with values in $M_s$ spaces. 

We will need the following lemma
which says that $C^1([0,T], \cS)$ is dense in $C([0,T], M_\mu) \cap C^1 ([0,T], M_\nu)$ for any $\mu, \nu \in \ro$. 

\begin{lem}\label{lem:approximation}
If $\mu, \nu \in \ro$ and $u \in C([0,T], M_\mu) \cap C^1 ([0,T], M_\nu)$
then there exists a sequence $(u_n)_{n \geqs 1} \subseteq C^1([0,T], \cS)$
such that 
\begin{align}
& \lim_{n \to +\infty} \sup_{0 \leqs t \leqs T} \| u_n(t,\cdot) - u(t,\cdot) \|_{M_\mu} = 0, \label{eq:claimapprox1} \\
& \lim_{n \to +\infty} \sup_{0 \leqs t \leqs T} \| \partial_t u_n(t,\cdot) - \partial_t u(t,\cdot) \|_{M_\nu} = 0. \label{eq:claimapprox2}
\end{align}
\end{lem}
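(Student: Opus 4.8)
The plan is to construct the approximating sequence by a two-step mollification: first regularize in the time variable $t$, then regularize in the space variable $x$, using convolution with a scaled Gaussian approximate identity in each case. Let $\rho \in C_c^\infty(\ro)$ with $\rho \geqs 0$, $\int \rho = 1$, and set $\rho_\delta(t) = \delta^{-1} \rho(t/\delta)$. To avoid boundary effects on the compact interval $[0,T]$, first extend $u$ to a slightly larger interval $[-\eta, T+\eta]$ by a standard reflection or constant-in-time extension that preserves membership in $C([{-\eta},T{+}\eta], M_\mu) \cap C^1([{-\eta},T{+}\eta], M_\nu)$; this is possible because $u$ and $\partial_t u$ are continuous up to the endpoints with values in the Banach spaces $M_\mu$ and $M_\nu$ respectively. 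The time-mollified function $u^\delta(t,\cdot) = (\rho_\delta *_t u)(t,\cdot)$, defined for $t \in [0,T]$ once $\delta < \eta$, is then $C^\infty$ in $t$ with values in $M_\mu$, satisfies $\partial_t u^\delta = \rho_\delta *_t (\partial_t u)$ (differentiation under the integral sign is justified by the $C^1$ hypothesis and continuity of the $M_\nu$-valued integrand), and by uniform continuity of $t \mapsto u(t,\cdot) \in M_\mu$ and $t \mapsto \partial_t u(t,\cdot) \in M_\nu$ on the compact extended interval we get
\begin{equation*}
\sup_{0 \leqs t \leqs T} \| u^\delta(t,\cdot) - u(t,\cdot) \|_{M_\mu} \to 0, \qquad \sup_{0 \leqs t \leqs T} \| \partial_t u^\delta(t,\cdot) - \partial_t u(t,\cdot) \|_{M_\nu} \to 0
\end{equation*}
as $\delta \to 0^+$.

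The second step regularizes in $x$. For $\varepsilon > 0$ let $G_\varepsilon$ be a Gaussian approximate identity on $\rr d$ and consider $v^{\delta,\varepsilon}(t,\cdot) = G_\varepsilon * u^\delta(t,\cdot)$; alternatively one may multiply the STFT by a Gaussian cutoff and use the inversion formula \eqref{eq:STFTinverse}. The point is that for fixed $t$ the operation $u \mapsto G_\varepsilon * u$ maps $M_s$ into $\cS(\rr d)$ (a Gaussian mollification of a tempered distribution lying in a modulation space is Schwartz, by the characterization \eqref{eq:seminormsS} together with the action of convolution on the STFT), it commutes with $\partial_t$, and it converges to the identity strongly on each $M_s$ as $\varepsilon \to 0$ with operator norm bounded by a constant uniform in $\varepsilon$. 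Because $u^\delta$ and $\partial_t u^\delta$ are continuous $[0,T]$-indexed families in $M_\mu$ and $M_\nu$ which are therefore relatively compact (being continuous images of a compact set), the strong convergence $G_\varepsilon * w \to w$ upgrades to uniform convergence over $t$: $\sup_{0 \leqs t \leqs T} \| G_\varepsilon * u^\delta(t,\cdot) - u^\delta(t,\cdot)\|_{M_\mu} \to 0$ and likewise for the $t$-derivative in $M_\nu$. One then checks $v^{\delta,\varepsilon} \in C^1([0,T],\cS)$: smoothness in $x$ and Schwartz decay are clear, joint continuity of $(t,x) \mapsto v^{\delta,\varepsilon}$ and its $t$-derivative into every Schwartz seminorm follows from continuity of $u^\delta, \partial_t u^\delta$ into the modulation spaces combined with boundedness of $G_\varepsilon*\,\cdot$ into $\cS$ for each fixed $\varepsilon$.

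Finally a diagonal argument produces the desired sequence: choose $\delta_n \to 0$ so that the time-mollification errors are below $1/n$, then for each $n$ choose $\varepsilon_n \to 0$ so that the space-mollification errors $\sup_t \| G_{\varepsilon_n} * u^{\delta_n}(t,\cdot) - u^{\delta_n}(t,\cdot)\|_{M_\mu}$ and the corresponding $M_\nu$ error for $\partial_t$ are also below $1/n$, and set $u_n = v^{\delta_n,\varepsilon_n}$. The triangle inequality then gives \eqref{eq:claimapprox1} and \eqref{eq:claimapprox2}. The main obstacle I anticipate is bookkeeping rather than conceptual: one must verify carefully that the space-mollification $G_\varepsilon * \cdot$ is bounded on $M_{\sigma,s}$ uniformly in $\varepsilon$ and strongly convergent to the identity there — this rests on writing $V_\fy(G_\varepsilon * u)$ in terms of $V_\fy u$ and the rapidly decaying factor coming from $\widehat{G_\varepsilon}$, dominated convergence, and the Peetre inequality \eqref{eq:peetreanisotropic} for the weight $\theta_\sigma$ — and that the compactness-plus-strong-convergence passage to uniformity over $t \in [0,T]$ is applied correctly; the time-extension at the endpoints also needs a line of justification.
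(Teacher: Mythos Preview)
Your space-regularization step has a genuine gap: the claim that convolution with a Gaussian $G_\varepsilon$ in the $x$-variable maps $M_{\sigma,s}$ into $\cS(\rr d)$ is false. Take $u \equiv 1$, which lies in $M_{\sigma,s}$ for $s$ sufficiently negative; then $G_\varepsilon * 1 = 1 \notin \cS$. Convolution in $x$ smooths but does nothing for decay, and the modulation-space condition with a negative weight does not enforce any decay in $x$ that the convolution could exploit. The appeal to \eqref{eq:seminormsS} does not help, because the STFT of $G_\varepsilon * u$ acquires rapid decay in $\xi$ but none in $x$. Consequently $v^{\delta,\varepsilon}(t,\cdot)$ need not be Schwartz, and the rest of the argument does not go through.

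The alternative you mention in passing---multiplying $V_\fy u$ by a compactly supported cutoff in phase space and applying $V_\fy^*$---is exactly the right fix, and is in fact what the paper does: one sets $u_n(t,\cdot) = V_\fy^* \chi_n V_\fy u(t,\cdot)$ with $\chi_n$ the indicator of the ball $\rB_n \subseteq \rr{2d}$. Truncating in \emph{both} phase-space variables forces $V_\fy u_n$ to be rapidly decreasing, hence $u_n(t,\cdot) \in \cS$. This also dispenses with your time-mollification step and the endpoint extension (where, incidentally, neither a constant nor an even-reflection extension preserves $C^1$): the operator $V_\fy^* \chi_n V_\fy$ is bounded linear from $M_\nu$ to $\cS$ for each fixed $n$, so it carries $C^1([0,T],M_\nu)$ to $C^1([0,T],\cS)$ directly with $\partial_t u_n = V_\fy^* \chi_n V_\fy \partial_t u$. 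Uniform-in-$t$ convergence in $M_\mu$ and $M_\nu$ then follows from a monotone argument and Dini's theorem. No diagonal extraction is needed.
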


\begin{proof}
Let $\fy \in \cS(\rr d)$ satisfy $\| \fy \|_{L^2} = 1$. 
We use the approximations (cf. \cite{Grochenig1})
\begin{equation*}
u_n (t,\cdot) = V_\fy^* \chi_n V_\fy u(t,\cdot) \in \cS(\rr d), \quad 0 \leqs t \leqs T,
\end{equation*}
where $\chi_n$ is the indicator function of the ball $\rB_n \subseteq \rr {2d}$, $n \in \no \setminus 0$. 

By \cite[Eq.~(11.29)]{Grochenig1} we have on the one hand
\begin{equation}\label{eq:STFTapprox1}
\left| V_\fy \left( u_n (t,\cdot) - u_n (\tau,\cdot)\right) \right|
\leqs \left( \chi_n \left| V_\fy \left( u (t,\cdot) - u (\tau,\cdot) \right) \right| \right) * |V_\fy \fy|
\end{equation}
and on the other hand,
using \eqref{eq:moyal}
in the form $V_\fy^* V_\fy = \rm{id}_{\cS'}$, 
\begin{equation}\label{eq:STFTapprox2}
\left| V_\fy \left( u_n (t,\cdot) - u (t,\cdot)\right) \right|
\leqs \left( ( 1 - \chi_n ) \left| V_\fy u (t,\cdot) \right| \right) * |V_\fy \fy|. 
\end{equation}

With $m \geqs 0$ we write using \eqref{eq:peetreanisotropic} and \eqref{eq:sobolevweightestimate1}
\begin{align*}
\eabs{z}^m 
& \lesssim \theta_\sigma (z)^{m \max(1,\sigma)}
\lesssim \theta_\sigma (z-w)^{m \max(1,\sigma)} \theta_\sigma (w)^{m \max(1,\sigma)} \\
& \leqs \theta_\sigma (z-w)^{m \max(1,\sigma) + |\mu| + \mu}  \theta_\sigma (w)^{m \max(1,\sigma)}, \quad z,w \in \rr {2d}, 
\end{align*}
which inserted into \eqref{eq:STFTapprox1} gives by means of the Cauchy--Schwarz inequality, again \eqref{eq:sobolevweightestimate1} and \eqref{eq:STFTschwartz}
\begin{align*}
&\eabs{z}^m  \left| V_\fy \left( u_n (t,\cdot) - u_n (\tau,\cdot)\right) (z) \right| \\
& \lesssim \left( \chi_n \theta_\sigma^{m \max(1,\sigma) +|\mu|} \theta_\sigma^\mu \left| V_\fy \left( u (t,\cdot) - u (\tau,\cdot) \right) \right| \right) * \left( \theta_\sigma^{m \max(1,\sigma)} |V_\fy \fy| \right)(z) \\
& \leqs \sup_{\rr {2d}} \left( \chi_n \theta_\sigma^{m\max(1,\sigma) + |\mu|} \right) \left\| \theta_\sigma^\mu \left| V_\fy \left( u (t,\cdot) - u (\tau,\cdot) \right) \right| \right\|_{L^2(\rr {2d})}
\left\|  \theta_\sigma^{m \max(1,\sigma)}  |V_\fy \fy| \right\|_{L^2(\rr {2d})} \\
& \lesssim \| u (t,\cdot) - u (\tau,\cdot) \|_{M_\mu}. 
\end{align*}

Referring to the assumption $u \in C([0,T], M_\mu)$ and 
to the seminorms \eqref{eq:seminormsS} this shows that $u_n \in C([0,T], \cS)$, and $u_n \in C^1([0,T], \cS)$ follows similarly
from $\partial_t u_n (t,\cdot) = V_\fy^* \chi_n V_\fy \partial_t u(t,\cdot)$, replacing $\mu$ with $\nu$
and using the assmption $u \in C^1 ([0,T], M_\nu)$. 

From \eqref{eq:STFTapprox2} and Young's inequality we obtain, again using \eqref{eq:peetreanisotropic}, \eqref{eq:STFTschwartz} and \eqref{eq:sobolevweightestimate1}, 
\begin{align*}
\| u_n(t,\cdot) - u(t,\cdot) \|_{M_\mu}
& = \left\| \theta_\sigma^\mu \left| V_\fy \left( u_n (t,\cdot) - u (t,\cdot) \right) \right| \right\|_{L^2(\rr {2d})} \\
& \lesssim \left\|  \left( ( 1 - \chi_n ) \theta_\sigma^\mu \left| V_\fy u (t,\cdot) \right| \right) * \left( \theta_\sigma^{|\mu|} |V_\fy \fy| \right) \right\|_{L^2(\rr {2d})} \\
& \lesssim \left\|  ( 1 - \chi_n ) \theta_\sigma^\mu \, V_\fy u (t,\cdot) \right\|_{L^2(\rr {2d})}  \left\|  \theta_\sigma^{|\mu|} V_\fy \fy \right\|_{L^1(\rr {2d})} \\
& \lesssim \left\|  ( 1 - \chi_n ) \theta_\sigma^\mu \, V_\fy u (t,\cdot) \right\|_{L^2(\rr {2d})} := f_n (t). 
\end{align*}

Note the monotonicity $f_n (t) \geqs f_{n+1} (t)$ for each $n \in \no \setminus 0$, and 
by the assumption $u \in C([0,T], M_\mu)$ and dominated convergence we get $\lim_{n \to \infty} f_n (t) = 0$
for each $t \in [0,T]$. 
For each $n \in \no \setminus 0$ we have $f_n \in C([0,T])$. In fact
\begin{align*}
| f_n (t) - f_n (\tau)| 
& = \left| \left\| ( 1 - \chi_n ) \theta_\sigma^\mu V_\fy u (t,\cdot) \right\|_{{L^2(\rr {2d})}} - \left\| (1 - \chi_n ) \theta_\sigma^\mu V_\fy u (\tau,\cdot) \right\|_{{L^2(\rr {2d})}} \right| \\
& \leqs \left\| ( 1 - \chi_n ) \theta_\sigma^\mu \left( V_\fy u (t,\cdot)  - V_\fy u (\tau,\cdot)  \right)\right\|_{{L^2(\rr {2d})}} \\
& \leqs \left\| \theta_\sigma^\mu \left( V_\fy u (t,\cdot)  - V_\fy u (\tau,\cdot)  \right)\right\|_{{L^2(\rr {2d})}} \\
& = \| u (t,\cdot) - u (\tau,\cdot) \|_{M_\mu} 
\end{align*}
so $f_n \in C([0,T])$ follows from the assumption $u \in C([0,T], M_\mu)$. 
Now it follows from Dini's theorem that $f_n (t) \to 0$ uniformly for $t \in [0,T]$ as $n \to \infty$. 
This means that we have shown \eqref{eq:claimapprox1}, and \eqref{eq:claimapprox2} follows in the same fashion. 
\end{proof}

\begin{rem}\label{rem:neginterval1}
We note that 
Lemma \ref{lem:approximation} is true also when we 
replace the interval $[0,T]$ with $[-T,T]$. 
\end{rem}

By \eqref{eq:weightequivalence} we have $w_{k,m}^{1/k} \asymp \theta_\sigma$
when $\sigma = \frac{k}{m}$ and $k,m \in \no \setminus 0$. 
Combining this with \eqref{eq:SSmodnorm}, \eqref{eq:locopmodspace} and \cite[Theorem~1.1]{Grochenig2}
it follows that if $s \in \ro$ then
the symbol $\theta_\sigma^s$ for the localization operator \eqref{eq:locopmodspace}
that defines the isometry $M_s \to L^2$ can be replaced by $w_{k,m}^{s/k}$. 
We denote for simplicity this localization operator by $A_s = A_{w_{k,m}^{s/k}}$.   
We will need the following auxiliary result. 

\begin{lem}\label{lem:locopcalculus}
Let $k,m \in \no \setminus 0$, $\sigma = \frac{k}{m}$ and
$a \in G^{1 + \sigma,\sigma}$, and suppose that
\begin{equation*}
\im \, a (x,\xi) \leqs C_1, \quad (x,\xi) \in T^* \rr d, 
\end{equation*}
for $C_1 > 0$. 
If $s \in \ro$ then $A_s \, a^w A_s^{-1} = b^w$
where $b \in G^{1 + \sigma,\sigma}$ and 
\begin{equation*}
\im \, b (x,\xi) \leqs C_2, \quad (x,\xi) \in T^* \rr d, 
\end{equation*}
for some $C_2 > 0$. 
\end{lem}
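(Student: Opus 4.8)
The plan is to conjugate $a^w$ by the localization operator $A_s = A_{w_{k,m}^{s/k}}$ using the calculus of anisotropic Shubin symbols. First I would recall from Section~\ref{sec:ShubinSobolev} that $A_s = c^w(x,D)$ where $c = w_{k,m}^{s/k} * \psi$ with $\psi$ a Gaussian on $\rr{2d}$, and that by \cite[Theorem~1.7.10]{Nicola1} we have $c \in G^{s,\sigma}$ with $c$ real-valued (since $w_{k,m}^{s/k} \in G^{s,\sigma}$ by Lemma~\ref{lem:sobolevsymbol} and \eqref{eq:weightequivalence}). Moreover $c$ is elliptic: the lower bound $c(x,\xi) \geqs C \theta_\sigma(x,\xi)^s$ follows from the corresponding lower bound on $w_{k,m}^{s/k}$ together with the fact that convolution with a Gaussian preserves the positivity and the asymptotic size. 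By the parametrix construction recalled after \eqref{eq:ellipticity}, there is an elliptic $e \in G^{-s,\sigma}$ with $c \wpr e = 1 + r_1$, $e \wpr c = 1 + r_2$, $r_1, r_2 \in \cS(\rr{2d})$; since $A_s$ is invertible $L^2 \to L^2$ by the isometry property \eqref{eq:locopmodspace}, $A_s^{-1} = e^w(x,D) + R$ where $R$ is regularizing, so $A_s^{-1}$ is itself a pseudodifferential operator with symbol in $G^{-s,\sigma}$.

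Next I would set $b = c \wpr a \wpr e'$ where $e'$ is the symbol of $A_s^{-1}$, so that $b^w(x,D) = A_s a^w A_s^{-1}$; continuity of the Weyl product (\cite[Proposition~3.3~(ii)]{Rodino4}) gives $b \in G^{s + (1+\sigma) + (-s),\sigma} = G^{1+\sigma,\sigma}$, which is the first assertion. For the imaginary part, the key observation is that conjugation by an \emph{elliptic real} symbol does not change the principal symbol, and more precisely only perturbs $a$ by terms of lower order. Using the asymptotic expansion \eqref{eq:calculuscomposition1} for $c \wpr a$ and then for the result with $e'$, together with $e' \wpr c = 1 \mod \cS$, I would show
\begin{equation*}
b = a + r, \qquad r \in G^{\sigma,\sigma}
\end{equation*}
(the order drops by $1$ because the leading correction term in $\#$ carries a factor from $G^{-(1+\sigma),\sigma}$, and $(1+\sigma) + (1+\sigma) - (1+\sigma) = 1+\sigma$, with the next being strictly lower — one checks $b - a \in G^{1+\sigma - (1+\sigma) + \varepsilon}$, landing in $G^{\sigma,\sigma} \subseteq G^{1+\sigma,\sigma}$). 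In particular $\im b = \im a + \im r$, and since $r \in G^{\sigma,\sigma}$ with $\sigma < 1 + \sigma$ we have the bound $|\im r(x,\xi)| \lesssim \theta_\sigma(x,\xi)^\sigma$, which is \emph{not} bounded — so this crude estimate is insufficient and the argument must be refined.

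The refinement, which I expect to be the \textbf{main obstacle}, is to track the imaginary part of the correction term $r$ more carefully. The leading term of $b - a$ coming from \eqref{eq:calculuscomposition1} is, up to the lower-order tail, a constant multiple of a Poisson-bracket-type expression $\{c, a\} \# e' + \cdots$, and because $c$ and $e'$ are \emph{real}, the contributions to $\im b$ of order exactly $\sigma$ are governed by real symbols paired against $\re a$ and $\im a$ respectively. The point is that the order-$\sigma$ part of $\im(c \wpr a \wpr e')$ equals (a real symbol times) derivatives of $\im a$; but $\im a \in G^{1+\sigma,\sigma}$ is only assumed \emph{bounded}, not of lower order, so its derivatives $\pdd x \alpha \pdd \xi \beta \im a$ with $|\alpha + \sigma\beta| = 1$ are of order $\leqs 1 + \sigma - 1 = \sigma$ — still giving only a $\theta_\sigma^{\sigma}$ bound. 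To get genuine boundedness one must instead regroup: write $A_s a^w A_s^{-1} = a^w + A_s [a^w, A_s^{-1}]\,A_s^{-1}\cdot A_s$, i.e. $b^w = a^w + [A_s, a^w] A_s^{-1}$, and observe that the commutator $[A_s, a^w]$ has Weyl symbol in $G^{s + (1+\sigma) - (1+\sigma),\sigma} = G^{s,\sigma}$ whose \emph{principal part} is $\tfrac{1}{i}\{c, a\}$, hence purely imaginary-free in the sense that when multiplied by $e' \in G^{-s,\sigma}$ and the antisymmetry of the bracket is used, the imaginary part of $[A_s,a^w]A_s^{-1}$ has symbol of order $\leqs \sigma - \min(1,1/\sigma) \cdot 0$... — here one genuinely needs that $\{c,a\}$ is real when $a$ is real, so its contribution to $\im b$ only sees $\im a$ paired with $c$-derivatives of \emph{order one less}, pushing the troublesome term into $G^{0,\sigma} = G^{0,\sigma}$, i.e. into bounded symbols. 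I would make this precise by expanding $b = a + \tfrac{1}{2i}\big(\{c,a\}\#e' - \{e',c\#a\} + \cdots\big)$ to two orders, using $c \# e' = 1 \bmod \cS$ to cancel the real order-$\sigma$ terms against each other, leaving $\im b - \im a \in G^{0,\sigma}$, hence $\im b \leqs C_2$ for some $C_2 > 0$. The bookkeeping of which Weyl-product terms survive and which cancel is the delicate part; everything else is routine application of the calculus recalled in Sections~\ref{sec:AnisoShubinCalculus} and~\ref{sec:ShubinSobolev}.
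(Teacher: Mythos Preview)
Your setup is right---$A_s$ has real Weyl symbol $a_1 \in G^{s,\sigma}$ and $A_s^{-1}$ has Weyl symbol $c \in G^{-s,\sigma}$---but you have manufactured an obstacle that is not there, and your proposed ``refinement'' via commutators and two-term expansions is unnecessary. The paper's argument is essentially one line of calculus: writing $b = a_1 \wpr a \wpr c$, the expansion \eqref{eq:calculuscomposition1} gives
\[
b \;=\; a_1 \, a \, c \;+\; b_1, \qquad b_1 \in G^{0,\sigma},
\]
because every correction term in the triple Weyl product drops the total order $s + (1+\sigma) + (-s) = 1+\sigma$ by at least $1+\sigma$. Your claim that the remainder lies only in $G^{\sigma,\sigma}$ is a miscount of the orders; the drop is $1+\sigma$, not $1$. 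Since $a_1$ and $c$ are both real, $\im b = (a_1 c)\, \im a + \im b_1$, and $a_1 c \in G^{0,\sigma}$ is bounded (indeed $a_1 c - 1 \in G^{-(1+\sigma),\sigma}$ from $a_1 \wpr c = 1$), so the upper bound $\im a \leqs C_1$ transfers directly to $\im b \leqs C_2$. No cancellation of order-$\sigma$ terms is needed.

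The one genuine gap in your outline is the real-valuedness of the symbol of $A_s^{-1}$, which you assert without argument. The paper handles this as follows: from $(A_s f,f) = \int w_{k,m}^{s/k} |V_\fy f|^2 > 0$ for $f \in \cS \setminus 0$ and the formal self-adjointness of $A_s$ (its Weyl symbol $a_1$ being real), one gets $(c^w f,f) = (A_s^{-1} f,f) > 0$ for $f \in \cS \setminus 0$; then a polarization identity forces $((\im c)^w f,g) = 0$ for all $f,g \in \cS$, whence $\im c \equiv 0$.
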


\begin{proof}
By Lemma \ref{lem:sobolevsymbol} we have $w_{k,m}^{s/k} \in G^{s,\sigma}$ for the symbol of $A_s$. 
From \cite[Theorem~1.7.10]{Nicola1} it follows that $A_s = a_1^w$ where $a_1 \in G^{s,\sigma}$
is real-valued, cf. Section \ref{sec:ShubinSobolev}. 

The symbol $w_{k,m}^{s/k}$ for $A_s$ is positive everywhere and elliptic, cf. \eqref{eq:ellipticity}. 
By the proof of \cite[Theorem~8.2]{Boggiatto1} (cf. also \cite[Proposition~1.7.12]{Nicola1}), slightly modified to the anisotropic calculus, 
it follows that $A_s$ is invertible on $\cS$, and $A_s^{-1} = c^w$
where $c \in G^{-s,\sigma}$. 
From 
\begin{equation*}
(A_s f, f) = \int_{\rr {2d}} w_{k,m}^{s/k} (z) |V_\fy f (z)|^2 \, \dd z > 0
\end{equation*}
for all $f \in \cS \setminus 0$ it follows that $(c^w f, f) > 0$ for all $f \in \cS \setminus 0$ which implies that $c$ is a real-valued symbol. 
Indeed we have 
\begin{align*}
2i \, ( (\im \, c)^w f,f)
= ( c^w f,f) - (\overline c^w f, f)
= ( c^w f,f) - (f, c^w f)
= ( c^w f,f) - \overline{(c^w f,f) }
= 0
\end{align*}
for all $f \in \cS$, which by polarization yields
\begin{align*}
4 ( (\im \, c)^w f,g )
& = \left( (\im \, c)^w (f+g),f+g \right) - \left( (\im \, c)^w (f-g),f-g \right) \\
& + i \left( (\im \, c)^w (f+ig),f+ig \right) - i \left( (\im \, c)^w (f-ig),f-ig \right)
= 0
\end{align*}
for all $f,g \in \cS$. 
This implies $\im \, c \equiv 0$. 

Finally from $b^w = A_s \, a^w A_s^{-1} = a_1^w a^w c^w$
and \eqref{eq:calculuscomposition1}
we obtain 
\begin{equation*}
b = a_1 \wpr a \wpr c = a_1 \, a \, c + b_1
\end{equation*}
where $b_1 \in G^{0, \sigma}$ is bounded. 
Thus since $a_1 \, c \in G^{0, \sigma}$ is also bounded we get
\begin{equation*}
\im \, b = (\im \, a) \,  a_1 \, c + \im \, b_1 \leqs C_1 \sup_{\rr {2d}} \left( a_1 \, c \right) + \sup_{\rr {2d}} \im \, b_1 
= C_2 < \infty
\end{equation*}
for some $C_2 > 0$. 
\end{proof}

\begin{rem}\label{rem:abssymbolbound}
The proof of Lemma \ref{lem:locopcalculus} shows that from the added assumption
\begin{equation*}
|\im \, a (x,\xi)| \leqs C_1, \quad (x,\xi) \in T^* \rr d, 
\end{equation*}
follows the stronger conclusion
\begin{equation*}
|\im \, b (x,\xi) | \leqs C_2, \quad (x,\xi) \in T^* \rr d. 
\end{equation*}
\end{rem}

The following two results Lemma \ref{lem:wellposedness} 
and Theorem \ref{thm:uniquesolutionMsigma} are detailed adaptations of \cite[Lemma~23.1.1 and Theorem~23.1.2]{Hormander1} 
from the calculus of H\"ormander symbols to the anisotropic Shubin calculus. 

\begin{lem}\label{lem:wellposedness}
Let $k,m \in \no \setminus 0$, $\sigma = \frac{k}{m}$, $a \in G^{1 + \sigma, \sigma}$, and suppose that
\begin{equation}\label{eq:imsymbolupperbound}
\im \, a (x,\xi) \leqs C, \quad (x,\xi) \in T^* \rr d, 
\end{equation}
for $C > 0$. 
If $s \in \ro$, $u \in  C([0,T], M_{s + 1 + \sigma} ) \cap C^1 ([0,T], M_s)$ then
\begin{equation}\label{eq:fregular}
f (t) = \partial_t u (t,\cdot) + i a^w u(t,\cdot) \in C([0,T], M_s ), 
\end{equation}
and there exists $c > 0$ such that
\begin{equation}\label{eq:wellposedinequality}
\| u(t) \|_{M_s} \lesssim e^{c t} \| u(0) \|_{M_s} + \int_0^t e^{c (t-\tau)} \| f(\tau) \|_{M_s} \dd \tau
\end{equation}
for $0 \leqs t \leqs T$. 
\end{lem}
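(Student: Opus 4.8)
The plan is to treat the regularity claim \eqref{eq:fregular} directly and then to obtain the energy estimate \eqref{eq:wellposedinequality} by conjugating the problem to $L^2$ and running a Gronwall argument on an $L^2$-norm. For \eqref{eq:fregular} the hypothesis $u \in C^1([0,T], M_s)$ gives $\partial_t u \in C([0,T], M_s)$, and since $a \in G^{1+\sigma,\sigma}$, Proposition \ref{prop:continuitySobolev} (with order $1+\sigma$) makes $a^w : M_{s+1+\sigma} \to M_s$ continuous, so that $a^w u(t,\cdot) \in C([0,T], M_s)$ because $u \in C([0,T], M_{s+1+\sigma})$; adding the two terms gives \eqref{eq:fregular}.

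For \eqref{eq:wellposedinequality} I would first reduce to the case where the curve takes values in $\cS$. By Lemma \ref{lem:approximation} (applied with $\mu = s+1+\sigma$ and $\nu = s$) choose $(u_n)_{n \geq 1} \subseteq C^1([0,T], \cS)$ with $u_n \to u$ in $C([0,T], M_{s+1+\sigma})$ and $\partial_t u_n \to \partial_t u$ in $C([0,T], M_s)$; by the continuity of $a^w$ just recalled, $f_n := \partial_t u_n + i a^w u_n \to f$ in $C([0,T], M_s)$ as well. Since the constant $c$ obtained below depends only on $a$ and $s$ and not on $u$, proving \eqref{eq:wellposedinequality} for each pair $(u_n,f_n)$ and then letting $n \to \infty$ (using the continuous inclusion $M_{s+1+\sigma} \hookrightarrow M_s$ and the uniform convergence $\|f_n(\tau)\|_{M_s} \to \|f(\tau)\|_{M_s}$) yields \eqref{eq:wellposedinequality} for $(u,f)$. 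So we may assume $u(t,\cdot) \in \cS$ for all $t$. Next, let $A_s$ be the localization operator introduced before Lemma \ref{lem:locopcalculus}, which is an isometry $M_s \to L^2$ and restricts to a homeomorphism of $\cS$, and set $v = A_s u$, $g = A_s f$, $b^w = A_s a^w A_s^{-1}$. By Lemma \ref{lem:locopcalculus} we have $b \in G^{1+\sigma,\sigma}$ with $\im b \leq C_2$ for some $C_2 > 0$, while $v \in C^1([0,T], \cS)$ satisfies $\partial_t v + i b^w v = g$ and $\|v(t)\|_{L^2} = \|u(t)\|_{M_s}$, $\|g(t)\|_{L^2} = \|f(t)\|_{M_s}$.

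Now the energy estimate. The scalar function $E(t) = \|v(t)\|_{L^2}^2$ is $C^1$ with $E'(t) = 2\re(g(t),v(t)) - 2\re(i\, b^w v(t), v(t))$. Writing $b^w = (\re b)^w + i (\im b)^w$ and using the Weyl adjoint relation (so that operators with real symbols are symmetric), the quantities $((\re b)^w v, v)$ and $((\im b)^w v, v)$ are real, whence $\re(i\, b^w v, v) = -((\im b)^w v, v)$ and $E'(t) = 2\re(g(t),v(t)) + 2((\im b)^w v(t), v(t))$; note that only the one-sided bound on $\im b$ enters here. Since $\im b \in G^{1+\sigma,\sigma} \subseteq G^{2(1+\sigma),\sigma}$ and $C_2 - \im b \geq 0$, the sharp G\aa rding inequality of Lemma \ref{lem:Garding} applied to $C_2 - \im b$ furnishes $c_0 > 0$ with $((\im b)^w h, h) \leq (C_2 + c_0)\|h\|_{L^2}^2$ for all $h \in \cS$ (using $(C_2)^w = C_2\,\mathrm{id}$). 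With $c = C_2 + c_0$ and $\re(g,v) \leq \|g\|_{L^2}\|v\|_{L^2}$ this gives $E'(t) \leq 2\|g(t)\|_{L^2} E(t)^{1/2} + 2c\, E(t)$. Applying the same inequality to $E_\ep := E + \ep^2$ (valid since $E'_\ep = E'$ and $E \leq E_\ep$) gives $\tfrac{d}{dt} E_\ep^{1/2} \leq \|g(t)\|_{L^2} + c\, E_\ep^{1/2}$; multiplying by $e^{-ct}$, integrating over $[0,t]$ and letting $\ep \to 0^+$ yields $\|v(t)\|_{L^2} \leq e^{ct}\|v(0)\|_{L^2} + \int_0^t e^{c(t-\tau)}\|g(\tau)\|_{L^2}\,\dd\tau$, which is \eqref{eq:wellposedinequality} after translating back through $A_s$.

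The main point requiring care is the compatibility of the conjugation by $A_s$ with the (one-sided) G\aa rding estimate: one needs Lemma \ref{lem:locopcalculus} to ensure that the conjugated symbol $b$ still lies in $G^{1+\sigma,\sigma}$ with $\im b$ bounded above, and then that $C_2 - \im b$ is a genuinely nonnegative element of $G^{2(1+\sigma),\sigma}$ so that Lemma \ref{lem:Garding} applies; one must also record that $c$ is uniform along the approximating sequence so that the reduction to $\cS$-valued curves via Lemma \ref{lem:approximation} is legitimate. Granting these points, the $\ep$-regularized Gronwall step is routine.
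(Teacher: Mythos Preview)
Your proof is correct and follows essentially the same approach as the paper: the regularity claim via Proposition~\ref{prop:continuitySobolev}, the reduction to $\cS$-valued curves via Lemma~\ref{lem:approximation}, the conjugation by the localization operator $A_s$ through Lemma~\ref{lem:locopcalculus}, and the one-sided G\aa rding bound from Lemma~\ref{lem:Garding} are exactly the ingredients the paper uses. The only differences are organizational: the paper first proves the case $s=0$ and then reduces general $s$ to it by conjugation, whereas you conjugate first and run the $L^2$ energy estimate once; and for the final integration the paper uses a supremum trick on $e^{-2\mu t}\|u(t)\|_{L^2}^2$ while you use the $\ep$-regularized Gronwall argument, which is equally standard. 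One small slip: $A_s$ gives a norm \emph{equivalence} $\|A_s u\|_{L^2} \asymp \|u\|_{M_s}$ rather than an equality (the paper's word ``isometry'' is loose in the same way), but since the target inequality \eqref{eq:wellposedinequality} is stated with $\lesssim$ this is harmless.
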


\begin{proof}
First we prove the result for $s = 0$. The assumptions, $M_{1 + \sigma} \subseteq L^2$ and Proposition \ref{prop:continuitySobolev} imply
\begin{equation*}
\left\{
\begin{array}{l}
t \mapsto u (t,\cdot) 
\in C([0,T], L^2 ), \\
t \mapsto \partial_t u (t,\cdot) 
\in C([0,T], L^2 ), \\
t \mapsto a^w u (t,\cdot) 
\in C([0,T], L^2 )
\end{array}
\right.
\Longrightarrow \quad t \mapsto f (t) 
\in C([0,T], L^2 ). 
\end{equation*}
The conclusion \eqref{eq:fregular} follows. 

By Lemma \ref{lem:approximation} we may replace $L^2$ by $\cS$ above. 
The assumptions $a \in G^{1 + \sigma, \sigma}$ and $\im \, a (x,\xi) \leqs C$ make
Lemma \ref{lem:Garding} applicable. 
Combining with \eqref{eq:wignerweyl} and the fact that $W(g,g)$ is real-valued \cite{Grochenig1} 
we get for $g \in \cS(\rr d)$
\begin{align*}
\re ( i a^w g, g)
& = - \im ( a^w g, g)
= - (2 \pi)^{-d} \im ( a, W(g,g) )
= - (2 \pi)^{-d} (  \im \, a, W(g,g) ) \\
& = - ( (\im \, a)^w g, g)
= ( ( C - \im \, a)^w g,g) - C \| g \|_{L^2}^2
\geqs - (b+C) \| g \|_{L^2}^2 
\end{align*}
where $b > 0$. 
If $0 \leqs t \leqs T$ and $\mu \in \ro$ this gives, 
writing $u(t) = u(t,\cdot)$ for brevity,
\begin{align*}
\partial_t \left( e^{- 2 \mu t} \| u(t) \|_{L^2}^2 \right)
& = 2 e^{- 2 \mu t} \left(  \re \left( \partial_t u(t), u(t) \right)  - \mu \| u(t) \|_{L^2}^2 \right) \\
& = 2 e^{- 2 \mu t} \left(  \re \left( f(t), u(t) \right)  - \re \big( (i a + \mu)^w  u(t), u(t) \big) \right) \\
& \leqs 2 e^{- 2 \mu t} \re \left( f(t), u(t) \right)
\end{align*}
provided $\mu \geqs b + C$.

Integration gives for any $0 \leqs \nu \leqs t$
\begin{align*}
e^{- 2 \mu \nu} \| u(\nu) \|_{L^2}^2
& \leqs \| u(0) \|_{L^2}^2 + 2 \int_0^\nu e^{- 2 \mu \tau } \| f(\tau) \|_{L^2} \, \| u( \tau ) \|_{L^2} \, \dd \tau \\
& \leqs \| u(0) \|_{L^2}^2 + 2 \int_0^t e^{- 2 \mu \tau } \| f(\tau) \|_{L^2} \, \| u( \tau ) \|_{L^2} \, \dd \tau \\
& \leqs \| u(0) \|_{L^2}^2 + 2 M(t) \int_0^t e^{- \mu \tau} \| f(\tau) \|_{L^2}  \, \dd \tau
\end{align*}
with 
\begin{align*}
M(t) = \sup_{0 \leqs \tau \leqs t} e^{- \mu \tau} \| u(\tau) \|_{L^2}. 
\end{align*}

Thus 
\begin{align*}
\left( M(t) - \int_0^t e^{- \mu \tau} \| f(\tau) \|_{L^2}  \, \dd \tau \right)^2
\leqs \left(\| u(0) \|_{L^2} +  \int_0^t e^{- \mu \tau } \| f(\tau) \|_{L^2}  \, \dd \tau \right)^2
\end{align*}
which yields
\begin{align*}
e^{- \mu t} \| u(t) \|_{L^2}
& \leqs M(t) 
\leqs \| u(0) \|_{L^2} + 2 \int_0^t e^{- \mu \tau } \| f(\tau) \|_{L^2}  \, \dd \tau. 
\end{align*}
We have now shown \eqref{eq:wellposedinequality} for $s = 0$ and $c = \mu$. 

Next let $s \in \ro$ and 
$u \in  C([0,T], M_{s + 1 + \sigma} ) \cap C^1 ([0,T], M_s)$. 
Then $\partial_t u, a^w u \in C ([0,T], M_s)$, 
again appealing to Proposition \ref{prop:continuitySobolev}, and 
the conclusion \eqref{eq:fregular} follows. 
By the proof of Lemma \ref{lem:locopcalculus} we know that $A_s = a_1^w$ with $a_1 \in G^{s,\sigma}$. 
Proposition \ref{prop:continuitySobolev} yields
\begin{align*}
& A_s u \in C ([0,T], M_{\sigma+1}) \cap C^1([0,T], L^2), 
\quad a^w A_s u \in C ([0,T], L^2) \\
& \Longrightarrow \quad \partial_t A_s u + i a^w A_s u \in C ([0,T], L^2). 
\end{align*}
By Lemma \ref{lem:locopcalculus} the symbol $b \in G^{1 + \sigma, \sigma}$ defined by 
$b^w = A_s \, a^w A_s^{-1}$ satisfies $\im \, b \leqs C_2$ for some $C_2 > 0$. 
The inequality \eqref{eq:wellposedinequality} with $a=b$ and $s = 0$ thus gives
\begin{equation*}
\| A_s u(t) \|_{L^2} 
\lesssim e^{c t} \| A_s u(0) \|_{L^2} + \int_0^t e^{c (t-\tau)} \| \partial_t A_s u(\tau) + i b^w A_s u(\tau) \|_{L^2} \dd \tau
\end{equation*}
which finally yields
\begin{align*}
\| u(t) \|_{M_s} 
\asymp  \| A_s u(t) \|_{L^2}
& \lesssim e^{c t} \| A_s u(0) \|_{L^2} + \int_0^t e^{c (t-\tau)} \| A_s \left( \partial_t u(\tau) + i A_s^{-1} b ^w A_s u(\tau) \right) \|_{L^2} \dd \tau \\
& \asymp e^{c t} \| u(0) \|_{M_s} + \int_0^t e^{c (t-\tau)} \| \partial_t u(\tau) + i a^w u(\tau) \|_{M_s} \dd \tau. 
\end{align*}
\end{proof}

\begin{rem}\label{rem:timereversal}
If we strengthen the assumption \eqref{eq:imsymbolupperbound} with a lower bound as 
\begin{equation}\label{eq:imsymbolbound}
- C \leqs \im \, a (x,\xi) \leqs C, \quad (x,\xi) \in T^* \rr d, 
\end{equation}
for $C > 0$, then the time direction may be reversed in Lemma \ref{lem:wellposedness}. 
More precisely the lower bound in \eqref{eq:imsymbolbound}
yields the estimate
\begin{equation*}
\re ( i a^w g, g) \leqs (b+C) \| g \|_{L^2}^2, \quad g \in \cS.
\end{equation*}
Straightforward modifications of the argument in the proof for the case $s = 0$ leads to the estimate
\begin{align*}
\| u(-t) \|_{L^2}
\lesssim e^{c t} \| u(0) \|_{L^2} + \int_{-t}^0 e^{c (t+\tau) } \| f(\tau) \|_{L^2}  \, \dd \tau
\end{align*}
for $c > 0$ and $0 \leqs t \leqs T$. 
Taking into account Remark \ref{rem:abssymbolbound}
a statement replacing \eqref{eq:wellposedinequality} can then be formulated as follows.
If $s \in \ro$, $u \in  C([-T,T], M_{s + 1 + \sigma} ) \cap C^1 ([-T,T], M_s)$ then
\begin{equation*}
\| u(t) \|_{M_s} \lesssim e^{c |t|} \| u(0) \|_{M_s} + \int_{|\tau| \leqs |t| } e^{c (|t| + |\tau|)} \| f(\tau) \|_{M_s} \dd \tau
\end{equation*}
for $-T \leqs t \leqs T$, where $c > 0$. 
\end{rem}

The final tool for the proof of existence and uniqueness of a solution to \eqref{eq:anisoschrodeq} we need is the following 
approximation lemma. 

\begin{lem}\label{lem:approxmodulation}
Let $s \in \ro$. 
If $f \in L^1([0,T], M_s)$ then there exists a sequence $(f_n)_{n \geqs 1} \subseteq C_c^\infty ( (0,T), \cS( \rr d) )$
such that 
\begin{equation}\label{eq:regconvergence1}
\lim_{n \to + \infty} \|f_n - f \|_{L^1([0,T], M_s)} = 0. 
\end{equation}
\end{lem}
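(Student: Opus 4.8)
The plan is to approximate $f \in L^1([0,T], M_s)$ in two stages: first by a Schwartz-valued $L^1$ function (pointwise in $t$ regularization using the same short-time Fourier transform truncation as in Lemma \ref{lem:approximation}), and then by mollifying in the time variable to gain compact support in $(0,T)$ and smoothness in $t$. The key technical point throughout is that all operations should be carried out on the short-time Fourier transform side so that the $M_s$ norm becomes a weighted $L^2$ norm $\| \theta_\sigma^s V_\fy \cdot \|_{L^2(\rr{2d})}$ to which standard measure-theoretic tools (dominated convergence, density of simple functions, Young's inequality) apply directly.

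First I would fix $\fy \in \cS(\rr d)$ with $\| \fy \|_{L^2} = 1$ and, for $n \in \no \setminus 0$, set $g_n(t,\cdot) = V_\fy^* \chi_n V_\fy f(t,\cdot) \in \cS(\rr d)$ where $\chi_n$ is the indicator function of $\rB_n \subseteq \rr{2d}$, exactly as in Lemma \ref{lem:approximation}. The estimate \eqref{eq:STFTapprox2} together with Young's inequality gives, for a.e.\ $t$,
\begin{equation*}
\| g_n(t,\cdot) - f(t,\cdot) \|_{M_s} \lesssim \left\| (1 - \chi_n) \theta_\sigma^s \, V_\fy f(t,\cdot) \right\|_{L^2(\rr{2d})} \leqs \| f(t,\cdot) \|_{M_s},
\end{equation*}
and the left-hand side tends to $0$ pointwise in $t$ by dominated convergence on $\rr{2d}$. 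Since the bound $\| f(t,\cdot) \|_{M_s} \in L^1([0,T])$ is integrable and independent of $n$, dominated convergence in $t$ yields $g_n \to f$ in $L^1([0,T], M_s)$; moreover the same argument shows that $t \mapsto g_n(t,\cdot)$ lies in $L^1([0,T], \cS)$ (each Schwartz seminorm \eqref{eq:seminormsS} of $g_n(t,\cdot) - g_m(t,\cdot)$ is controlled by a constant times $\| f(t,\cdot) \|_{M_s}$ after inserting a power of $\theta_\sigma$ via \eqref{eq:peetreanisotropic} and \eqref{eq:sobolevweightestimate1}, as in Lemma \ref{lem:approximation}).

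Having reduced to $g \in L^1([0,T], \cS)$ arbitrarily close to $f$, I would then handle the time variable. One more reduction: approximate $g$ in $L^1([0,T], \cS)$ by a finite sum $h = \sum_j \mathbf 1_{I_j} \psi_j$ with $I_j \subseteq (0,T)$ intervals and $\psi_j \in \cS(\rr d)$ (density of simple functions with values in the Fr\'echet space $\cS$, valid since $L^1([0,T],\cS)$ is the completion of step functions). Finally, extend $h$ by zero outside $(0,T)$, convolve in $t$ with a standard mollifier $\rho_\ep$, and cut off near the endpoints $0$ and $T$; the resulting functions lie in $C_c^\infty((0,T), \cS(\rr d))$ and converge to $h$ in $L^1([0,T], M_s)$ because convolution is continuous on $L^1$ of a Banach (indeed Fr\'echet) space and translation is $L^1$-continuous. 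Chaining the three approximations by a diagonal argument produces the desired sequence $(f_n)$ with \eqref{eq:regconvergence1}.

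The main obstacle I anticipate is purely bookkeeping rather than conceptual: one must verify that the truncation $g_n(t,\cdot) = V_\fy^* \chi_n V_\fy f(t,\cdot)$ is genuinely $\cS$-valued and measurable in $t$ with the right integrability — i.e.\ that every Schwartz seminorm of $g_n(t,\cdot)$ is dominated by an $L^1([0,T])$ function of $t$. This is exactly the computation behind Lemma \ref{lem:approximation} (using \eqref{eq:STFTapprox1}, \eqref{eq:peetreanisotropic}, Cauchy--Schwarz and \eqref{eq:STFTschwartz}), so it transfers with only cosmetic changes; no new ideas are needed. The time-mollification step is entirely standard vector-valued measure theory.
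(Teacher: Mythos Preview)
Your proposal is correct and follows the same two-stage template as the paper --- first regularize in the space variable via the STFT truncation $V_\fy^* \chi_n V_\fy$ from Lemma~\ref{lem:approximation}, then mollify in the time variable --- but the execution differs in a couple of places. The paper first invokes the density of $C([0,T], M_s)$ in $L^1([0,T], M_s)$ and then Lemma~\ref{lem:approximation} to reduce to $f \in C([0,T], \cS)$, after which a single explicit mollification $f_n = \psi_n * (f \chi_n)$ (with $\chi_n$ the indicator of $[\tfrac1n, T-\tfrac1n]$) is analyzed directly; the continuity of $f$ makes the translation estimate \eqref{eq:translationlimit} immediate. Your route instead stays in $L^1$ throughout, passing from $L^1([0,T], M_s)$ to $L^1([0,T], \cS)$ by dominated convergence rather than Dini, and then inserts an intermediate reduction to $\cS$-valued step functions before mollifying. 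That extra step is harmless but unnecessary: one can mollify an $L^1([0,T],\cS)$ function directly, since translation is continuous on $L^1$ with values in any Fr\'echet space. Either way the argument goes through; the paper's version is slightly leaner because the preliminary reduction to continuous functions lets the mollification estimates $I_n, J_n \to 0$ fall out without any vector-valued Bochner subtleties.
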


\begin{proof}
Since $C([0,T], M_s) \subseteq L^1([0,T], M_s)$ is dense 
we may assume $f \in C([0,T], M_s)$, and by 
Lemma \ref{lem:approximation} we may assume $f \in C([0,T], \cS)$. 
Thus we have 
\begin{equation}\label{eq:translationlimit}
\lim_{n \to +\infty} 
\sup_{ |\tau| \leqs \frac12}
\int_{\frac{\tau+1}{n}}^{T + \frac{\tau-1}{n}} \left\| f \left( t - \frac{\tau}{n}\right) - f(t) \right\|_{M_s} \, \dd t = 0. 
\end{equation}

We regularize $f$ with respect to $t \in [0,T]$ as 
\begin{equation*}
f_n (t) = \psi_n * \left( f \chi_n \right) (t) \in C_c^\infty( (0,T), \cS(\rr d))
\end{equation*}
where $\chi_n \in C_c^\infty(\ro)$ is the indicator function for the interval $[\frac1n,T-\frac1n] \subseteq \ro$, 
$\psi \in C_c^\infty(\ro)$, $\psi \geqs 0$, $\supp \psi \subseteq [-\frac12,\frac12]$, $\int_\ro \psi (x) \, \dd x = 1$, and
$\psi_n(x) = n \psi (n x)$. 

Writing 
\begin{align*}
f_n (t) - f (t)
= \int_{-\frac1{2n}}^\frac1{2n} \psi_n(\tau) \Big( \big( f (t - \tau) - f(t) \big) \chi_n (t - \tau) + f(t) \big( \chi_n (t - \tau) - 1 \big) \Big) \, \dd \tau
\end{align*}
we may estimate 
\begin{equation*}
\int_0^T \| f_n (t) - f (t) \|_{M_s} \, \dd t \leqs I_n + J_n
\end{equation*}
where 
\begin{align*}
I_n 
& = \int_{-\frac1{2n}}^\frac1{2n} \psi_n(\tau) \int_{0}^T \| f (t-\tau) - f(t) \|_{M_s} \chi_n (t-\tau) \, \dd t \, \dd \tau \\
& = \int_{-\frac1{2n}}^\frac1{2n} \psi_n(\tau) \int_{\tau+\frac1n}^{T+\tau-\frac1n} \| f (t-\tau) - f(t) \|_{M_s} \, \dd t \, \dd \tau \\
& = \int_{-\frac12}^\frac12 \psi(\tau) \int_{\frac{\tau+1}{n}}^{T + \frac{\tau-1}{n}} \left\| f \left( t- \frac{\tau}{n} \right) - f(t) \right\|_{M_s} \, \dd t \, \dd \tau \\
& \quad \longrightarrow 0, \quad n \to + \infty
\end{align*}
using \eqref{eq:translationlimit}.  
Finally 
\begin{align*}
J_n 
& = \int_{-\frac1{2n}}^\frac1{2n} \psi_n(\tau) \int_{0}^T \| f (t) \|_{M_s}  \left( 1 - \chi_n (t-\tau) \right) \, \dd t \, \dd \tau \\
& = \int_{-\frac12}^\frac12 \psi (\tau) \int_{0}^T \| f(t) \|_{M_s}  \left( 1 - \chi_n \left( t- \frac{\tau}{n} \right) \right) \, \dd t \, \dd \tau \\
& = \int_{-\frac12}^\frac12 \psi (\tau) \left( \int_{0}^{\frac{\tau+1}{n}} + \int_{T + \frac{\tau-1}{n}}^T  \right) \| f(t) \|_{M_s} \, \dd t \, \dd \tau \\
& \leqs \int_{-\frac12}^\frac12 \psi (\tau) \left( \int_{0}^{\frac{3}{2 n}} + \int_{T - \frac{3}{2 n}}^T  \right) \| f(t) \|_{M_s} \, \dd t \, \dd \tau \\
& = \left( \int_{0}^{\frac{3}{2 n}} + \int_{T - \frac{3}{2 n}}^T  \right) \| f(t) \|_{M_s} \, \dd t  \\
& \quad \longrightarrow 0, \quad n \to + \infty. 
\end{align*}
We have shown \eqref{eq:regconvergence1}. 
\end{proof}

\begin{rem}\label{rem:neginterval2}
Again we note (cf. Remark \ref{rem:neginterval1}) that 
Lemma \ref{lem:approxmodulation} is true also when we 
replace the interval $[0,T]$ with $[-T,T]$. 
\end{rem}

We have now finally arrived at a point where we may state and prove the existence and uniqueness of 
solutions to \eqref{eq:anisoschrodeq} that are continuous on the spaces $M_s$. 

\begin{thm}\label{thm:uniquesolutionMsigma}
Let $T > 0$, 
$k,m \in \no \setminus 0$, $\sigma = \frac{k}{m}$, $a \in G^{1 + \sigma,\sigma}$, suppose
\begin{equation*}
\im \, a (x,\xi) \leqs C, \quad (x,\xi) \in T^* \rr d, 
\end{equation*}
for $C > 0$, and let $s \in \ro$.  
If $u_0 \in M_s$ and $f \in L^1([0,T], M_s)$, 
then the equation \eqref{eq:anisoschrodeq} has a unique solution 
$u \in C ([0,T], M_s)$. 
\end{thm}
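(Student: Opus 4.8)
The plan is to adapt \cite[Theorem~23.1.2]{Hormander1} to the anisotropic Shubin setting, the two main ingredients being the a priori estimate of Lemma \ref{lem:wellposedness} --- applied both to $L := \partial_t + i a^w$ and to a time-reversed adjoint operator --- and the approximation Lemma \ref{lem:approxmodulation}. Throughout I would use that $M_{-s} = (M_s)^*$ with duality extending the $L^2$ pairing, and that $L^1([0,T], M_{-s})^* = L^\infty([0,T], M_s)$ since $M_s$ is a Hilbert space.

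\emph{Existence for smooth data.} I would first treat data $u_0 \in \cS(\rr d)$ and $f \in C_c^\infty((0,T), \cS(\rr d))$ and construct the solution by duality. Put $L^* v := -\partial_t v - i \overline a^w v$ and consider the subspace $\cV = \{ L^* v : v \in C^1([0,T], \cS), \ v(T) = 0 \}$ of $L^1([0,T], M_{-s})$ together with the linear functional $\ell(L^* v) = \int_0^T (f(t), v(t)) \, \dd t + (u_0, v(0))$. The substitution $t \mapsto T - t$ turns the backward Cauchy problem for $L^*$ into a forward problem of the form \eqref{eq:anisoschrodeq} with Hamiltonian symbol $-\overline a \in G^{1+\sigma,\sigma}$ satisfying $\im(-\overline a) = \im a \leqs C$; hence Lemma \ref{lem:wellposedness} applies and, together with $v(T) = 0$, yields $\sup_{0 \leqs t \leqs T} \| v(t) \|_{M_{-s}} \lesssim \| L^* v \|_{L^1([0,T], M_{-s})}$. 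This would show at once that $\ell$ is well defined on $\cV$ (namely $L^* v = 0 \Rightarrow v \equiv 0$) and bounded by $\| L^* v \|_{L^1([0,T], M_{-s})}$. Extending $\ell$ by Hahn--Banach and invoking the duality above I would obtain $u \in L^\infty([0,T], M_s)$ with
\[
\int_0^T (u(t), L^* v(t)) \, \dd t = \int_0^T (f(t), v(t)) \, \dd t + (u_0, v(0))
\]
for all admissible $v$; after integration by parts in $t$ this is precisely the weak form of $\partial_t u + i a^w u = f$, $u(0) = u_0$. Since $u_0$ and $f$ are Schwartz, the same construction gives $u \in L^\infty([0,T], M_{s'})$ for every $s' \in \ro$, hence $u \in L^\infty([0,T], \cS)$; then $\partial_t u = -i a^w u + f \in L^\infty([0,T], \cS)$ forces $u$ to be Lipschitz, hence continuous, with values in $\cS$, and bootstrapping once more gives $u \in C^1([0,T], \cS)$ (in fact $u$ is smooth in $t$), so $u$ is a classical solution to which Lemma \ref{lem:wellposedness} applies.

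\emph{General data, and uniqueness.} For $u_0 \in M_s$ and $f \in L^1([0,T], M_s)$ I would pick $u_0^{(n)} \in \cS$ with $u_0^{(n)} \to u_0$ in $M_s$ and, by Lemma \ref{lem:approxmodulation}, $f_n \in C_c^\infty((0,T), \cS)$ with $f_n \to f$ in $L^1([0,T], M_s)$. Let $u_n \in C^1([0,T], \cS)$ be the corresponding smooth solutions from the previous step. Since $u_n - u_\ell$ solves \eqref{eq:anisoschrodeq} with data $u_0^{(n)} - u_0^{(\ell)}$, $f_n - f_\ell$, the estimate \eqref{eq:wellposedinequality} gives
\[
\sup_{0 \leqs t \leqs T} \| u_n(t) - u_\ell(t) \|_{M_s} \lesssim \| u_0^{(n)} - u_0^{(\ell)} \|_{M_s} + \| f_n - f_\ell \|_{L^1([0,T], M_s)} \longrightarrow 0,
\]
so $(u_n)$ is Cauchy in $C([0,T], M_s)$ and converges to a solution $u \in C([0,T], M_s)$ of \eqref{eq:anisoschrodeq} (pass to the limit in the weak equation). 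For uniqueness, suppose $w \in C([0,T], M_s)$ solves \eqref{eq:anisoschrodeq} with $u_0 = 0$, $f = 0$; then $\partial_t w = - i a^w w \in C([0,T], M_{s-1-\sigma})$ by Proposition \ref{prop:continuitySobolev}, so $w \in C^1([0,T], M_{s-1-\sigma})$. For arbitrary $g \in C_c^\infty((0,T), \cS)$ I would solve $L^* v = g$, $v(T) = 0$ --- covered, after time reversal, by the existence step applied with symbol $-\overline a$, giving $v \in C^1([0,T], \cS)$ --- so that $t \mapsto (w(t), v(t))$ is $C^1$ and, using $(a^w w, v) = (w, \overline a^w v)$ together with $w(0) = v(T) = 0$, integration by parts yields $\int_0^T (w(t), g(t)) \, \dd t = 0$. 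As $g$ is arbitrary, $w \equiv 0$.

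I expect the principal obstacle to be the duality construction for smooth data: one must fix the correct pair of spaces $L^1([0,T], M_{-s})$ and $L^\infty([0,T], M_s)$, verify that $\ell$ is well defined and bounded, and --- crucially --- apply Lemma \ref{lem:wellposedness} to the time-reversed adjoint problem, whose symbol $-\overline a$ has imaginary part bounded \emph{above} precisely because of the one-sided hypothesis $\im a \leqs C$ (so no two-sided bound on $\im a$ is needed, and the Cauchy problem is genuinely irreversible in time). The bootstrap to smoothness and the passage to the limit I expect to be routine.
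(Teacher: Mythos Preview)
Your proposal is correct and follows essentially the same route as the paper --- Hahn--Banach duality using the time-reversed adjoint estimate for smooth data, then approximation via Lemma~\ref{lem:approxmodulation} for general data --- so there is no substantive gap.

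Two minor differences worth noting. First, the paper tests against $\psi \in C_c^\infty((0,T)\times\rr d)$ (so $\psi(0)=\psi(T)=0$) and recovers the initial condition afterwards by writing $g(t)=u_0+\int_0^t \partial_t u$; your inclusion of the boundary term $(u_0,v(0))$ in the functional, with test functions satisfying only $v(T)=0$, is a cleaner variant that identifies $u(0)=u_0$ directly once continuity in $t$ is established. Second, for uniqueness the paper takes the shorter path: from $w\in C([0,T],M_s)$ and $\partial_t w=-i a^w w$ one gets $w\in C([0,T],M_s)\cap C^1([0,T],M_{s-1-\sigma})$, so Lemma~\ref{lem:wellposedness} applies \emph{directly} to $w$ with $f=0$, $w(0)=0$, giving $w\equiv 0$ in $M_{s-1-\sigma}$ and hence in $M_s$. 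Your duality argument for uniqueness is correct but unnecessarily indirect.
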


\begin{proof}
First we assume $u_0 \in \cS$ and $f \in C_c^\infty ( (0,T), \cS( \rr d) )$.  

Let $\psi \in C_c^\infty( (0,T) \times \rr d)$. 
With $\psi(t) = \psi(t,\cdot)$ we have 
$\psi (t), \partial_t \psi (t), \overline{a}^w \psi(t)  \in C( (0,T), \cS )$. 
Let $\nu \in \ro$.
Lemma \ref{lem:wellposedness} applied to
$t \mapsto \psi(T-t,\cdot)$ and $- \overline a$ gives 
\begin{align*}
\sup_{0 \leqs t \leqs T} \| \psi(t) \|_{M_{-\nu}}
& \lesssim \int_0^T \| - \partial_t \psi(T-t,\cdot) - i \overline a^w \psi(T-t,\cdot) \|_{M_{-\nu}} \, \dd t \\
& = \int_0^T \| \partial_t \psi(t,\cdot) + i \overline a^w \psi(t,\cdot) \|_{M_{-\nu}} \, \dd t. 
\end{align*}

This implies 
\begin{align*}
\left| \int_0^T ( f (t), \psi(t) ) \, \dd t \right|
& \leqs
\| f \|_{L^1([0,T], M_{\nu})} \| \psi \|_{L^\infty([0,T], M_{-\nu})} 
\lesssim \sup_{0 \leqs t \leqs T} \| \psi(t) \|_{M_{-\nu}} \\
& \lesssim \int_0^T \| \partial_t \psi(t,\cdot) + i \overline a^w \psi(t,\cdot) \|_{-\nu} \, \dd t. 
\end{align*}
Thus 
\begin{equation*}
L^1( (0,T], M_{-\nu}) \ni -\partial_t \psi(t,\cdot) - i \overline a^w \psi(t,\cdot)
\mapsto \int_0^T ( f (t), \psi(t) ) \, \dd t
\end{equation*}
is an anti-linear continuous functional. 
By the Hahn--Banach theorem it can be extended to a functional on $L^1( (0,T], M_{-\nu})$. 
From \cite[Theorem~IV.1 and Corollary~IV.4]{Diestel1} we know that the dual space of $L^1( (0,T], M_{-\nu})$ 
can be identified with $L^\infty( (0,T], M_\nu)$ through the natural pairing. 
Hence there exists $u \in L^\infty( (0,T], M_\nu) \subseteq \cD' ( (0,T) \times \rr d) $ such that 
\begin{align*}
\int_0^T ( f (t), \psi (t)) \, \dd t 
& = \int_0^T (u (t), -\partial_t \psi (t,\cdot) - i \overline a^w \psi (t,\cdot) ) \, \dd t \\
& = \int_0^T (\partial_t u (t) + i a^w u (t), \psi (t) ) \, \dd t. 
\end{align*}

It follows from this argument that $\partial_t u + i a^w u = f$
in $\cD'( (0,T) \times \rr d)$. 
From $u \in L^\infty( (0,T], M_\nu)$, $a \in G^{1 + \sigma,\sigma}$ and 
Proposition \ref{prop:continuitySobolev}
it follows that $\partial_t u \in L^\infty( (0,T], M_{\nu-(1+\sigma)})$. 
If we set $g(0) = u_0$ and
\begin{equation*}
g(t) = \int_0^t \partial_t u(\tau) \, \dd \tau + u_0, \quad 0 < t \leqs T,  
\end{equation*}
then $g \in C([0,T], M_{\nu-(1+\sigma)})$, and it follows from Lebesgue's differentiation theorem for Bochner integrals
\cite[Theorem~II.2.9]{Diestel1} that $g'(t) = \partial_t u(t)$ for almost all $t \in [0,T]$. 

If $\psi \in C_c^\infty( (0,T) \times \rr d)$ then we obtain from this
\begin{align*}
(  u, \partial_t \psi) 
& = - ( \partial_t u, \psi) 
= - \int_{\rr d} \int_0^T \partial_t g (t,x) \overline{\psi(t,x)} \, \dd t \ \dd x 
= (g, \partial_t \psi)
\end{align*}
which shows that $u = g \in C( [0,T], M_{\nu-(1+\sigma)})$. 
Now $\partial_t u + i a^w u = f$ and $a \in G^{1 + \sigma,\sigma}$ yields 
$u \in C^1( [0,T], M_{\nu-2(1+\sigma)})$. 
We may now apply Lemma \ref{lem:wellposedness} and conclude
\begin{equation*}
\sup_{0 \leqs t \leqs T} \| u(t) \|_{M_{\nu - 2(1+\sigma)}} 
\lesssim 
\| u_0 \|_{M_{\nu - 2(1+\sigma)}} 
+ \int_0^T \| f(t) \|_{M_{\nu - 2(1+\sigma)}} \, \dd t. 
\end{equation*}

Since $\nu \in \ro$ is arbitrary we get the following conclusion. 
If $u_0 \in \cS$ and $f \in C_c^\infty ( (0,T), \cS( \rr d) )$ then 
for any $\nu \in \ro$ there exists a solution $u \in C ( [0,T] , M_{\nu+1+\sigma}) \cap C^1 ( [0,T] , M_{\nu})$ 
to \eqref{eq:anisoschrodeq} such that 
\begin{equation}\label{eq:solutionSchwartz}
\sup_{0 \leqs t \leqs T} \| u(t) \|_{M_{\nu} }
\lesssim \| u_0 \|_{M_{\nu}} 
+ \int_0^T \| f(t) \|_{M_\nu} \, \dd t.
\end{equation}

If $u_0 \in M_s$ and $f \in L^1( [0,T], M_s)$ we take sequences $(u_n)_{n = 1}^{\infty} \subseteq \cS$ 
and $( f_n)_{n = 1}^{\infty} \subseteq C_c^\infty ((0,T), \cS( \rr d) )$
such that $\| u_n - u_0 \|_{M_s} \to 0$ 
and $\|f_n - f \|_{L^1([0,T], M_s)} \to 0$
as $n \to +\infty$. 
The former is possible due to \cite[Proposition~11.3.4]{Grochenig1}, and 
the latter thanks to Lemma \ref{lem:approxmodulation}. 
By the first part of the proof there exists a 
sequence $(u_n(t))_{n = 1}^{+\infty} \subseteq C( [0,T], M_{s+1+\sigma})$
such that $\partial_t u_n (t) + i a^w u_n (t) = f_n (t) $ and $u_n(0) = u_n$ for each $n \geqs 1$. 
By \eqref{eq:solutionSchwartz} with $\nu = s$ the sequence $(u_n (t) )_n$ is a Cauchy sequence in $C( [0,T], M_s)$. 
It follows that $( \partial_t u_n(t))_{n = 1}^{+\infty} \subseteq C( [0,T], M_{s})$
is a Cauchy sequence in $L^1( [0,T], M_{s-1-\sigma})$. 

The sequence $(u_n (t) )_n$ converges in $C( [0,T], M_s)$ to $u (t) \in C( [0,T], M_s)$,
and the sequence $(\partial_t u_n (t) )_n$ converges in $L^1( [0,T], M_{s-1-\sigma})$ to $v(t) \in L^1( [0,T], M_{s-1-\sigma})$, 
$v + i a^w u = f$ in $L^1( [0,T], M_{s-1-\sigma})$, and $u(0) = u_0$. 

If $\psi \in C_c^\infty( (0,T) \times \rr d)$ then 
\begin{align*}
(  \partial_t u,\psi) 
& = - ( u, \partial_t  \psi) 
= - \lim_{n \to \infty} \int_0^T ( u_n (t) , \partial_t  \psi (t,\cdot ) ) \, \dd t \\
& = \lim_{n \to \infty} \int_0^T ( \partial_t   u_n (t) , \psi (t,\cdot ) ) \, \dd t \\
& = \int_0^T ( v (t) , \psi (t,\cdot ) ) \, \dd t 
= (v, \psi)
\end{align*}
which shows that $v = \partial_t u$ in $L^1( [0,T], M_{s-1-\sigma})$. 
We conclude that $\partial_t u + i a^w u = f$ in $L^1( [0,T], M_{s-1-\sigma})$, 
$u(0) = u_0$, $u \in C( [0,T], M_{s})$, 
and
\begin{equation*}
\sup_{0 \leqs t \leqs T} \| u(t) \|_{M_{s} }
\lesssim \| u_0 \|_{M_{s}} 
+ \int_0^T \| f(t) \|_{M_s} \, \dd t. 
\end{equation*}

It remains to prove the uniqueness of the solution. 
Suppose $u \in C( [0,T], M_s)$,  
$\partial_t u + i a^w u = 0$ and $u(0) = 0$.
Then $u \in C^1( [0,T], M_{s-1-\sigma})$ by Proposition \ref{prop:continuitySobolev},  
and thus by Lemma \ref{lem:wellposedness}
we have $u(t) = 0$ in $M_{s-1-\sigma}$, 
which implies $u(t) = 0$ in $M_{s}$, 
for each $t \in [0,T]$. 
\end{proof}

By Remarks \ref{rem:neginterval1}, \ref{rem:timereversal}, \ref{rem:neginterval2}
and straightforward modifications in the proof of Theorem \ref{thm:uniquesolutionMsigma}
we may strengthen the assumption on $\im \,a$, reverse the time direction and obtain results for the equation
\begin{equation}\label{eq:anisoschrodeq2}
\left\{
\begin{array}{rl}
\partial_t u(t,x) + i a^w(x,D) u (t,x) & = f(t,x), \quad x \in \rr d, \quad t \in [-T,T] \setminus 0, \\
u(0,\cdot) & = u_0. 
\end{array}
\right.
\end{equation}

\begin{cor}\label{cor:uniquesolutionMsigma}
Let $T > 0$, 
$k,m \in \no \setminus 0$, $\sigma = \frac{k}{m}$, $a \in G^{1 + \sigma, \sigma}$, suppose
\begin{equation*}
|\im \, a (x,\xi) | \leqs C, \quad (x,\xi) \in T^* \rr d, 
\end{equation*}
for $C > 0$, and let $s \in \ro$.  
If $u_0 \in M_s$ and $f \in L^1([-T,T], M_s)$, 
the equation \eqref{eq:anisoschrodeq2} has a unique solution 
$u \in C ([-T,T], M_s)$. 
\end{cor}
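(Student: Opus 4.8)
The plan is to reduce the two-sided problem \eqref{eq:anisoschrodeq2} to two copies of the one-sided problem \eqref{eq:anisoschrodeq} already settled in Theorem \ref{thm:uniquesolutionMsigma}: one directly on $[0,T]$ and one obtained from $[-T,0]$ by a time reversal. The crucial point is that the strengthened hypothesis $|\im a(x,\xi)| \leqs C$ (rather than merely $\im a \leqs C$) is exactly what brings the time-reversed equation within the scope of Theorem \ref{thm:uniquesolutionMsigma}.

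First I would record the time-reversal transformation. If $u \in C([-T,T], M_s)$ solves $\partial_t u + i a^w u = f$ on $[-T,0)$ with $u(0,\cdot) = u_0$, set $w(t) = u(-t)$ and $g(t) = -f(-t)$ for $t \in [0,T]$. Since the Weyl quantization is linear, $(-a)^w = -a^w$, and a direct differentiation gives $\partial_t w(t) = -\partial_t u(-t) = -f(-t) + i a^w u(-t) = g(t) + i(-a)^w w(t)$, that is $\partial_t w + i(-a)^w w = g$ on $(0,T]$ with $w(0,\cdot) = u_0$. The symbol $-a$ lies in $G^{1 + \sigma,\sigma}$, and $\im(-a) = -\im a \leqs C$ by the assumed lower bound on $\im a$, while $g \in L^1([0,T], M_s)$; hence Theorem \ref{thm:uniquesolutionMsigma} applies to the triple $(-a, u_0, g)$, and, reversing the computation, the correspondence $u \leftrightarrow w$ is a bijection between solutions on $[-T,0]$ of the original problem and solutions on $[0,T]$ of the reversed one.

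With this in hand, existence follows by gluing: apply Theorem \ref{thm:uniquesolutionMsigma} on $[0,T]$ to $\partial_t u_+ + i a^w u_+ = f|_{[0,T]}$, $u_+(0,\cdot) = u_0$ (legitimate since $\im a \leqs C$), obtaining a unique $u_+ \in C([0,T], M_s)$; apply it again on $[0,T]$ to the reversed problem for $w$, and set $u_-(t) = w(-t)$ so that $u_- \in C([-T,0], M_s)$ solves \eqref{eq:anisoschrodeq2} on $[-T,0)$ with $u_-(0,\cdot) = u_0$. Since $u_+(0,\cdot) = u_-(0,\cdot) = u_0$, the two pieces glue to a single $u \in C([-T,T], M_s)$ solving \eqref{eq:anisoschrodeq2}. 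For uniqueness, if $u \in C([-T,T], M_s)$ solves \eqref{eq:anisoschrodeq2} with $f = 0$ and $u_0 = 0$, then $u|_{[0,T]}$ is forced to vanish by the uniqueness assertion of Theorem \ref{thm:uniquesolutionMsigma}, and the transformation above together with the same uniqueness assertion (with $-a$ in place of $a$) forces $u|_{[-T,0]}$ to vanish as well, so $u \equiv 0$.

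The main obstacle is purely the bookkeeping of the time reversal: one must check that conjugating the equation by $t \mapsto -t$ replaces the Hamiltonian symbol $a$ by $-a$ (and not $\pm\overline a$) and that $-a$ still satisfies both the order condition $-a \in G^{1+\sigma,\sigma}$ and the one-sided bound $\im(-a) \leqs C$ required by Theorem \ref{thm:uniquesolutionMsigma} — and the latter is available precisely because of the two-sided assumption $|\im a| \leqs C$. Alternatively, one may avoid the gluing and simply rerun the proof of Theorem \ref{thm:uniquesolutionMsigma} verbatim on the interval $[-T,T]$, invoking the two-sided versions of Lemmas \ref{lem:approximation} and \ref{lem:approxmodulation} recorded in Remarks \ref{rem:neginterval1} and \ref{rem:neginterval2}, the two-sided a priori estimate of Remark \ref{rem:timereversal}, and Remark \ref{rem:abssymbolbound} to guarantee that the symbol $b$ produced by conjugating $a^w$ with the localization operator $A_s$ retains a two-sided bound on $\im b$; both routes rest on the same strengthened hypothesis on $\im a$.
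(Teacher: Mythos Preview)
Your proposal is correct. The paper does not give a formal proof of this corollary but simply states that it follows ``by Remarks \ref{rem:neginterval1}, \ref{rem:timereversal}, \ref{rem:neginterval2} and straightforward modifications in the proof of Theorem \ref{thm:uniquesolutionMsigma}'', which is precisely your second route; your primary gluing argument via the substitution $w(t)=u(-t)$ and the symbol $-a$ is a clean repackaging of exactly the same time-reversal idea, and you correctly identify that the two-sided bound $|\im a|\leqs C$ is what makes $\im(-a)\leqs C$ so that Theorem \ref{thm:uniquesolutionMsigma} applies to the reversed problem.
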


Since
\begin{equation*}
L^1 ([-T,T], \cS(\rr {d})) = \bigcap_{s \in \ro} L^1 ([-T,T], M_s), \quad
C ([-T,T], \cS(\rr {d})) = \bigcap_{s \in \ro} C ([-T,T], M_s)
\end{equation*}
we get the following corollary taking into account \eqref{eq:SSSchwartz}.

\begin{cor}\label{cor:propagationSchwartz}
Let $T > 0$, 
$k,m \in \no \setminus 0$, $\sigma = \frac{k}{m}$, $a \in G^{1 + \sigma, \sigma}$, and suppose
\begin{equation*}
| \im \, a (x,\xi) | \leqs C, \quad (x,\xi) \in T^* \rr d, 
\end{equation*}
for $C > 0$. 
If $f \in L^1 ([-T,T], \cS(\rr {d}))$ and $u_0 \in \cS(\rr d)$ then the unique solution
to \eqref{eq:anisoschrodeq2} satisfies $u \in C ([-T,T], \cS(\rr {d}))$. 
\end{cor}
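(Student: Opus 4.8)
The plan is to obtain the statement as an immediate consequence of Corollary \ref{cor:uniquesolutionMsigma}, combined with the identification $\cS(\rr d) = \bigcap_{s \in \ro} M_s$ from \eqref{eq:SSSchwartz} and the two equalities $L^1([-T,T], \cS(\rr d)) = \bigcap_{s \in \ro} L^1([-T,T], M_s)$ and $C([-T,T], \cS(\rr d)) = \bigcap_{s \in \ro} C([-T,T], M_s)$ recorded just above the corollary.

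First I would note that the hypotheses $u_0 \in \cS(\rr d)$ and $f \in L^1([-T,T], \cS(\rr d))$ imply $u_0 \in M_s$ and $f \in L^1([-T,T], M_s)$ for every $s \in \ro$. Thus Corollary \ref{cor:uniquesolutionMsigma} applies for each fixed $s$ and produces a unique solution $u_s \in C([-T,T], M_s)$ of \eqref{eq:anisoschrodeq2}.

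Next I would verify that the family $(u_s)_{s \in \ro}$ is consistent. If $s_1 \geqs s_2$ then the continuous inclusion $M_{s_1} \subseteq M_{s_2}$ gives $u_{s_1} \in C([-T,T], M_{s_2})$, and $u_{s_1}$ solves \eqref{eq:anisoschrodeq2} with the same data $(u_0,f)$; the uniqueness clause at level $s_2$ then forces $u_{s_1} = u_{s_2}$. Hence there is a single $u$, independent of $s$, lying in $\bigcap_{s \in \ro} C([-T,T], M_s) = C([-T,T], \cS(\rr d))$ and solving \eqref{eq:anisoschrodeq2}. Uniqueness of this solution within $C([-T,T], \cS(\rr d))$ follows from its uniqueness within $C([-T,T], M_s)$ for any fixed $s$, since $C([-T,T], \cS(\rr d)) \subseteq C([-T,T], M_s)$.

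I do not expect a real obstacle here: the argument is essentially a soft functional-analytic observation once Corollary \ref{cor:uniquesolutionMsigma} is available, and it does not require invoking Proposition \ref{prop:continuitySobolev} or the other machinery afresh. The only point requiring a moment of care is the consistency of the solutions obtained at different orders $s$, which, as sketched, reduces to the continuous embeddings $M_{s_1} \subseteq M_{s_2}$ for $s_1 \geqs s_2$ together with the uniqueness part of Corollary \ref{cor:uniquesolutionMsigma}.
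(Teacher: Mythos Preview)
Your proposal is correct and follows exactly the paper's approach: the paper derives the corollary directly from Corollary~\ref{cor:uniquesolutionMsigma} together with \eqref{eq:SSSchwartz} and the intersection identities displayed just above the statement. Your explicit verification that the solutions $u_s$ at different orders coincide (via the embeddings $M_{s_1}\subseteq M_{s_2}$ and uniqueness at level $s_2$) is a welcome clarification of a step the paper leaves implicit.
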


Finally we state a result dual to Corollary \ref{cor:propagationSchwartz}. 

\begin{cor}\label{cor:propagationtempered}
Let $T > 0$, 
$k,m \in \no \setminus 0$, $\sigma = \frac{k}{m}$, $a \in G^{1 + \sigma, \sigma}$, and suppose
\begin{equation*}
| \im \, a (x,\xi) | \leqs C, \quad (x,\xi) \in T^* \rr d, 
\end{equation*}
for $C > 0$. 
If $u_0 \in \cS'$ then by \eqref{eq:SSSchwartz} there exists $s \in \ro$ such that $u_0 \in M_s$. 
If $f \in L^1 ([-T,T], M_s)$ then the unique solution
to \eqref{eq:anisoschrodeq2} satisfies $u \in C ([-T,T], M_s)$. 
\end{cor}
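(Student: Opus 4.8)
The plan is to reduce this corollary directly to Corollary~\ref{cor:uniquesolutionMsigma}, since no new analysis is required. First I would use the identity $\cS'(\rr d) = \bigcup_{s \in \ro} M_s$ from \eqref{eq:SSSchwartz} to fix some $s \in \ro$ with $u_0 \in M_s$; this is exactly the $s$ appearing in the statement, and by hypothesis $f \in L^1([-T,T], M_s)$ for the same $s$. Since $|\im a(x,\xi)| \leqs C$ gives in particular the two-sided bound $-C \leqs \im a(x,\xi) \leqs C$, the hypotheses of Corollary~\ref{cor:uniquesolutionMsigma} are met for this value of $s$, and that corollary then produces a unique $u \in C([-T,T], M_s)$ solving \eqref{eq:anisoschrodeq2} with initial datum $u_0$ and right-hand side $f$.

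The one bookkeeping point I would make explicit is that the solution is independent of the choice of $s$: if $u_0 \in M_{s_1} \cap M_{s_2}$ with $s_1 \geqs s_2$ and $f \in L^1([-T,T], M_{s_1}) \subseteq L^1([-T,T], M_{s_2})$, then through the continuous inclusion $M_{s_1} \subseteq M_{s_2}$ the solution built in $C([-T,T], M_{s_1})$ is also an element of $C([-T,T], M_{s_2})$ solving the same Cauchy problem, so by the uniqueness part of Corollary~\ref{cor:uniquesolutionMsigma} it coincides with the solution constructed in the coarser space. Hence ``the unique solution'' in the statement is well defined, independently of $s$, and this is essentially the only step that requires any argument at all.

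Finally, to justify calling the result dual to Corollary~\ref{cor:propagationSchwartz}, I would add the remark that the homogeneous solution operator $u_0 \mapsto u$ is the transpose of the propagator attached to the formal adjoint Hamiltonian $-\overline a$, whose imaginary part $\im(-\overline a) = \im a$ again satisfies a two-sided bound; by Corollary~\ref{cor:propagationSchwartz} this adjoint propagator maps $\cS(\rr d)$ continuously into $C([-T,T], \cS(\rr d))$, so its transpose maps $\cS'(\rr d)$ continuously into $C([-T,T], \cS'(\rr d))$, and by the uniqueness just discussed it agrees on each $M_s$ with the solution furnished here. The only computational item hidden in that remark is matching the Duhamel term for $f$ against the duality pairing, which is already implicit in the proof of Theorem~\ref{thm:uniquesolutionMsigma}; apart from it there is no real obstacle, the substance of the statement having been settled in Corollary~\ref{cor:uniquesolutionMsigma}.
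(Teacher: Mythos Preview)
Your reduction to Corollary~\ref{cor:uniquesolutionMsigma} is exactly what the paper intends: the corollary is stated without proof because the statement itself already invokes \eqref{eq:SSSchwartz} to pick $s$, after which Corollary~\ref{cor:uniquesolutionMsigma} applies verbatim. Your additional remarks on independence of $s$ and on the duality interpretation are correct but go beyond what the paper records; the core argument is the same.
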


\section{Propagation of anisotropic Gabor wave front sets for Schr\"odinger type equations}\label{sec:propagation}

The following lemma is an anisotropic version of \cite[Lemma~3.6]{Cappiello5}. 
Its proof is similar so we omit it (cf. also \cite[Lemma~3.2]{Rodino4}). 
The lemma will be used in the proof of Proposition \ref{prop:commutator1} which is essential 
for our main result Theorem \ref{thm:propagationWFs}. 

\begin{lem}\label{lem:pseudocalculusparameter}
Suppose $\sigma > 0$, $r_j(t) \in C([-T,T], G^{m_j,\sigma})$ for $j \geqs 0$, where $(m_j)_{j=0}^\infty \subseteq \ro$ is decreasing, 
$[-T,T] \ni t \mapsto \partial_t r_j(t)(z)$ is continuous for each $z \in T^* \rr d$,
and $\partial_t r_j(t) \in L^\infty([-T,T], G^{m_j,\sigma})$ for all $j \geqs 0$. 
Then there exists $r(t) \in C([-T,T], G^{m_0,\sigma})$ such that for any $n \geqs 1$
\begin{equation*}
r(t) - \sum_{j=0}^{n-1} r_j(t) \in C([-T,T], G^{m_n,\sigma}).
\end{equation*}
We write $r(t) \sim \sum_{j=0}^{\infty} r_j(t)$. 
\end{lem}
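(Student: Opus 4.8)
The plan is to run the standard Borel-type asymptotic summation that underlies \cite[Lemma~3.2]{Rodino4}, inserting anisotropically scaled cutoffs and keeping all estimates uniform in $t \in [-T,T]$, in the spirit of \cite[Lemma~3.6]{Cappiello5}. First I would fix $\psi \in C^\infty(\rr{2d})$ with $0 \leqs \psi \leqs 1$, $\psi \equiv 0$ near the origin and $\psi \equiv 1$ outside a bounded set, chosen via the smooth anisotropic norm of \cite[Section~3]{Rodino4} so that $\{\psi \neq 1\}$ and $\{\nabla\psi \neq 0\}$ lie where $\theta_\sigma$ is bounded. For a sequence $\epsilon_j \downarrow 0$ to be selected I set $\psi_j = \psi \circ \Lambda_\sigma(\epsilon_j)$, that is $\psi_j(x,\xi) = \psi(\epsilon_j x, \epsilon_j^\sigma \xi)$; from the identity $\theta_\sigma(\epsilon_j x, \epsilon_j^\sigma \xi) = 1 + \epsilon_j(\theta_\sigma(x,\xi) - 1)$ one reads off $\theta_\sigma \gtrsim \epsilon_j^{-1}$ on $\supp \psi_j$ and $\theta_\sigma \asymp \epsilon_j^{-1}$ on $\supp\nabla\psi_j$, so $\psi_j \in G^{0,\sigma}$ with seminorms bounded uniformly in $j$, and $1 - \psi_j \in C_c^\infty(\rr{2d})$. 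Moreover at each point only finitely many $\psi_j$ are nonzero, so any series $\sum_j \psi_j r_j(t)$ defines a function in $C^\infty(\rr{2d})$.

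Next I would establish the decay estimate: for every Fréchet seminorm index $N$, every $\mu > m_j$, and all $\alpha,\beta \in \nn d$, Leibniz' rule together with \eqref{eq:symbolderivative1} for $r_j(t)$ and for $\psi_j$ and the bound $\theta_\sigma(x,\xi)^{m_j - \mu} \lesssim \epsilon_j^{\mu - m_j}$ on $\supp\psi_j$ yield $\sup_{|t|\leqs T} \| \psi_j r_j(t) \|_{N,\,G^{\mu,\sigma}} \lesssim \epsilon_j^{\mu - m_j} \sup_{|t|\leqs T}\|r_j(t)\|_{N,\,G^{m_j,\sigma}}$, the last supremum being finite since $t \mapsto r_j(t)$ is continuous on the compact interval into the Fréchet space $G^{m_j,\sigma}$, and the implied constant independent of $j$ and $t$. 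Since for each $n$ one has $m_n - m_j > 0$ for $j > n$, I can then choose $\epsilon_j \downarrow 0$ so rapidly that $\sup_{|t|\leqs T}\|\psi_j r_j(t)\|_{N,\,G^{m_n,\sigma}} \leqs 2^{-j}$ for every $n, N \leqs j$ (finitely many conditions at each step $j$), and set $r(t) = \sum_{j\geqs 0} \psi_j r_j(t)$.

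To finish I would fix $n \geqs 1$ and write $r(t) - \sum_{j=0}^{n-1} r_j(t) = \sum_{j=0}^{n-1}(\psi_j - 1) r_j(t) + \sum_{j\geqs n}\psi_j r_j(t)$. Each $(\psi_j - 1)r_j(t)$ is smooth with compact support, hence in $\cS(\rr{2d}) \subseteq G^{m_n,\sigma}$ by \eqref{eq:symbolintersection}, and continuous in $t$ because $G^{m_j,\sigma}$-convergence forces $C^\infty$-convergence on the fixed support of $\psi_j-1$; so the finite sum lies in $C([-T,T], G^{m_n,\sigma})$. For $j \geqs n$ one has $\psi_j r_j(t) \in G^{m_j,\sigma} \subseteq G^{m_n,\sigma}$ with $t \mapsto \psi_j r_j(t) \in G^{m_n,\sigma}$ continuous, and the $2^{-j}$-bound makes the partial sums Cauchy in $C([-T,T], G^{m_n,\sigma})$ uniformly in $t$; completeness of $G^{m_n,\sigma}$ and the continuity of uniform limits place the tail, hence $r(t) - \sum_{j<n}r_j(t)$, in $C([-T,T], G^{m_n,\sigma})$. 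Taking $n=1$ and adding back $r_0$ gives $r \in C([-T,T], G^{m_0,\sigma})$. Running the same estimates with $\partial_t r_j(t)$ in place of $r_j(t)$ — legitimate by the hypotheses that $t \mapsto \partial_t r_j(t)(z)$ is continuous for each $z$ and that $\partial_t r_j(t)$ is bounded in $G^{m_j,\sigma}$ — justifies term-by-term differentiation in $t$ and yields $\partial_t r(t) = \sum_j \psi_j \partial_t r_j(t)$ with $\partial_t r(t) \sim \sum_j \partial_t r_j(t)$ in the same sense.

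I expect the only genuine difficulty to be bookkeeping: arranging a single sequence $\epsilon_j$ that simultaneously forces convergence of the tail in $G^{m_n,\sigma}$ for \emph{every} $n$ and in every seminorm, and checking that $\theta_\sigma \gtrsim \epsilon_j^{-1}$ holds on $\supp\psi_j$ with exactly the right powers of $\epsilon_j$ — which is precisely where the anisotropic dilation $\Lambda_\sigma(\epsilon_j)$, rather than the Euclidean one, must be used. This, together with the passage to the time-parameter setting, is handled exactly as in \cite[Lemma~3.2]{Rodino4} and \cite[Lemma~3.6]{Cappiello5}, so no new ideas are required.
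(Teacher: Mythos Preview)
Your proposal is correct and is precisely the argument the paper has in mind: the paper omits the proof, pointing to \cite[Lemma~3.6]{Cappiello5} and \cite[Lemma~3.2]{Rodino4}, and your Borel-type summation with anisotropically dilated cutoffs $\psi_j = \psi \circ \Lambda_\sigma(\epsilon_j)$, estimates made uniform over the compact interval $[-T,T]$, is exactly that construction carried out in the anisotropic setting. Your closing paragraph on $\partial_t r(t)$ also anticipates the paper's remark immediately following the lemma.
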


Note that $r(t)$ is unique modulo an element in $C([-T,T], \cS(\rr {2d}))$.

If $r_j(t) \in L^\infty ([-T,T], G^{m_j,\sigma})$ for $j \geqs 0$ we abuse the notation $r(t) \sim \sum_{j=0}^{\infty} r_j(t)$
to mean 
\begin{equation*}
r(t) - \sum_{j=0}^{n-1} r_j(t) \in L^\infty ([-T,T], G^{m_n,\sigma})
\end{equation*}
for $n \geqs 1$. 
In this interpretation $r(t)$ is unique modulo an element in $L^\infty([-T,T], \cS(\rr {2d}))$. 
Thus in Lemma \ref{lem:pseudocalculusparameter}
it holds $\partial_t r(t) \sim \sum_{j=0}^{\infty} \partial_t r_j(t)$ in the latter sense. 

In the next result we use the cutoff function $\psi_\delta$ introduced prior to Proposition \ref{prop:compsymbolflow}. 

\begin{prop}\label{prop:commutator1}
Let $\delta > 0$, 
$k,m \in \no \setminus 0$, $\sigma = \frac{k}{m}$, 
and suppose 
that $a \in G^{1 + \sigma,\sigma}$, 
$a \sim \sum_{j = 0}^{\infty} a_j$, 
where 
$a_0 \in C^\infty(\rr {2d} \setminus 0)$ is real-valued, 
\begin{equation}\label{eq:a0homogeneity1}
a_0( \lambda x, \lambda^\sigma \xi)  = \lambda^{1+\sigma } a_0(x,\xi), \quad \lambda > 0, \quad (x,\xi) \in T^* \rr d \setminus 0,  
\end{equation}
and $a_j \in G^{(1+\sigma) (1- 2 j ), \sigma}$ for $j \geqs 1$. 
The Hamiltonian flow $\chi_t: T^* \rr d \setminus 0 \to T^* \rr d \setminus 0$
corresponding to the Hamiltonian $a_0$ is then defined for $-T \leqs t \leqs T$ with $T > 0$.

If $q_0 \in G^{0,\sigma}$ 
then 
there exists a function $t \mapsto q(t)$ such that 
$q(0) = \psi_\delta q_0$, 
\begin{align}
& q(t) \in C([-T,T], G^{0,\sigma}), \label{eq:qcont} \\
& q(t) \sim \sum_{j = 0}^{\infty} q_j (t), \quad q_j (t) \in C([-T,T], G^{- 2 j  (1+\sigma),\sigma}), \label{eq:qasymp} \\
& \partial_t q(t) \sim \sum_{j = 0}^{\infty} \partial_t q_j (t), \quad \partial_t q_j (t) \in L^\infty([-T,T], G^{- 2 j (1+\sigma),\sigma}), \label{eq:qderasymp} \\
& q_0(t) (x,\xi) = \psi_\delta (x,\xi) q_0( \chi_{-t} (x,\xi)), \quad (x,\xi) \in T^* \rr d, \quad t \in [-T,T], \label{eq:q0}
\end{align}
and $r(t) \in L^\infty ([-T,T], \cS(\rr {2d}))$ where 
\begin{equation*}
r(t)^w = q(t)^w \left( \partial_t + i a^w \right)
- \left( \partial_t +i a^w \right) q(t)^w. 
\end{equation*}
\end{prop}

\begin{proof}
The claim that the Hamiltonian flow $\chi_t(x,\xi) \in T^* \rr d \setminus 0$
corresponding to the Hamiltonian $a_0$ is defined for $-T \leqs t \leqs T$ with the same parameter $T > 0$
for all initial data $(x,\xi) \in T^* \rr d \setminus 0$ is a consequence of 
Proposition \ref{prop:homogeneoushamiltonian}.

We will design $q(t)$ such that \eqref{eq:qcont}, \eqref{eq:qasymp}, \eqref{eq:qderasymp} and \eqref{eq:q0} are satisfied,
and, noting that $\partial_t q(t)^w = q(t)^w \partial_t + \left( \partial_t q(t) \right)^w$, 
\begin{equation}\label{eq:commutatorschwartz1}
r(t) = i \left( q(t) \wpr a - a \wpr q(t) \right) - \partial_t q(t) 
\in L^\infty ([-T,T], \cS(\rr {2d}) ).
\end{equation}

By \cite[Theorem~18.5.4]{Hormander1} we have
\begin{equation}\label{eq:commutatorasymp1}
\begin{aligned}
& i \left( q(t) \wpr a - a \wpr q(t) \right) (x,\xi) \\
& \sim \sum_{j=0}^{\infty} \frac{(-1)^{j+1}}{(2j + 1)! 2^{ 2 j}} \left( \la \partial_x, \partial_\eta \ra - \la \partial_y, \partial_\xi \ra  \right)^{2 j + 1} 
q(t)(x,\xi) a (y,\eta) 
\Big|_{(y,\eta) = (x,\xi)} \\
& \sim \{ q(t), a \}(x,\xi) + \sum_{j=1}^{\infty} \frac{(-1)^{j+1}}{(2j + 1)! 2^{ 2 j }} \left( \la \partial_x, \partial_\eta \ra - \la \partial_y, \partial_\xi \ra  \right)^{2 j + 1} 
q(t)(x,\xi) a (y,\eta) \Big|_{(y,\eta) = (x,\xi)}
\end{aligned}
\end{equation}
where we use the Poisson bracket notation
\begin{equation*}
\{ q (t),a \} = \la \nabla_\xi q(t), \nabla_x  a \ra - \la \nabla_x q(t), \nabla_\xi  a \ra = \la \J \nabla_{x,\xi} q(t), \nabla_{x,\xi} a \ra. 
\end{equation*}

If we introduce for $j \geqs 0$ the bilinear differential operator
\begin{equation}\label{eq:poissonorderj}
\{ f,g \}_{j} (x,\xi) 
= (-1)^j \left( \la \partial_x, \partial_\eta \ra - \la \partial_y, \partial_\xi \ra  \right)^{j} 
f(x,\xi) g (y,\eta) \Big|_{(y,\eta) = (x,\xi)}
\end{equation}
then $\{ f,g \}_{1} = \{ f,g\}$, so $\{ f,g \}_{j}$ extends the Poisson bracket to higher order differential operators. 
Note that for $j \geqs 0$
\begin{equation}\label{eq:symbolpoisson}
a \in G^{m,\sigma}, \quad b \in G^{n,\sigma} \quad \Longrightarrow \quad \{ a, b \}_j \in G^{m + n - j(1+\sigma),\sigma}. 
\end{equation}

The notation \eqref{eq:poissonorderj} allows us to abbreviate \eqref{eq:commutatorasymp1} as 
\begin{equation*}
i \left( q(t) \wpr a - a \wpr q(t) \right) 
\sim \sum_{j=0}^{\infty} \frac{(-1)^{j}}{(2j + 1)! 2^{ 2 j }} \, \{ q(t),a \}_{2j+1}. 
\end{equation*}
Inserting $a \sim \sum_{j = 0}^{\infty} a_j$ and $q(t) \sim \sum_{j = 0}^{\infty} q_j (t)$, 
and collecting terms of order $j \geqs 0$ gives
\begin{equation}\label{eq:commutatorasymp2}
i \left( q(t) \wpr a - a \wpr q(t) \right) 
\sim \sum_{j=0}^{\infty} \sum_{k + n + m = j} \frac{(-1)^{m}}{(2m + 1)! 2^{ 2 m }} \, \{ q_k (t),a_n \}_{2m+1}
\end{equation}
since $\{ q_k (t),a_n \}_{2m+1} \in C([-T,T], G^{- 2 j  (1+\sigma),\sigma})$ when $k + n + m = j$. 

The remainder \eqref{eq:commutatorschwartz1} can now be written $r(t) \sim \sum_{j=0}^\infty r_j (t)$
as an asymptotic sum in $L^\infty ([-T,T], G^{0,\sigma})$ with 
\begin{equation}\label{eq:remainderterm}
r_j (t) = \sum_{k + n + m = j} \frac{(-1)^{m}}{(2m + 1)! 2^{ 2 m }} \, \{ q_k (t),a_n \}_{2m+1} - \partial_t q_j(t) \in L^\infty ([-T,T], G^{-2j(1+\sigma),\sigma})
\end{equation}
for $j \geqs 0$. 
In the proof we show how to pick $\{ q_j(t)\}_{j=0}^\infty$ with the stated properties so that 
$r(t) \in L^\infty ([-T,T], \cS(\rr {2d}) )$.

Set for $(x,\xi) \in T^* \rr d$ and $t \in [-T,T]$
\begin{equation*}
q_0 (t)(x, \xi)  =  \psi_\delta (x,\xi) q_0 ( \chi_{-t} (x,\xi) )
\end{equation*}
so that \eqref{eq:q0} is satisfied. 
The purpose of the factor $\psi_\delta$ is to make $q_0 (t)$ a well defined smooth function also around $(0,0) \in T^* \rr d$
where $\chi_{-t}$ may not be smooth. 
For each $(x,\xi) \in T^* \rr d$, $t \mapsto q_0 (t)(x, \xi) \in C([-T,T])$. 
By Proposition \ref{prop:compsymbolflow} we have $q_0 (t) \in G^{0,\sigma}$ uniformly for all $t \in [-T,T]$.

We write $q_0 (t) ( \chi_t (x, \xi) ) =   \psi_\delta ( \chi_t (x, \xi) ) q_0 ( x,\xi )$. 
Then differentiation with respect to $t$,
$\partial_t \chi_t(x,\xi) = \J \nabla a_0 ( \chi_t(x,\xi) )$ 
(cf. \eqref{eq:hamiltoncompact})
and the Chain Rule
give for $(x,\xi) \in T^* \rr d \setminus 0$
\begin{equation}\label{eq:q0der0}
\begin{aligned}
 \{ a_0, \psi_\delta \} ( \chi_{t} (x,\xi) ) q_0 ( x,\xi )
& = \la \nabla  \psi_\delta ( \chi_{t} (x,\xi) ),  \J \nabla a_0 ( \chi_t(x,\xi) ) \ra q_0 ( x,\xi ) \\
& = \la \nabla  \psi_\delta ( \chi_{t} (x,\xi) ),  \partial_t \chi_t(x,\xi) \ra q_0 ( x,\xi ) \\
& = \left( \partial_t q_0(t) \right) ( \chi_t (x, \xi) ) + \la \nabla q_0 (t) ( \chi_{t} (x,\xi) ), \J \nabla a_0 ( \chi_{t} (x,\xi) ) \ra \\
& = \left( \partial_t q_0(t) \right) ( \chi_t (x, \xi) ) - \{ q_0 (t), a_0 \} ( \chi_{t} (x,\xi) ). 
\end{aligned}
\end{equation}
Thus for all $(x,\xi) \in T^* \rr d$
\begin{equation*}
\partial_t q_0 (t)(x, \xi)  =  \{ q_0(t), a_0 \}(x,\xi) - \{ \psi_\delta, a_0 \} (x,\xi) q_0 ( \chi_{-t} (x,\xi) ). 
\end{equation*}
Note that the right hand side is supported in $\rr {2d} \setminus \rB_{\frac{\delta}{2}}$, 
and the second term is compactly supported in $\overline \rB_\delta \subseteq T^* \rr d$. 

The function $[-T,T] \ni t \mapsto \partial_t q_0 (t)(x, \xi)$ is continuous for each $(x,\xi) \in T^* \rr d$. 
Indeed the continuity of the first term
\begin{align*}
[-T,T] \ni t \mapsto \{ q_0 (t), a_0 \} (x,\xi) = -  \la \nabla q_0 (t) (x,\xi), \J \nabla a_0 ( x,\xi ) \ra
\end{align*}
is a consequence of \eqref{eq:smoothflow} and the chain rule, 
and the continuity of the second term has been verified above.

From 
$q_0 (t) \in G^{0,\sigma}$ and \eqref{eq:symbolpoisson} it now follows that $\partial_t q_0(t) \in L^\infty([-T,T], G^{0,\sigma})$, and then
the continuity of $[-T,T] \ni t \mapsto \partial_t q_0 (t)(x, \xi)$
and
the mean value theorem gives $q_0 (t) \in C( [-T,T],G^{0,\sigma})$. 
By \eqref{eq:remainderterm} we have
\begin{equation*}
r_0(t) = \{ q_0 (t), a_0 \} - \partial_t q_0(t) = \{ \psi_\delta, a_0 \} q_0 \circ \chi_{-t} \in L^\infty( [-T,T], G^{0 , \sigma})
\end{equation*}
which implies that $\supp r_0(t) \subseteq \overline \rB_\delta \subseteq T^* \rr d$ for all $t \in [-T,T]$ 
so in fact we have $r_0(t) \in L^\infty( [-T,T], C_c^{\infty} )$. 
This means that the principal symbol of $r(t)$ vanishes: 
$r_0(t) \sim 0$. 

Next we eliminate the second highest order term in \eqref{eq:remainderterm} $r_1(t) \in L^\infty( [-T,T], G^{- 2(1+\sigma) , \sigma})$ 
by a proper choice of $q_1(t) \in C( [-T,T], G^{- 2(1+\sigma) , \sigma})$.
The term in $C( [-T,T], G^{- 2(1+\sigma) , \sigma})$ in \eqref{eq:commutatorasymp2} is 
\begin{equation*}
\{ q_0(t), a_1 \} + \{ q_1(t), a_0 \} - \frac1{24} \{ q_0 (t),a_0 \}_{3}.
\end{equation*}
Define 
\begin{equation}\label{eq:rho1def}
\rho_1(t) = \{ q_0(t), a_1 \}  
- \frac1{24} \{ q_0 (t),a_0 \}_{3}
\in C( [-T,T], G^{- 2(1+\sigma) , \sigma})
\end{equation}
so that $\rho_1(t) + \{ q_1(t), a_0 \} $ is the term in $C( [-T,T], G^{- 2(1+\sigma) , \sigma})$ in \eqref{eq:commutatorasymp2}. 
Define
\begin{equation}\label{eq:q1def1}
q_1(t) ( \chi_t (x,\xi) ) 
= \int_0^t \rho_1 (\tau) ( \chi_\tau (x,\xi) ) \, \dd \tau
\end{equation}
or equivalently
\begin{equation*}
q_1(t) ( x,\xi ) 
= \int_0^t \rho_1 (\tau) ( \chi_{\tau - t} (x,\xi) ) \, \dd \tau. 
\end{equation*}

From \eqref{eq:rho1def} and 
Proposition \ref{prop:compsymbolflow} it follows that $q_1 (t) \in G^{- 2(1+\sigma),\sigma}$ uniformly for all $t \in [-T,T]$.
We differentiate \eqref{eq:q1def1} with respect to $t$ which gives if $(x,\xi) \in T^* \rr d \setminus 0$
\begin{align*}
\rho_1 (t) ( \chi_t (x,\xi) )
& = (\partial_t q_1(t)) ( \chi_t (x,\xi) ) +  \la \nabla q_1 (t) ( \chi_t (x,\xi) ), \partial_t \chi_{t} (x,\xi)  \ra \\
& = (\partial_t q_1(t)) ( \chi_t (x,\xi) ) - \{ q_1 (t), a_0 \} ( \chi_{t} (x,\xi) ). 
\end{align*}
Thus $\partial_t q_1(t) - \{ q_1 (t), a_0 \} = \rho_1 (t)$
which implies $\partial_t  q_1 (t)\in L^\infty( [-T,T], G^{- 2(1+\sigma),\sigma})$. 

Let $(x,\xi) \in T^* \rr d$ be fixed. We know that $[-T,T] \ni t \mapsto \rho_1 (t) (x,\xi)$ is continuous, 
and the continuity of 
\begin{align*}
[-T,T] \ni t \mapsto \{ q_1 (t), a_0 \} (x,\xi) = -  \la \nabla q_1 (t) (x,\xi), \J \nabla a_0 ( x,\xi ) \ra
\end{align*}
is a consequence of the continuity of
\begin{equation*}
[-T,T] \ni t \mapsto \partial_{z} q_1(t) ( x,\xi ) 
= \int_0^t \partial_{z} \left( \rho_1 (\tau) ( \chi_{\tau - t} (x,\xi) )  \right) \, \dd \tau
\end{equation*}
for $z = x_j$ and $z = \xi_j$ for all $1 \leqs j \leqs d$.  
In turn, the latter is a consequence of 
\begin{align*}
& \partial_{z} \left( q_1(t + s) - q_1(t) \right)( x,\xi ) \\
& = \int_t^{t+s} \partial_{z} \left( \rho_1 (\tau) ( \chi_{\tau - t - s} (x,\xi) )  \right) \, \dd \tau
+ \int_0^t \partial_{z} \Big( \rho_1 (\tau) ( \chi_{\tau - t - s} (x,\xi) ) - \rho_1 (\tau) ( \chi_{\tau - t } (x,\xi) )  \Big) \, \dd \tau,
\end{align*}
the chain rule, and again \eqref{eq:smoothflow}.

It follows that $[-T,T] \ni t \mapsto \partial_t  q_1 (t) (x,\xi)$ is continuous for each $(x,\xi) \in T^* \rr d$. 
Combining this with $\partial_t  q_1 (t)\in L^\infty( [-T,T], G^{- 2(1+\sigma),\sigma})$ we may conclude that $q_1(t) \in C( [-T,T], G^{- 2(1+\sigma),\sigma})$. 
Referring to \eqref{eq:remainderterm} this implies that $r_1 (t) \in L^\infty ( [-T,T], G^{- 2(1+\sigma),\sigma})$ and 
\begin{align*}
r_1(t) & = \{ q_1(t), a_0 \} + \{ q_0 (t), a_1 \} 
- \frac1{24} \{ q_0 (t),a_0 \}_{3}
- \partial_t q_1(t) \\
& = \{ q_1(t), a_0 \}  + \rho_1 (t) - \partial_t q_1(t)
= 0 
\end{align*}
which shows that the choice of $q_1(t)$ in \eqref{eq:q1def1} indeed eliminates $r_1(t) \in L^\infty( [-T,T], G^{- 2(1+\sigma),\sigma})$. 

In a similar way we construct $q_j(t) \in C( [-T,T], G^{- 2 j (1+\sigma),\sigma} )$ for $j \geqs 2$ using $\{ q_k (t)  \}_{k=0}^{j-1}$,
by defining  
\begin{equation}\label{eq:rhojdef}
\rho_j(t) = 
\sum_{k + n + m = j, \ k < j} \frac{(-1)^{m}}{(2m + 1)! 2^{ 2 m }} \, \{ q_k(t),a_n \}_{2 m +1} 
\in C( [-T,T], G^{- 2 j (1+\sigma) , \sigma} )
\end{equation}
and 
\begin{equation}\label{eq:qjdef}
q_j(t) ( \chi_t (x,\xi) ) 
= \int_0^t \rho_j (\tau) ( \chi_\tau (x,\xi) ) \, \dd \tau. 
\end{equation}
As before $\partial_t q_j(t) \in L^\infty( [-T,T], G^{- 2 j (1+\sigma) , \sigma})$, $q_j(t) \in C( [-T,T], G^{- 2 j (1+\sigma) , \sigma})$, and 
$\partial_t q_j(t) - \{ q_j (t), a_0 \} = \rho_j (t)$,
which yields $r_j (t) \in L^\infty( [-T,T], G^{- 2j (1+\sigma), \sigma})$ (cf. \eqref{eq:remainderterm})
and
\begin{align*}
r_j(t) & = 
\sum_{k + n + m = j} \frac{(-1)^{m}}{(2m + 1)! 2^{ 2 m }} \, \{ q_k(t),a_n \}_{2 m +1} 
- \partial_t q_j(t) \\
& = \{ q_j(t), a_0 \} + \rho_j(t)  - \partial_t q_j(t) 
= 0.
\end{align*}

So $r_j(t) \sim 0$ for all $j \geqs 0$ which means that 
\begin{equation*}
r(t) \in \bigcap_{j \geqs 0} L^\infty( [-T,T], G^{- 2j (1+\sigma),\sigma})
= L^\infty( [-T,T], \cS(\rr {2d})). 
\end{equation*}

Finally defining $q(t)$ by \eqref{eq:qasymp}, 
Lemma \ref{lem:pseudocalculusparameter} shows that \eqref{eq:qcont} and \eqref{eq:qderasymp} hold. 
The claim $q(0) = \psi_\delta q_0$ is a consequence of $q_0(0) = \psi_\delta q_0$
and $q_j (0) \equiv 0$ for $j \geqs 1$.
\end{proof}

Combining Corollaries \ref{cor:uniquesolutionMsigma} and \ref{cor:propagationSchwartz} with Proposition \ref{prop:commutator1} we obtain 
our main result about propagation of anisotropic Gabor singularities for the evolution equation

\begin{equation}\label{eq:anisoschrodeq3}
\left\{
\begin{array}{rl}
\partial_t u(t,x) + i a^w(x,D) u (t,x) & = 0, \quad x \in \rr d, \quad t \in [-T,T] \setminus 0, \\
u(0,\cdot) & = u_0. 
\end{array}
\right.
\end{equation}

\begin{thm}\label{thm:propagationWFs}
Let 
$k,m \in \no \setminus 0$, $\sigma = \frac{k}{m}$, 
and suppose that $a \in G^{1 + \sigma,\sigma}$, 
$a \sim \sum_{j = 0}^{\infty} a_j$, 
where 
$a_0 \in C^\infty(\rr {2d} \setminus 0)$ is real-valued, 
\begin{equation}\label{eq:a0homogeneity2}
a_0( \lambda x, \lambda^\sigma \xi)  = \lambda^{1+\sigma } a_0(x,\xi), \quad \lambda > 0, \quad (x,\xi) \in T^* \rr d \setminus 0,  
\end{equation}
and $a_j \in G^{(1+\sigma) (1-2 j ), \sigma}$ for $j \geqs 1$. 
The Hamiltonian flow $\chi_t: T^* \rr d \setminus 0 \to T^* \rr d \setminus 0$
corresponding to the Hamiltonian $a_0$ is then defined for $-T \leqs t \leqs T$ with $T > 0$.
If $u_0 \in \cS'(\rr d)$ then \eqref{eq:anisoschrodeq3} has a unique solution denoted $\cK_t u_0$, and
we have  
\begin{equation*}
\WFgs ( \cK_t u_0) = \chi_t \WFgs (u_0), \quad t \in [-T,T]. 
\end{equation*}
\end{thm}

\begin{proof}
By Proposition \ref{prop:homogeneoushamiltonian} there exists $T > 0$ such that 
the Hamiltonian flow $\chi_t: T^* \rr d \setminus 0 \to T^* \rr d \setminus 0$
corresponding to the Hamiltonian $a_0$ is well defined for $-T \leqs t \leqs T$.

By \eqref{eq:SSSchwartz} there exists $s \in \ro$ such that $u_0 \in M_s$. 
From Corollary \ref{cor:uniquesolutionMsigma} we obtain the existence of a unique solution 
$u(t) = \cK_t u_0 \in C( [-T,T], M_s)$ 
to \eqref{eq:anisoschrodeq3}. 

Let $z_0 \in T^* \rr d \setminus \left( \WFgs (u_0) \cup \{ 0 \} \right)$. 
We may assume that $|z_0| = 1$. 
By \eqref{eq:WFcharacteristic} with $m = 0$ there exists $q_0 \in G^{0,\sigma}$ such that $q_0^w u_0 \in \cS$
and $z_0 \notin \charac_\sigma(q_0)$. 
By 
\eqref{eq:noncharacteristic} 
there exists a $\sigma$-conic neighborhood $\Gamma \subseteq T^* \rr d \setminus 0$ 
such that $z_0 \in \Gamma$, and $|q_0(x,\xi) | \geqs C > 0$ when $(x,\xi) \in \Gamma \setminus \rB_r$ for some $r > 0$. 

Let $0 < \delta \leqs |\chi_t (z_0)|$ for all $t \in [-T,T]$. 
By Proposition \ref{prop:commutator1} there exists $q(t) \in C([-T,T], G^{0,\sigma})$ such that 
$q(t) \sim \sum_{j = 0}^{\infty} q_j (t)$ with $q_j (t) \in C([-T,T], G^{- 2j(1+\sigma),\sigma})$ for $j \geqs 0$, 
$q_0(t) (x,\xi) = \psi_\delta (x,\xi) q_0( \chi_{-t} (x,\xi))$
and $r(t) \in L^\infty([-T,T], \cS(\rr {2d}))$ where 
\begin{equation*}
r(t)^w = q(t)^w \left( \partial_t + i a^w \right)
- \left( \partial_t +i a^w \right) q(t)^w. 
\end{equation*}
This gives 
\begin{equation*}
0 = q(t)^w \left( \partial_t + i a^w \right) u(t) 
= \left( \partial_t +i a^w \right) q(t)^w u(t) + r(t)^w u(t)
\end{equation*}
that is 
\begin{equation*}
\left( \partial_t +i a^w \right) q(t)^w u(t)  = - r(t)^w u(t). 
\end{equation*}
Set $f (t) = - r(t)^w u(t)$. 
By \eqref{eq:symbolintersection} and Proposition \ref{prop:continuitySobolev} we have for any $m \in \ro$
\begin{equation*}
\sup_{|t| \leqs T} \| f(t) \|_{M_{m + s}} 
\lesssim \sup_{|t| \leqs T} \| u(t) \|_{M_{s}} < \infty
\end{equation*}
which by \eqref{eq:SSSchwartz} implies that $f \in L^\infty ( [-T,T], \cS(\rr d)) \subseteq L^1 ( [-T,T], \cS(\rr d))$. 

Thus $q(t)^w u(t)$ solves the equation \eqref{eq:anisoschrodeq2}, 
and for the initial value we have
\begin{equation*}
q(0)^w u(0) 
= q_0(0)^w u(0) = \left( \psi_\delta q_0 \right)^w u_0 
= q_0^w u_0 + \left( (\psi_\delta-1) q_0 \right)^w u_0  \in \cS
\end{equation*}
due to $\psi_\delta-1 \in C_c^\infty(\rr {2d})$. 
At this point we may apply 
Corollary \ref{cor:propagationSchwartz} which gives 
$q(t)^w u(t) \in \cS(\rr d)$ for all $t \in [-T,T]$. 
We note that $q_0 (t) (\chi_t(x,\xi)) = q_0(x,\xi)$ if $|\chi_t(x,\xi)| \geqs \delta$, 
and $\chi_t \Gamma \subseteq T^* \rr d \setminus 0$ is a $\sigma$-conic neighborhood of $\chi_t(z_0)$, 
which is a consequence of Proposition \ref{prop:homogeneoushamiltonian}. 
This implies that $\chi_t(z_0) \notin \charac_\sigma ( q(t) )$, since the lower order terms $\{ q_j(t) \}_{j \geqs 1}$ in $q(t)$ decay on $T^* \rr d$. 
By 
\eqref{eq:WFcharacteristic} this means that $\chi_t(z_0) \notin \WFgs ( u(t) ) = \WFgs ( \cK_t u_0 ) $. 
We have shown 
\begin{equation*}
\WFgs ( \cK_t u_0) \subseteq \chi_t \WFgs (u_0), \quad t \in [-T,T]. 
\end{equation*}
The opposite inclusion follows from $\cK_t^{-1} =  \cK_{-t}$ and $\chi_t^{-1} =  \chi_{-t}$. 
\end{proof}

\begin{rem}\label{rem:comparisonorder}
In Theorem \ref{thm:propagationAiry1} the Hamiltonian has by Remark \ref{rem:comparison1} the form
\begin{equation*}
a = \sum_{j=0}^m a_j
\end{equation*}
where $a_j$ is real-valued for all $0 \leqs j \leqs m$, $\sigma = \frac1{m-1}$, 
$a_0 \in G^{1+\sigma,\sigma}$ satisfies \eqref{eq:a0homogeneity2}, and $a_j \in G^{\sigma(m-j),\sigma} = G^{(1+\sigma) \left( 1 - \frac{j}{m} \right),\sigma} \subseteq G^{1,\sigma}$ for $1 \leqs j \leqs m$.

In Theorem \ref{thm:propagationWFs} on the other hand $\sigma = \frac{k}{m}$ and
the Hamiltonian is 
$a \sim \sum_{j=0}^\infty a_j$, $a_0$ is again real-valued and satisfies \eqref{eq:a0homogeneity2}, 
and $a_j \in G^{(1+\sigma) (1-2 j ), \sigma} \subseteq G^{-(1+\sigma), \sigma}$ for $j \geqs 1$. 

Comparing Theorem \ref{thm:propagationAiry1} and Theorem \ref{thm:propagationWFs}
we may conclude that the former is not a particular case of the latter, 
due to the different assumptions on the perturbation $a - a_0$ of the Hamiltonian.   
\end{rem}

\section{Examples}\label{sec:examples}

Let again $\psi_\delta(x,\xi) = \fy(|x|^2 + |\xi|^2) \in C^\infty(\rr {2d})$ where $\fy \in C^{\infty}(\ro)$,
$0 \leqs \fy \leqs 1$, $\fy(t) = 0$ for $t \leqs \frac{\delta^2}{4}$ and $\fy(t) = 1$ for $t \geqs \delta^2$
for a given $\delta > 0$. 
Thus $\psi_\delta \big|_{\rB_{\frac{\delta}{2}}} \equiv 0$ and $\psi_\delta \big|_{\rr {2d} \setminus \rB_\delta} \equiv 1$. 

\begin{example}\label{ex:example1}
Let $\delta > 0$, $c \in \ro \setminus 0$, 
$k,m \in \no \setminus 0$, $\sigma = \frac{k}{m}$, and set 
\begin{equation*}
a(x,\xi) = c \psi_\delta (x,\xi) \left( |x|^{2k} + |\xi|^{2m} \right)^{\frac12 \left( \frac1k + \frac1m \right)}. 
\end{equation*}
Then
\begin{equation*}
a( \lambda x, \lambda^\sigma \xi) = \lambda^{1+\sigma} a(x,\xi), \quad \lambda \geqs 1, \quad (x,\xi) \in T^* \rr d, \quad | (x,\xi) | \geqs \delta, 
\end{equation*}
and $a \in G^{1+\sigma,\sigma}$. 
Theorem \ref{thm:propagationWFs} applies to this Hamiltonian. 
\end{example}

\begin{example}\label{ex:example2}
Let $c_1, c_2 \in \ro \setminus 0$,  $k \in \no \setminus 0$, and set
\begin{equation*}
a(x,\xi) = \psi_\delta (x,\xi) \left( c_1 |x|^{\frac{2k}{2k-1}} + c_2 |\xi|^{2 k} \right). 
\end{equation*}
With $\sigma = \frac1{2k-1}$ we have 
\begin{equation*}
a( \lambda x, \lambda^\sigma \xi) = \lambda^{1+\sigma} a(x,\xi), \quad \lambda \geqs 1, \quad (x,\xi) \in T^* \rr d, \quad | (x,\xi) | \geqs \delta. 
\end{equation*}

However we note that the singularity (non-smoothness) of the term $|x|^{\frac{2k}{2k-1}} = |x|^{1 + \sigma}$ at the origin is not annihilated by the cutoff function $\psi_\delta$ unless $k = 1$. 
For this purpose we would need a cutoff function that depends on $x$ only. 
But this type of cutoff function does not fit into the calculus with $G^{m,\sigma}$ symbols. 
So $a \notin G^{1+\sigma,\sigma}$ and we cannot apply Theorem \ref{thm:propagationWFs} to this Hamiltonian. 
\end{example}

\section*{Acknowledgments}
We thank the anonymous referee whose careful comments have spurred us to improve the manuscript. 
The first author is partially supported by the INDAM-GNAMPA project CUP E53C23001670001.
This work is partially supported by the MIUR project ``Dipartimenti di Eccellenza 2018-2022'' (CUP E11G18000350001).







\begin{thebibliography}{2000}

%
\bibitem{Arnold1}
V.~I.~Arnold, \textit{Mathematical Methods of Classical Mechanics}, 
Springer-Verlag, New York Heidelberg Berlin, 1978. 
%
\bibitem{Boggiatto1}
P.~Boggiatto, E.~Buzano and L.~Rodino, \textit{Global Hypoellipticity and Spectral Theory}, 
Akademie Verlag Mathematical Research \textbf{92}, Berlin, 1996. 
%
\bibitem{Cappiello2}
M.~Cappiello, T.~Gramchev and L.~Rodino, \textit{Entire extensions and exponential decay for semilinear elliptic equations}, 
J. Anal. Math. \textbf{111} (2010), 339--367.
%
\bibitem{Cappiello3}
M.~Cappiello, R.~Schulz and P.~Wahlberg, \textit{Conormal distributions in the Shubin calculus of pseudodifferential operators}, 
J. Math. Phys. \textbf{59} (2) 021502 (2018).
%
\bibitem{Cappiello4} 
M.~Cappiello, T.~Gramchev, S.~Pilipovi\'c and L.~Rodino, \textit{Anisotropic Shubin operators and eigenfunction expansions in Gelfand--Shilov spaces}, 
J. Anal. Math. \textbf{138} (2019), 857--870.
%
\bibitem{Cappiello5}
M.~Cappiello, R.~Schulz and P.~Wahlberg, \textit{Shubin type Fourier integral operators and evolution equations}, 
J. Pseudo-Differ. Oper. Appl. \textbf{11} (1) (2020) 119--139.
%
\bibitem{Chatzakou1}
M.~Chatzakou, J.~Delgado and M.~Ruzhansky, \textit{On a class of anharmonic oscillators}, 
J. Math. Pures Appl. \textbf{153} (2021) 1--29.
%
\bibitem{deGosson1}
M.~A.~de~Gosson, \textit{Symplectic Methods in Harmonic Analysis and in Mathematical Physics}, 
Birkh\"auser, Basel, 2011. 
%
\bibitem{Diestel1}
J.~Diestel and J.~J.~Uhl, Jr., \textit{Vector Measures}, Mathematical Surveys and Monographs \textbf{15} AMS, Providence, Rhode Island, 1977.
%
\bibitem{Folland1}
G.~B.~Folland, \textit{Harmonic Analysis in Phase Space}, Princeton University Press, 1989.
%
\bibitem{Garello1}
G.~Garello and A.~Morando, \textit{$m$-Microlocal elliptic pseudodifferential operators acting on $L_{\rm loc}^p (\Omega)$}, 
Math. Nachr. \textbf{289} (2016), 1820--1837. 
%
\bibitem{Gelfand2} 
I.~M.~Gel'fand and G.~E.~Shilov, 
\emph{Generalized Functions}, Vol. II,
Academic Press, New York London, 1968.
%
\bibitem{Grochenig1}
K.~Gr\" ochenig, \textit{Foundations of Time-Frequency Analysis}, Birkh\" auser, Boston, 2001.
%
\bibitem{Grochenig2}
K.~Gr\" ochenig and J.~Toft, \textit{Isomorphism properties of Toeplitz operators and pseudo-differential operators between modulation spaces}, 
J. Anal. Math. \textbf{114} (2011), 255--283.
%
\bibitem{Hartman1}
P.~Hartman, \textit{Ordinary Differential Equations}, Birkh\" auser, Boston Basel Stuttgart, 1982.
%
\bibitem{Hormander1}
L.~H\"ormander,
\textit{The Analysis of Linear Partial Differential Operators}, Vol. I, III, IV,
Springer, Berlin, 1990.
%
\bibitem{Hormander2}
L.~H\"ormander, \textit{Quadratic hyperbolic operators}, Microlocal Analysis and Applications, Lecture Notes in Math. \textbf{1495}, Eds. L. Cattabriga, L. Rodino, pp. 118--160, Springer, 1991.
%
\bibitem{Lascar1} 
R.~Lascar, \textit{Propagation des singularit\'es d'\'equations pseudodiff\'erentielles quasi homog\`enes}, 
Ann. Inst. Fourier Grenoble \textbf{27} (1977), 79--123. 
%
\bibitem{Martin1}
J.~Martin, \textit{Spectral inequalities for anisotropic Shubin operators}, 
arXiv:2205.11868 [math.AP] (2022). 
%
\bibitem{Nicola1}
F.~Nicola and L.~Rodino, \textit{Global Pseudo-Differential Calculus on Euclidean Spaces}, Birkh\"auser, Basel, 2010.
%
\bibitem{Nicola2}
F.~Nicola and L.~Rodino, \textit{Propagation of Gabor singularities for semilinear Schr\"odinger equations}, 
Nonlinear Differential Equations Appl. \textbf{22} (6) (2015), 1715--1732. 
%
\bibitem{Parenti1}
C.~Parenti and L.~Rodino, \textit{Parametrices for a class of pseudo differential operators I,II}, 
Ann. Mat. Pura Appl. \textbf{125} (4) (1980), 221--254 and 255--278. 
%
\bibitem{Petersson1}
A.~Petersson, \textit{Fourier characterizations and non-triviality of Gelfand-Shilov spaces, with applications to Toeplitz operators}, 
J. Fourier Anal. Appl. \textbf{29} (3) 29 (2023).
%
\bibitem{PravdaStarov1}
K.~Pravda-Starov, L.~Rodino and P.~Wahlberg, \textit{Propagation of Gabor singularities for Schr\"odinger equations with quadratic Hamiltonians}, Math. Nachr. \textbf{291} (1) (2018), 128--159. 
%
\bibitem{Rodino2}
L.~Rodino and P.~Wahlberg, \textit{The Gabor wave front set}, Monaths. Math. \textbf{173} (4) (2014), 625--655. 
%
\bibitem{Rodino3}
L.~Rodino and P.~Wahlberg, \textit{Microlocal analysis of Gelfand--Shilov spaces}, 
Ann. Mat. Pura Appl. \textbf{202} (5) (2023), 2379--2420. 
%
\bibitem{Rodino4}
L.~Rodino and P.~Wahlberg, \textit{Anisotropic global microlocal analysis for tempered distributions}, 
Monatsh. Math. \textbf{202} (2) (2023), 397--434. 
%
\bibitem{Shubin1}
M.~A.~Shubin, \textit{Pseudodifferential Operators and Spectral Theory}, Springer, Berlin Heidelberg, 2001.
%
\bibitem{Schulz1}
R.~Schulz and P.~Wahlberg, \textit{Equality of the homogeneous and the Gabor wave front set}, 
Comm. Partial Differential Equations \textbf{42} (5) (2017), 703--730.
%
\bibitem{Toft1}
J.~Toft, \textit{The Bargmann transform on modulation and Gelfand--Shilov spaces, with applications to Toeplitz and pseudo-differential operators}, 
J. Pseudo-Differ. Oper. Appl. \textbf{3} (2) (2012), 145--227. 
%
\bibitem{Wahlberg3}
P.~Wahlberg, \textit{Propagation of anisotropic Gelfand--Shilov wave front sets}, 
J. Pseudo-Differ. Oper. Appl. \textbf{14} (1) 7 (2023). 
%
\bibitem{Wahlberg4}
P.~Wahlberg, \textit{Propagation of anisotropic Gabor wave front sets}, 
arXiv:2301.03190 [math.AP] (2022). 
%
\end{thebibliography}
\end{document}